\documentclass{amsart}
\usepackage{bbm}
\usepackage{amsmath,amsthm,amsfonts,amssymb,amscd,mathrsfs}
\usepackage{psfrag,graphicx}

\usepackage{epic,eepic}
\usepackage{color}
\usepackage{ebezier}




\theoremstyle{plain}
\newtheorem{main}{Theorem}

\newtheorem{maincor}[main]{Corollary}
\newtheorem{Thm}{Theorem}[section]
\newtheorem{Lem}[Thm]{Lemma}
\newtheorem{Prop}[Thm]{Proposition}
\newtheorem{Cor}[Thm]{Corollary}

\theoremstyle{remark}

\newtheorem{Rem}[Thm] {Remark}

\newtheorem*{Que}{Question}

\long\def\begcom#1\endcom{}

\newcommand{\Vol}{\operatorname{vol}}

\newcommand{\diam}{\operatorname{diam}}

\newcommand{\length}{\operatorname{\length}}

\newcommand{\Diff}{\operatorname{Diff}}

\def\Diff{\operatorname{Diff}}

\def\id{\operatorname{id}}
\def\length{\operatorname{length}}
\def\loc{\operatorname{loc}}

\def\spec{\operatorname{sp}}

\def\Int{\operatorname{Int}}

\def\NN{\mathbb{N}}

\def\cF{{\mathcal F}}
\def\cM{{\mathcal M}}

\def\vep{\varepsilon}

\def\NN{{\mathbb N}}

\def\Diff{\operatorname{Diff}}

\def\id{\operatorname{id}}
\def\length{\operatorname{length}}
\def\loc{\operatorname{loc}}

\def\spec{\operatorname{sp}}

\def\NN{\mathbb{N}}

\def\cF{{\mathcal F}}
\def\cM{{\mathcal M}}

\def\vep{\varepsilon}

\def\NN{{\mathbb N}}

\begin{document}

\title[Asymptotic $h$-expansiveness rate of $C^\infty$ maps  ]
      {Asymptotic $h$-expansiveness rate of $C^\infty$ maps}

\author[D.Burguet, G.Liao, J. Yang]
{ David Burguet$^{1}$, Gang Liao$^{2}$,  and  Jiagang Yang$^{3}$}

\email{david.burguet@upmc.fr}

\email{liaogang@math.pku.edu.cn}

\email{yangjg@impa.br}

\thanks{2000 {\it Mathematics Subject Classification}.   37A35, 37F10, 41A46, 30D05}

\keywords{$C^{\infty}$ maps;  tail  entropy;  asymptotic entropy
expansiveness }

\thanks{$^{1}$ LPMA - Universit Paris 6, 4, Place Jussieu, 75252 Paris Cedex 05 France.}

\thanks{$^{2}$ School of Mathematical
Sciences,  Peking University, Beijing 100871, China. GL would like
to thank the kind hospitality of IMPA and the support of
CNPq-Brazil.}

\thanks{$^3$
Departamento de Geometria, Instituto de Matem\'atica e
Estat\'istica, Universidade Federal Fluminense, Niter\'oi, Brazil.
JY is partially supported by CNPq and FAPERJ}

\date{April, 2014}

 \maketitle



\begin{abstract}We study the rate of convergence to zero of the tail entropy of $C^\infty$ maps.
 We give an upper bound of this rate in terms of the growth in $k$ of the derivative of order $k$ and give  examples showing
the optimality of the established rate of convergence. We also
consider the case of multimodal maps of the interval. Finally we
prove that homoclinic tangencies give rise to $C^r$ $(r\geq 2)$
robustly non $h$-expansive dynamical systems.
\end{abstract}

\tableofcontents

\section{Introduction}
\emph{Topological Entropy.}
A dynamical system $(f,M)$ is defined by a continuous map $f: M\to
M$  on a compact topological space $M$.  The topological entropy $h(f)$ of $(f,M)$
 introduced by Adler, Konheim and McAndrew \cite{AKM} estimates the dynamical complexity of the system by counting
 the exponential growth rate of distinguishable orbits at arbitrarily small scales.
 The topological entropy is a topological invariant, i.e. it is invariant under topological conjugacy.
 In this pioneer work \cite{AKM} the authors use finer and finer open covers as the decreasing scale to define the topological
 entropy.
 Later Bowen \cite{Bow71} gave an equivalent definition for metric spaces  $M$ with distance $d$ (in the present paper we will only consider
  $C^\infty$ smooth manifolds $M$ endowed with a Riemannian metric). Let us recall Bowen's definition.

   For any subset $\Lambda\subset
M$,  $\vep>0$ and  $n\in \mathbb{N}$, a subset $K\subset
 M$ is said $(n,\vep)$-spanning $\Lambda$ if for any $x\in
 \Lambda$ there exists $y\in K$ such that  $d(f^ix,f^iy)<
 \vep$ for $i=0,1,...,n-1$. Let
 $r_n(f,\Lambda,\varepsilon)$ denote the smallest cardinality of any
$(n,\vep)$-spanning set of $\Lambda$. The $\vep$-topological entropy
of $\Lambda$ is defined by
$$h_d(f,\Lambda,\varepsilon)=\limsup_{n\rightarrow\infty}\frac{1}{n}\log
r_n(f,\Lambda,\varepsilon).$$ Letting $\vep\to 0$, define the
topological entropy of $f$ on $\Lambda$ by
$$h_d(f,\Lambda)=\lim_{\varepsilon\rightarrow0}h_d(f,\Lambda,\varepsilon).$$
Denote $h_d(f,\varepsilon)=h_d(f,M,\varepsilon)$ and
$h_d(f)=h_d(f,M)$. By an easy
argument of compactness one then can prove $h_d(f)$ is equal to the
topological entropy as defined in \cite{AKM} by using open covers.
In particular $h_d(f)=h(f)$ does not depend on the metric $d$.
 However this is not  the case of $h_d(f,\vep)$. If $d_1$ and $d_2$ are two equivalent metrics
 then there exists $C>1$ such that $h_{d_2}(f,C\vep)\leq h_{d_1}(f,\vep)\leq h_{d_2}(f,C^{-1}\vep)$ for all $\vep>0$.
In the present paper we  endow compact smooth manifolds with  Riemannian metrics.
As such metrics are equivalent, the $\vep$-entropy of $f$ is well defined up to some constant $C>1$ as above. From now the distance $d$ on $M$ is fixed and we forget the index $d$ in the above definitions.\\

\emph{Tail entropy and $h$-expansiveness.} The tail entropy $h^*(f)$
of a topological system $(f, M)$ first
 appeared in \cite{Mis76} (initially Misiurewicz called it topological conditional entropy).
 It is the entropy remaining at arbitrarily small scales. The tail entropy bounds the default
  of upper semi-continuity of the entropy of invariant Borel probability measures (see \cite{Walter}
  for the entropy of invariant measures). This property established in \cite{Mis76} is certainly
  the main motivation to consider this quantity.  As for the topological entropy Bowen gave a
  definition of the tail entropy for metric spaces replacing iterated open covers by dynamical balls in the definition of Misiurewicz.
  We present two equivalent definitions.

 Given $x\in M$, $n\in \NN$,
denote the $n$-step dynamical ball $B_{n}(f, x,\varepsilon)$
consisting of all such points $y\in M$ that
$$d(f^iy,\,f^ix)<\varepsilon,~~i=0, 1,\cdots,n-1.$$
Define the  upper $\vep$-tail entropy $\overline{h}^*(f,\vep)$ as
follows :

$$\overline{h}^*(f,\vep)=\lim_{\delta\to 0}\limsup_{n\rightarrow\infty}\frac{1}{n}\log
\sup_{x\in M}r_n(f,B_n(f,x,\vep),\delta).$$ The lower  $\vep$-tail
entropy $\underline{h}^*(f,\vep)$ is the maximal entropy of
infinite dynamical balls. More precisely  let
$B_{\infty}(f,x,\vep)=\bigcap_{n\in \mathbb{N}}B_{n}(f,
x,\varepsilon)$. Define the lower $\vep$-tail entropy as follows :
$$\underline{h}^{*}(f,\varepsilon)=\sup_{x\in M}h(f,\, B_{\infty}(f,x,\varepsilon)).$$

One easily finds that $\underline{h}^{*}(f,\varepsilon)\leq
\overline{h}^{*}(f,\varepsilon)$ for all $\vep>0$. Moreover by an
argument of compactness Bowen (Proposition 2.2 of \cite{Bow72b}) has
shown that for all $\vep>0$ we have in fact the equality
 $\overline{h}^{*}(f,\vep)=
\underline{h}^{*}(f,\varepsilon)$  and we denote from now on this
quantity by $h^*(f,\vep)$. Also we can define the tail entropy
$h^*(f)$ as follows :
$$h^*(f)=\lim_{\vep\to 0}h^*(f,\vep).$$

Like the topological entropy,  the tail entropy is a topological
invariant and thus $h^*(f)$ does not depend on the metric $d$,  however $h^*(f,\vep)$
 may depend on $d$. But as already noted this is not important in our smooth setting up to
 rescale balls by a uniform constant.

The dynamical system $(f,M)$ is called entropy expansive
($h$-expansive) when there exists $\vep>0$ such that
$h^{*}(f,\vep)=0$ and asymptotically  entropy expansive
(asymptotically $h$-expansive) when $h^{*}(f)=0.$ As noticed above
the measure theoretical entropy  is upper semi-continuous for
asymptotically $h$-expansive maps and therefore such maps always
admit an invariant measure of maximal entropy.

The notion of $\varepsilon$-tail entropy is broadly used in the calculation of
 entropy, since  by Theorem 2.4 of \cite{Bow72b} it bounds the difference of $\vep$-entropy
and the whole entropy\footnote{However this inequality is in general quite rough and both members may have a different order of magnitude, even for asymptoically $h$-expansive systems. See Proposition \ref{rrrem}.}:
\begin{eqnarray}\label{rer}
|h(f)-h(f,\vep)|\leq h^{*}(f,\vep).\end{eqnarray} For any
$f$-invariant Borel probability measure $\mu $ and for any finite
Borel partition $P$ with diameter less than $\vep$ we have also
\begin{eqnarray}\label{rerr}
|h(\mu)-h(\mu,P)|\leq h^{*}(f,\vep).\end{eqnarray}

We present now another notion introduced by Newhouse in \cite{New89}
as the $\vep$-local entropy. We first define a notion of local
entropy for invariant measures. Let $\mu$ be an $f$-invariant probability
measure and let $\vep>0$ we put

$$h_{\loc}(\mu,\vep):=\lim_{1\neq \sigma \rightarrow 1}\inf_{F,\ \mu(F)\geq \sigma}\lim_{\delta\to 0}\limsup_{n\rightarrow\infty}\frac{1}{n}\log
\sup_{x\in F}r_n(f,F\cap B_n(f,x,\vep),\delta).$$

In \cite{burfis} it is shown like for the  tail
entropy that $h_{\loc}(\mu,\vep)$ may be written by using infinite dynamical balls as follows
$$h_{\loc}(\mu,\vep)=\lim_{1\neq \sigma \rightarrow 1}\inf_{F,\ \mu(F)\geq \sigma}\sup_{x\in F} h(f,F\cap B_{\infty}(f,x,\vep)).$$
Finally we let $h_{\loc}(f,\vep):=\sup_\mu h_{\loc}(\mu,\vep)$ be
the $\vep$-local entropy of $f$. Clearly we have $h_{\loc}(f,\vep)
\leq h^*(f,\vep)$. Moreover by the so called tail variational
principle proved in \cite{Dows}, $\lim_{\vep\rightarrow 0}
h_{\loc}(f,\vep)=h^*(f)$. However we do not know if
$h_{\loc}(f,\vep)=h^*(f,\vep)$ (or even  $\geq h^*(f,\vep/10)$) for
any $\vep>0$. Newhouse proved ( Theorem 1.2 of \cite{New89}) that
$h_{\loc}(f,\vep)$ also satisfies Inequality (\ref{rer}) and the
following estimate for the entropy of measures finer than Inequality
(\ref{rerr}):

$$|h(\mu)-h(\mu,P)|\leq h_{\loc}(\mu,\vep).$$

The $\vep$-local entropy is defined through invariant measures and we do not know if it can be expressed in a topological way.
 Conversely we ignore any satisfactory measure quantity $h^*(\mu,\vep)$ such that a variational
  principle $h^*(f,\vep)=\sup_\mu h^*(\mu,\vep)$ holds and  such that  $\left(h^*(.,\vep)\right)_\vep$
  defines an entropy structure (See \cite{Dows} for the theory of entropy structures).\\

\emph{Local volume growth.}
We introduce now the local volume growth which is closely related with the local entropy. We assume here that $f$ is $C^r$ with $r\geq 1$.

A $C^r$ map $\sigma$ from an open set $U\supset [0,1]^k$ of
$\mathbb{R}^k$ to $M$, which is a diffeomorphism onto its image, is
called a  $k$-disk. For any $k$-disk $\sigma$ and for any Borel subset $E$ of $[0,1]^k$
 we  denote by $\left|\sigma|_E\right|$ the $k$-volume of $\sigma$ on $E$, i.e. $\left|\sigma|_E\right|=\int_{E}\|\Lambda^kD_t\sigma\|_kd\lambda(t)$
 where $d\lambda$ is the Lebesgue measure on $[0,1]^k$. Then for any $\vep>0$ we define
 the $\vep$-local  $k$-volume growth $v_k^*(f,\vep)$ of $f$  as  follows :
$$v_k^*(f,\vep)=\sup_{\sigma, \ k\mathrm{-disk}}\limsup_{n\to\infty}\frac{1}{n}\sup_{x\in M}\log\left| f^{n-1}\circ
\sigma|_{ \sigma^{-1}\left(B_n(f,x,\vep)\right)}\right|.$$  By using
Pesin theory Newhouse \cite{New89}  proved that for $C^{1+\alpha}$
dynamical systems,
\begin{eqnarray*}
h_{\loc}(f,\vep)&\leq &\max_k v_k^*(f,2\vep)
\end{eqnarray*}
 where $k$ takes over all numbers not more than the center unstable  dimensions of ergodic invariant measures with positive entropy.
 For  surface diffeomorphisms with nonzero topological entropy the only possible
 value of $k$ is one by Ruelle inequality \cite{Ruelle}, i.e. in this case we have
\begin{eqnarray}\label{surf} h_{\loc}(f,\vep)&\leq &
v^*_1(f,2\vep).
\end{eqnarray}
Finally let us define the local $k$-volume growth $v^*_k(f)$ of $f$
for any $1\leq k\leq \dim(M)$ as
$$v^*_k(f)=\lim_{\vep \to 0}v^*_k(f,\vep).$$\\

\emph{Yomdin's entropy theory of $C^r$ smooth maps.}
 In \cite{Yom87} Yomdin introduced semi-algebraic tools to study the local complexity of smooth maps
and proved in this way Shub's entropy conjecture for $C^\infty$
maps. This famous conjecture \cite{Shub} states that the topological
entropy $h(f)$ has always
   the logarithm of the spectral radius $\spec(f)$ in homology   as a lower bound
  for differentiable maps.  It follows from the inequality
  $ \log \spec(f)\leq h(f)+ \max_{1\leq k\leq \dim(M)}v^*_k(f)$ together with  the following estimate on the local volume
  growth established in \cite{Yom87} for any $C^r$ map $f$:
\begin{eqnarray}
\label{ r upper bound }v^*_k(f)&\leq &\frac{kR(f)}{r},
\end{eqnarray}
with  $R(f)=\lim_n\frac{1}{n}\log^+ \sup_{x\in
M}\|D_xf^n\|$.
 Loosely speaking, the larger
the differential order is, the more regular the dynamical complexity
is. Using the estimate (\ref{ r upper bound }), Yomdin \cite{Yom87}
and Newhouse \cite{New89} in the setting of $C^{\infty}$ maps showed
entropies in both topological and measure theoretic sense are upper
semi-continuous.

Later  Buzzi \cite{Buz97} further observed that  Yomdin's work in
fact implied directly (without refering to local volume growth) that
\begin{eqnarray}\label{buzz}h^*(f)\leq \frac{\dim(M)R(f)}{r}.\end{eqnarray}
Consequently,  all $C^\infty$  maps are asymptotically $h$-expansive.\\

\emph{Misiurewicz-like examples.} In the early seventies Misiurewicz
\cite{Mis73}  produced $C^r$ diffeomorphisms $f_r$ without any
measure of  maximal entropy for any finite $r$, in particular
$h^*(f_r)\neq 0$. In fact in this example one can compute
$$h^*(f_r)\geq \frac{R(f)}{r}$$ (it corresponds to the converse
inequality of (\ref{buzz}) up to the factor $\dim(M)$). The main
idea consists in accumulating smaller and smaller horseshoes at a
periodic point which admits a homoclinic tangency. Later Buzzi
 \cite{Buz97} built in the same spirit a $C^r$
interval map with $h^*(f)= \frac{R(f)}{r}$ and then by considering
the product of such systems one can see that inequality (\ref{buzz})
 is sharp for noninvertible maps.  See also \cite {DN05}, \cite{Buz13} for related recent works.\\

\emph{Rate of convergence of the tail entropy for $C^\infty$
systems.} As stated above $C^\infty$ systems are asymptotically
$h$-expansive, i.e. $\lim_{\vep\to 0}h^*(f,\vep)=0$. This paper is
devoted to the study of the rate of convergence in the previous
limit. This was first investigated by Yomdin in \cite{Yom91} for
 analytic surface  diffeomorphisms. He proved by using ``analytic unit
reparametrization of semi-algebraic sets" via Bernstein inequalities
that $$h_{\loc}(f,\vep)\leq h^*(f,\vep)\leq C(f)\frac{\log|\log
\vep|}{|\log\vep|}$$  for any $\vep>0$ and for some constant $C(f)$
depending only on $f$. More recently Liao \cite{Liao} proved that
for any compact analytic manifold $M$, there exists  a universal
function $a:\mathbb{R}^+\to\mathbb{R}^+$ with $\lim_{\vep\to 0}
a(\vep)=0$ such that for any analytic map $f$ on $M$,  the
$\vep$-tail entropy satisfies
$$h^*(f,\vep)\leq C(f)\,a(\vep)$$
for some constant $C(f)$, independent of $\vep$.
 Here we investigate the case of general $C^\infty$ maps and give an explicit rate of convergence
in terms of the growth in $k$ of the supremum norms of the
derivatives of order $k$ by using ``$C^k$ unit reparametrizations of
semi-algebraic set" as in the proof of the entropy conjecture by
Yomdin \cite{Yom87}.  In the same spirit of the previously mentioned
sharp $C^r$ examples we will then produce various examples, proving
optimality of the established rate of convergence. We precise
moreover as in \cite{Yom91} the modulus of upper semicontinuity of
the topological entropy for some dynamical systems.\\

\emph{$h$-expansiveness and homoclinic tangencies.} Hyperbolic
systems are known to be expansive and therefore $h$-expansive. In
fact they are robustly ($h$-)expansive for the $C^1$ topology, i.e.
for any hyperbolic system  $f$ there exists $\vep>0$ and a $C^1$
neighborhood $\mathcal{U}$ of $f$ such that $h^*(g,\vep)=0$ for any
$g \in U$.  Note that any $C^1$ robustly expansive diffeomorphism is
Axiom A  as shown by  Ma\~n\'e \cite{Mane}.
 For interval maps hyperbolicity is $C^r$ open and dense for any $r$ \cite{Koz}
 but the celebrated Newhouse phenomenon claims this is no more the case for diffeomorphisms
 in higher dimensions \cite{New79}.   In \cite{LVY} Liao, Viana and Yang proved that any diffeomorphism $C^1$
 far from homoclinic tangencies is $C^1$-robustly $h$-expansive.
 In Theorem \ref{homone} and Theorem \ref{non EE}
 we prove that $C^2$ interval maps and diffeomorphisms in higher dimensions with a non degenerate homoclinic tangency
   are not $C^2$-robuslty $h$-expansive, which gives somehow a reverse to the previous result of \cite{LVY}. \\

To resume non $h$-expansiveness of  smooth map is produced
by homoclinic tangencies while the rate of convergence of the $\vep$-tail entropy
is related with the growth of higher derivatives.

\section{Statements of results}\label{state}

\subsection{Explicit rate for ultradifferentiable maps}
\subsubsection{Ultradifferentiable maps}
An arbitrary sequence of positive real numbers $\cM=
(M_k)_{k\in\NN}$ with $M_0\geq 1$  will always be called a weight
sequence. A quite usual  condition on the weight is the logarithmic convexity.
A weight sequence $(M_k)_k$ is called logarithmic convex if for all
$k\in \NN\setminus\{0\}$ we have $$2\log M_k \leq \log M_{k+1} + \log M_{k-1}.$$
 We will use two important properties of logarithmic convex weights :
\begin{itemize}
\item $\left(\left(M_k/M_0\right)^{1/k}\right)_k$ is nondecreasing;\\
\item $M_kM_l\leq M_0M_{k+l}$ for all $k,l\in\NN$.
\end{itemize}
A  weight  $\mathcal{M}=(M_k)_k$ is called  superexponential when  $(M_k)_k$ satisfies $$\liminf_k\frac{\log\left(M_k/M_0\right)}{k}=+\infty.$$
Note that logarithmic convex weights are quite general: if a weight
$(M_k)_k$ is superexponential then there exists a
logarithmic convex weight $(M'_k)_k$ with $M'_k\leq M_k$ and
$M_l=M'_l$ for infinitely many $l\in \NN$.\\

 Let $O\subset\mathbb{R}^s$ be an open set and let
$\cM=(M_k)_k$ be  a weight sequence,  we define the set
$U^\cM(O,\mathbb{R}^t)$  and its subset $V^\cM(O,\mathbb{R}^t)$ of respectively $U$- and $V$-ultradifferentiable maps
with respect to $\cM$ as follows
$$U^\cM(O,\mathbb{R}^t)=\Big{\{}f=(f_1,\cdots,f_t)\in C^\infty(O,\mathbb{R}^t): \ \exists h>0,\,\, s.t.\,\,  \max_{i=1,\cdots,t}\sup_{r\in\NN}\frac{\|D^rf_i\|_\infty}{h^rM_r}<\infty\Big{\}},$$
and
$$V^\cM(O,\mathbb{R}^t)=\Big{\{}f=(f_1,\cdots,f_t)\in C^\infty(O,\mathbb{R}^t): \ \max_{i=1,\cdots,t}\|D^rf_i\|_\infty\leq M_r\Big{\}}.$$

The previous setting of ultradifferentiable maps is well adapted to
the characterization of quasi-analytic maps. An ultradifferentiable
class  $U/V^\cM$   is said quasi-analytic if there is no
nontrivial function in  $U/V^\cM$ with compact support. The famous
Denjoy-Carleman theorem claims that an ultradifferentiable class
$U/V^\cM$ is quasi-analytic if and only if
$$\sum_{k\in \NN} \frac{1}{\inf_{j\geq k} M_j^{1/j}}<+\infty.$$

In the present paper we study the entropy of smooth maps on a
compact  smooth Riemannian manifold $M$ of dimension $m$.  We
consider the exponential map $exp$ associated to the smooth
Riemannian metric on $M$ and we denote by $R_{inj}$ its radius of
injectivity. The first derivative is the important map in the
estimation of the entropy. Given a weight $\cM$ we work in the
following on  the spaces $C_{U/V}^\cM(M)$ (resp.
$\Diff_{U/V}^\cM(M)$) of $C^\infty$ maps (resp. diffeomorpshims)
$f:M \rightarrow M$ whose first derivative is
$U/V$-ultradifferentiable with respect to $\cM$ through the local
charts given by the exponential map, i.e.  for any $x\in M$ and for
any $R$, satisfying $f(B_M(x,R))\subset B_M(f(x),R_{inj})$, we have
$$D(exp_{f(x)}^{-1}\circ f\circ exp_x)\in U/V^\cM(B_{\mathbb{R}^m}(0,R),\mathbb{R}^{m^2}).$$

\subsubsection{Algebraic Lemma}
We present in this section the main semi-algebraic tool used in
Yomdin's entropy theory.

For any integer $r$ and for any $C^r$ map we let $\|f\|_r$ be the
supremum norm of the derivatives of order no more than $r$:
$$\|f\|_r:=\max_{k=1,\cdots,r}\|D^kf\|.$$

The following algebraic lemma was stated by Gromov in \cite{Gr85}.\\

\noindent{\bf Algebraic Lemma. } \,\,\label{ygpol} Let $P\in
(\mathbb{R}[X_1,\cdots,X_l])^m$ be a real $m$-vector polynomial in
$l$ variables of total degree $r$. Then there exists an integer
$C_{r,l,m}$ depending only on $r$ and $m$ (but not on the coefficients
of $P$) and continuous maps
$\phi_1,\cdots,\phi_{C_{r,l,m}}:[0,1]^l\rightarrow [0,1]^l$, such that
:
\begin{itemize}
\item $P^{-1}([0,1]^m)=\bigcup_{i=1,\cdots,C_{r,l,m}}\phi_i([0,1]^l)$;\\
\item $\phi_i$ is analytic on $(0,1)^l$ for each $i$;\\
\item $\|P\circ \phi_i\|_r\leq 1$ and $\|\phi_i\|_r\leq 1$ for each $i$.
\end{itemize}

A complete proof of this lemma may be found in \cite{Bur08} or \cite
{PW06}.\\

We need to estimate the algebraic complexity $C_{r,l,m}$ in the
previous lemma. In Section \ref{quantgr} we are going  to show
$C_{r,1,m}$ grows polynomially with $r$.

\subsubsection{Main result}Let us first set some notations.

When  $a=(a_k)_{k\in \NN}$ is a non decreasing unbounded sequence
of positive real numbers with $a_0=0$, we will consider the inverse function $a^{-1}$ of
$a$ defined for all nonnegative real numbers  $x$ by
$$a^{-1}(x):=\sup \{l\in \NN, \ a_l\leq x\}\in \NN.$$

The inverse function $a^{-1}$ of $a$ is  an  non decreasing unbounded function on $\mathbb{R}^+$.
Observe also that  if $a=(a_k)_{k\in \NN}$  and   $b=(b_k)_{k\in \NN}$ are two sequences as above with
$a_k\geq b_k$ for all $k\in \NN$, then $a^{-1}(x)\leq b^{-1}(x)$ for all $x\geq 0$.

For a logarithmic convex superexponential weight $\mathcal{M}=(M_k)_k$ we denote by $G_\mathcal{M}$
the inverse function of $a^\cM=(a_k^\cM)_k$ with $a_0^\cM=0$ and $a_k^\cM=\frac{\log^+ (M_k/M_0)}{k}$ for $k> 0$.

For integers $0\leq l\leq m$ and for a  real number $D\geq 1$, we call a weight $\mathcal{M}=(M_k)_k$
 $(l,m,D)$-admissible when  $M_0\geq e$ and for all integers $k>0$ :
$$\frac{\log (M_k/M_0)}{k}\geq \log k+\frac{2k\log (2^{2m+l}C_{k,l,m}k^{2l})}{Dl},$$
where$C_{k,l,m}$ is the constant in the Algebraic Lemma. A weight is said $(l,m)$-admissible (resp. admissible) if it is $(l,m,D)$-admissible for some $D$ (resp. for some $l,m,D$). Admissible  weights are superexponential. \\

For an $U$-ultradifferentiable map $f\in C_{U}^\cM(M)$ with respect to a
logarithmic convex admissible weight $\cM$ the rates of convergence to zero of
the $\vep$-tail entropy, $h^*(f,\vep)$, and of the $\vep$-local
volume growths, $(v_l^*(f,\vep))_{l\leq m}$,  are related with the growth in $r$ of $M_r$ as follows.

\begin{main}\label{gen}
Let $M$ be a  compact smooth Riemannian manifold of dimension $m$,
$0\leq l\leq m$ be an integer, $D$  be a positive real number and
$\mathcal{M}=(M_n)_n$ be an $(l,m,D)$-admissible (resp.
$(m,m,D)$-admissible) weight.

Then for all $f\in C_V^{\mathcal{M}}(M)$  and for all $0<\vep<\min(1,R_{inj}^{2})$, we have
$$v_l^*(f,\vep)\leq \frac{(2D+1)l}{ G_\mathcal{M}\left(|\log \vep|/2\right)}\log
M_0 \ \left(\text{resp. } h^*(f,\vep)\leq \frac{(2D+1)m}{ G_{\mathcal{M}} \left(|\log \vep|/2\right)}\log M_0\right).$$
\end{main}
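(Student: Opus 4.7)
The proof follows Yomdin's semi-algebraic reparametrization strategy, with the order $r$ of Taylor expansion chosen adaptively as $r=G_\cM(|\log\vep|/2)$. The goal is to cover, for any $l$-disk $\sigma$, the set $\sigma^{-1}(B_n(f,x,\vep))$ by $N^n$ analytic reparametrizations $\phi_i:[0,1]^l\to[0,1]^l$ such that each $f^{n-1}\circ\sigma\circ\phi_i$ maps into a $\sqrt\vep$-ball and, after target rescaling to a unit ball, satisfies $\|\cdot\|_1\leq 1$. Each piece then has $l$-volume at most $\vep^{l/2}$, so
\[\bigl|f^{n-1}\circ\sigma|_{\sigma^{-1}(B_n(f,x,\vep))}\bigr|\leq N^n\vep^{l/2},\]
giving $v_l^*(f,\vep)\leq\log N$. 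The tail entropy bound for $(m,m,D)$-admissible $\cM$ follows by the same reparametrization applied at $l=m$ to $m$-disks from a finite cover of $M$: the $\delta$-spanning number of $B_n(f,x,\vep)$ is bounded by $C\delta^{-m}\bigl|f^{n-1}\circ\sigma|_{\sigma^{-1}(B_n)}\bigr|$, and letting $\delta\to 0$ yields $h^*(f,\vep)\leq v_m^*(f,\vep)$.

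The construction is carried out in local exponential charts at scale $\sqrt\vep<R_{inj}$, which justifies the restriction $\vep<R_{inj}^2$ and accounts for the factor $1/2$ in $|\log\vep|/2$. With $r=G_\cM(|\log\vep|/2)$ one has $a_r^\cM\leq|\log\vep|/2$, equivalently $M_r\leq M_0\vep^{-r/2}$. After rescaling each $\sqrt\vep$-ball to a unit ball, the rescaled map $\widetilde f$ satisfies $\|D^k\widetilde f\|\leq\vep^{(k-1)/2}M_{k-1}\leq M_0$ for all $1\leq k\leq r$ (using that $a_{k-1}^\cM\leq a_r^\cM$ by logarithmic convexity), and its Taylor polynomial $\widetilde P$ of degree $r-1$ approximates $\widetilde f$ with error at most $M_0/r!$, negligibly small. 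The technical heart is then a one-step reparametrization lemma: given a disk $\sigma$ with $\|\sigma\|_r\leq 1$ mapping into $B(f^j(x),\sqrt\vep)$, produce at most $N=N(r,l,m,M_0)$ analytic reparametrizations covering $\sigma^{-1}(f^{-1}(B(f^{j+1}(x),\sqrt\vep)))$, each yielding a new disk satisfying the same hypotheses one step later. This is done by composing $\widetilde P$ with $\sigma$ to form a nearly polynomial map of degree $r-1$, rescaling its target to cast the relevant preimage into $[0,1]^m$, and invoking the Algebraic Lemma at degree $r-1$ to produce $C_{r-1,l,m}$ analytic reparametrizations with $\|\widetilde P\circ\sigma\circ\phi_i\|_r\leq 1$.

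The final step is to bound $\log N$ by $(2D+1)l\log M_0/r$ using the $(l,m,D)$-admissibility of $\cM$. The multiplier $N$ decomposes into the Algebraic Lemma constant $C_{r-1,l,m}$, polynomial factors $r^{2l}$ and $2^{2m+l}$ coming from target rescaling and cube covering, and a Yomdin-type contribution $M_0^{l/r}$ coming from the expansion $\|D\widetilde f\|\leq M_0$. The admissibility inequality
\[a_r^\cM\geq\log r+\frac{2r\log(2^{2m+l}C_{r,l,m}r^{2l})}{Dl}\]
is precisely calibrated so that, evaluated at $r=G_\cM(|\log\vep|/2)$, the logarithm of the first three factors is absorbed into $2Dl\log M_0/r$, while the $M_0^{l/r}$ factor contributes the remaining $l\log M_0/r$; together they sum to $(2D+1)l\log M_0/r$. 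The main obstacle will be the careful bookkeeping: propagating the Taylor error through the iteration, maintaining compatibility of rescalings across successive steps, and using the admissibility inequality sharply enough to avoid an extraneous $|\log\vep|$ term in the bound. The polynomial-in-$r$ control of $C_{r,l,m}$ established in Section~\ref{quantgr} ensures that the admissibility hypothesis is nonvacuous.
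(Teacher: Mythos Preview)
Your overall picture of the Yomdin reparametrization machinery is accurate, but the argument as written has a genuine gap: you apply the one-step Algebraic Lemma directly to $f$ and then try to bound the per-step multiplier $N$ by $M_0^{(2D+1)l/r}$. This fails. The multiplier $N$ carries an additive term $\log\tilde C_{r,l,m}$ (with $\tilde C_{r,l,m}=2^{2m+l}C_{r,l,m}$) which does not shrink with $r$; in fact $\log\tilde C_{r,l,m}\ge 2l\log r$ already from your own $r^{2l}$ factor. Your claimed absorption ``$\log(\tilde C_{r,l,m}r^{2l})\le 2Dl\log M_0/r$'' cannot hold for large $r$ (equivalently small $\vep$), since the left side grows in $r$ while the right side tends to $0$. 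The admissibility inequality does not say what you need: it bounds $\log(\tilde C_{r,l,m}r^{2l})$ by $\tfrac{Dl}{2r}\,a_r^{\cM}\le \tfrac{Dl}{4r}|\log\vep|$, which is of order $|\log\vep|/r$, not $\log M_0/r$.

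What the paper does to fix this is the iterate trick: one first applies the Yomdin--Buzzi estimate (your one-step lemma iterated) to $f^p$ rather than to $f$, obtaining
\[
v_l^*(f^p,\vep)\ \le\ \frac{l}{r}\log^+\|Df^p\|\ +\ \log\tilde C_{r,l,m}\ +\ \frac{2l}{r}\log B_r,
\]
and then uses $v_l^*(f,\vep)\le v_l^*(f^p,\vep)/p$. The point is that the constant term $\log\tilde C_{r,l,m}$ now gets \emph{divided by $p$}. Choosing $p\simeq r\log(\tilde C_{r,l,m}r^{2l})/(Dl\log M_0)$ forces $\log(\tilde C_{r,l,m}r^{2l})/p\le 2Dl\log M_0/r$, which is exactly the bound you wanted but could not obtain at $p=1$. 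The $(l,m,D)$-admissibility condition is then used not to control $N$ but in the reverse direction, to guarantee that the scale $\vep$ one is forced to take (so that the higher derivatives of $\vep^{-1}f^p\vep$ up to order $r$ are dominated by $\|Df^p\|\le M_0^p$) satisfies $|\log\vep|\le 2\log(M_r/M_0)/r$, i.e.\ $r\ge G_{\cM}(|\log\vep|/2)$. Your scheme correctly identifies this last inversion, but without the passage to $f^p$ the constant $\tilde C_{r,l,m}$ survives undivided and the final inequality is false.
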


 If $f$ is in $C_U^\cM(M)$ for some  logarithmic superexponential weight  $\mathcal{M}=(M_k)_k$ then $f$ is in $C_V^{\tilde{\cM}}(M)$ for the logarithmic convex weight $\tilde{M}=(\tilde{M}_k)_k$ with $(\tilde{M}_k)_k=(ab^kM_k)_k$ for some constants $a$ and $b$ depending on $f$. Then one easily sees that there exists a constant $C=C(a,b)$ such that  for all $x\geq C$ we have
$$G_{\tilde{\mathcal{M}}}(x)\geq G_{\mathcal{M}}(x-C).$$
Therefore we get the following estimates for $U$-ultradifferentiable classes:

\begin{maincor}\label{oups}
Let $M$ be a  compact smooth Riemannian manifold of dimension $m$,
$0\leq l\leq m$ be an integer and  $\mathcal{M}=(M_n)_n$ be an
$(l,m)$-admissible weight (resp. $(m,m)$-admissible).

Then for all $f\in C_U^{\mathcal{M}}(M)$, there exists a constant $C=C(f,\mathcal{M})\geq 1$, such that   for all $0<\vep<1/C$ we have

$$v_l^*(f,\vep)\leq \frac{C}{ G_{\mathcal{M}}\left(|\log (C\vep)|/2\right)} \ \left(\text{resp. } \, h^*(f,\vep)\leq \frac{C}{ G_{\mathcal{M}}\left(|\log (C\vep)|/2\right)}\right).$$
\end{maincor}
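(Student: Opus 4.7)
The strategy is the one already signalled in the paragraph preceding the statement: reduce to Theorem~\ref{gen} by absorbing the auxiliary constants from the $U$-ultradifferentiability condition into a rescaled weight, then control the shift produced by this rescaling at the level of the function $G_\mathcal{M}$.

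First I would produce constants $a, b \geq 1$ (depending on $f$) such that, in every chart of a fixed finite cover of $M$ by exponential charts, $\|D^{r}\bigl(D(\exp_{f(x)}^{-1}\circ f\circ \exp_x)\bigr)\|_\infty \leq a b^{r}M_{r}$ for all $r \in \NN$; compactness of $M$ lets one uniformise the local constants furnished by the definition of $U^{\mathcal{M}}$. Setting $\tilde M_{k}:=ab^{k}M_{k}$ then gives a weight $\tilde{\mathcal{M}}$ such that $f\in C_{V}^{\tilde{\mathcal{M}}}(M)$. Next I would check that $\tilde{\mathcal{M}}$ inherits the two properties needed to invoke Theorem~\ref{gen}: logarithmic convexity holds because $\log \tilde M_{k}$ differs from $\log M_{k}$ by the affine function $\log a + k\log b$, and the $(l,m,D)$-admissibility (resp.\ $(m,m,D)$-admissibility) is preserved since $\log(\tilde M_{k}/\tilde M_{0})=k\log b+\log(M_{k}/M_{0})\geq \log(M_{k}/M_{0})$ strengthens the defining inequality. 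Theorem~\ref{gen} applied to $\tilde{\mathcal{M}}$ then yields
\begin{equation*}
v_{l}^{*}(f,\vep)\leq \frac{(2D+1)\,l\,\log(aM_{0})}{G_{\tilde{\mathcal{M}}}(|\log\vep|/2)}
\end{equation*}
for $0<\vep<\min(1,R_{inj}^{2})$.

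The main (and only slightly finicky) step is the comparison between $G_{\tilde{\mathcal{M}}}$ and $G_{\mathcal{M}}$. A direct computation gives $a_{k}^{\tilde{\mathcal{M}}}=\log b+a_{k}^{\mathcal{M}}$ as soon as the $\log^{+}$ truncation is inactive, which by the superexponential growth of $\mathcal{M}$ happens for all but finitely many $k$; more generally $a_{k}^{\tilde{\mathcal{M}}}\leq a_{k}^{\mathcal{M}}+\log b$ for every $k\geq 1$. Taking inverses reverses the inequality and produces $G_{\tilde{\mathcal{M}}}(x)\geq G_{\mathcal{M}}(x-\log b)$ for $x$ larger than some threshold depending on $b$. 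To turn the shift $x\mapsto x-\log b$ into the advertised form, I would use the elementary identity $|\log(C\vep)|/2=|\log\vep|/2-(\log C)/2$, valid for $\vep\leq 1/C$ and $C\geq 1$, and choose $C$ to be the maximum of $(2D+1)l\log(aM_{0})$, $b^{2}$, $R_{inj}^{-2}$, and the threshold above. Monotonicity of $G_{\mathcal{M}}$ then absorbs all the surviving constants into the single bound $v_{l}^{*}(f,\vep)\leq C/G_{\mathcal{M}}(|\log(C\vep)|/2)$. The entropy estimate $h^{*}(f,\vep)$ is obtained verbatim from the parenthetical bound of Theorem~\ref{gen}, with $m$ in place of $l$ and $(m,m,D)$-admissibility in place of $(l,m,D)$-admissibility. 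The only genuine subtlety, rather than pure bookkeeping, is the $G$-comparison above; everything else is algebra.
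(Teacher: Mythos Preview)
Your proposal is correct and follows essentially the same route as the paper: pass from $C_U^{\mathcal{M}}$ to $C_V^{\tilde{\mathcal{M}}}$ with $\tilde M_k=ab^kM_k$, apply Theorem~\ref{gen} to the rescaled weight, and then compare $G_{\tilde{\mathcal{M}}}$ with $G_{\mathcal{M}}$ via the shift $a_k^{\tilde{\mathcal{M}}}\leq a_k^{\mathcal{M}}+\log b$. The paper compresses all of this into the single sentence preceding the corollary (``one easily sees that there exists a constant $C=C(a,b)$ such that for all $x\geq C$ we have $G_{\tilde{\mathcal{M}}}(x)\geq G_{\mathcal{M}}(x-C)$''), whereas you have spelled out the verification that $\tilde{\mathcal{M}}$ remains logarithmic convex and $(l,m,D)$-admissible and made the $G$-comparison explicit; but the argument is the same.
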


Since $C_{r,1,m}$ grows polynomially with $r$ as shown in Proposition \ref{grone}, the weight $(k^{k^2})_k$
is $(1,2)$-admissible. Together with Inequality (\ref{surf}) we get as a consequence

\begin{maincor}\label{surfh}
Let $M$ be a compact smooth Riemannian surface. Then for all $f\in
\Diff_U^{(k^{k^2})_k}(M)$,  there exists a constant $C=C(f)\geq 1$,
such that   for all $0<\vep<1/C$ we have
$$h_{\loc}(f,\vep)\leq \frac{C\log|\log \vep|}{|\log \vep|}.$$
\end{maincor}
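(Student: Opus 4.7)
The plan is to combine Newhouse's surface inequality (\ref{surf}) with Corollary \ref{oups} applied at $l=1$, $m=2$, and an elementary asymptotic inversion of the sequence $k\log k$.

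First I would verify $(1,2)$-admissibility of the weight $\cM=(k^{k^2})_k$ (with $M_0$ suitably normalized so that $M_0\geq e$). Logarithmic convexity is clear since $k\mapsto k^2\log k$ is convex. The admissibility inequality at parameters $l=1$, $m=2$ reads
$$\frac{\log(M_k/M_0)}{k}\geq \log k+\frac{2k\log(2^{5}C_{k,1,2}k^{2})}{D}.$$
The left-hand side behaves like $k\log k$. By the polynomial estimate on $C_{r,1,m}$ coming from Proposition \ref{grone}, $\log(2^{5}C_{k,1,2}k^{2})=O(\log k)$, so the right-hand side is $\log k+O(k\log k/D)$. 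Choosing $D$ large enough, and if necessary modifying finitely many initial terms of the weight (a change which does not affect $G_\cM$ asymptotically), the inequality holds for every $k\geq 1$. Corollary \ref{oups} then delivers a constant $C=C(f)\geq 1$ such that
$$v_1^*(f,\vep)\leq \frac{C}{G_\cM(|\log(C\vep)|/2)}\qquad\text{for every }0<\vep<1/C.$$

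Next I would invert the sequence $a_k^\cM\sim k\log k$. Writing $y=k\log k$, one has $\log y=\log k+\log\log k=(1+o(1))\log k$, hence $k=(1+o(1))y/\log y$; this yields a constant $c>0$ such that $G_\cM(x)\geq cx/\log x$ for all sufficiently large $x$. Plugging this back and using (\ref{surf}) to transfer from $v_1^*$ to $h_{\loc}$ on the surface,
$$h_{\loc}(f,\vep)\leq v_1^*(f,2\vep)\leq \frac{2C\log|\log(2C\vep)|}{c\,|\log(2C\vep)|}.$$
Absorbing multiplicative constants (valid for $\vep$ small enough) produces the announced bound $h_{\loc}(f,\vep)\leq C'\log|\log\vep|/|\log\vep|$.

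The only delicate input is the polynomial estimate on $C_{r,1,m}$, which is established separately as Proposition \ref{grone}; once this estimate is available, the derivation of Corollary \ref{surfh} is entirely a bookkeeping exercise inside Corollary \ref{oups}. Consequently, the real obstacle here is not in the corollary itself but in the quantitative refinement of the Algebraic Lemma in dimension one of the source.
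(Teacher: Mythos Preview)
Your proposal is correct and follows the same route as the paper: verify $(1,2)$-admissibility of $(k^{k^2})_k$ via the polynomial bound on $C_{r,1,m}$ from Proposition~\ref{grone}, apply Corollary~\ref{oups} with $l=1$, invert $a_k^\cM=k\log k$ to obtain $G_\cM(x)\gtrsim x/\log x$, and conclude through the surface inequality~(\ref{surf}). Your handling of the normalization $M_0\geq e$ and of small values of $k$ is in fact slightly more careful than the paper's, which simply asserts admissibility; otherwise the arguments coincide step for step.
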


Analytic maps corresponds to the $U$-ultradifferentiable class with respect to the weight $\cM=(k^k)_k$. In
particular the above Corollary applies to analytic maps. We get in
this way a new proof of Yomdin's result \cite{Yom91} for the
$\vep$-local entropy with a real approach, i.e. by using $C^k$
reparametrizations instead of analytic unit reparametrizations  of
semi-algebraic sets.  However Corollary \ref{surfh} is more general
 as it states that analytic maps are not the largest
$U$-ultradifferentiable class for which the rate
 in  $\frac{\log|\log\vep|}{|\log\vep|}$
applies.

\subsection{Rate for multimodal maps of the interval}

We consider in this section multimodal maps, i.e. continuous
 piecewise (with a finite number of pieces) monotone maps of
  the unit interval. Such a map $f:[0,1]\rightarrow [0,1]$ is said $l$-multimodal
  if  $[0,1]$ may be subdivided into $l$ and not more intervals of monotonicity for $f$.
  We also let $L(f)$ be the least length of any subinterval of this minimal partition.

It was proved by Misiurewicz  and Szlenk that multimodal maps are
asymptotically $h$-expansive \cite{Misch}. We can in fact give a
precise estimate of the rate of entropy of the $\vep$-tail entropy for smooth multimodal maps.

\begin{main}\label{wmulti}
Let $f$ be a $C^1$ multimodal map. Then we have for any $0<\vep<L(f)$

$$h^*(f,\vep) \leq \frac{\log2\cdot\log^+
\|f'\|_{\infty}}{\log^+\left(1/ w(f',\varepsilon)\right)}$$ where
$w(f',.)$ is the modulus of continuity of $f'$, defined for any
$\vep>0$ as $w(f',\vep):=\sup_{|x-y|<\varepsilon}|f'(x)-f'(y)|$.
\end{main}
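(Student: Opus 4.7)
The plan is to reduce $h^{*}(f,\vep)$ to the exponential growth rate of the number of monotone pieces of $f^{n}$ restricted to each dynamical ball, and then to control that count through a balance between combinatorial splitting at critical encounters and the smallness of $|f'|$ near critical points.

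\emph{Reduction to lap counting.} First I would apply Bowen's formula $h^{*}(f,\vep)=\lim_{\delta\to 0}\limsup_{n}\tfrac{1}{n}\log\sup_{x}r_{n}(f,B_{n}(f,x,\vep),\delta)$ and denote by $\ell_{n}(x,\vep)$ the number of maximal sub-intervals $J$ of $B_{n}(f,x,\vep)$ on which $f^{n}$ is monotone. On any such $J$, every iterate $f^{i}|_{J}$ for $i<n$ is monotone and $f^{i}(J)\subset B(f^{i}x,\vep)$ has length at most $2\vep$; a level-wise partition of $J$ (cutting the image $f^{i}(J)$ into $\delta$-pieces at each level and taking the common refinement) then bounds $r_{n}(f,J,\delta)$ polynomially in $n$. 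Summing over the pieces reduces the theorem to proving $\limsup_{n}\sup_{x}\tfrac{1}{n}\log\ell_{n}(x,\vep)\le\tfrac{\log 2\cdot\log^{+}\|f'\|_{\infty}}{\log^{+}(1/w(f',\vep))}$.

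\emph{Combinatorial and analytic bounds on $\ell_{n}$.} Since $\vep<L(f)$, each ball $B(f^{i}x,\vep)$ meets only finitely many intervals of monotonicity of $f$, and a piece can split at step $i$ only when $B(f^{i}x,\vep)$ contains a critical point of $f$. Calling such an $i$ a \emph{critical step} and letting $N_{\mathrm{crit}}(n,x)$ be their number, one gets the combinatorial inequality $\ell_{n}(x,\vep)\le 2^{N_{\mathrm{crit}}(n,x)}$. On the analytic side, at any critical step there is a critical point $c$ with $|f^{i}x-c|<\vep$, so $|f'(f^{i}x)|=|f'(f^{i}x)-f'(c)|\le w(f',\vep)=:\rho$; combined with $|f'|\le\|f'\|_{\infty}$ at other steps and the continuity of $f'$ over each ball of radius $\vep$, this propagates to a uniform bound $\sup_{J}|(f^{n})'|\le(2\rho)^{N_{\mathrm{crit}}(n,x)}\|f'\|_{\infty}^{n-N_{\mathrm{crit}}(n,x)}$ on every piece $J$.

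\emph{Trade-off and the main obstacle.} The two bounds are complementary: when $N_{\mathrm{crit}}/n$ is small, $2^{N_{\mathrm{crit}}}$ dominates and is already close to the target; when $N_{\mathrm{crit}}/n$ is large, $\sup_{J}|(f^{n})'|$ becomes very small, forcing pieces to be short via $|J|\ge|f^{n}J|/\sup_{J}|(f^{n})'|$ together with $\sum_{J}|J|\le 2\vep$. Equating the two regimes pins down the threshold $N_{\mathrm{crit}}/n\approx \log\|f'\|_{\infty}/\log(1/\rho)$, from which the advertised rate follows after a short optimization. The hardest step will be rigorously bounding the contribution of pieces with very small $f^{n}$-image, which slip past the analytic inequality; I expect to handle these either by replacing $B_{n}$ with $B_{n+1}$ so that $|f^{n}J|\le 2\vep$ becomes automatic, or by separating ``big'' and ``small'' pieces through a size threshold scaled to $\delta$ before taking the double limit in $n$ and $\delta$.
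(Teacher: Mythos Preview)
Your overall plan---two competing bounds on the entropy of a dynamical ball, followed by an optimization over the frequency of critical encounters---is exactly the paper's strategy, and your lap-counting reduction together with $\ell_n\le 2^{N_{\mathrm{crit}}}$ matches the paper's first ingredient. The divergence, and the genuine gap, is in your second bound.

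You try to turn the smallness of $\sup_J|(f^n)'|$ into a bound on the \emph{number} of monotone pieces via $|J|\ge |f^nJ|/\sup_J|(f^n)'|$ and $\sum_J|J|\le 2\vep$. This only controls pieces with $|f^nJ|$ bounded below; pieces with arbitrarily small image escape the count, as you acknowledge. Your first proposed fix, passing to $B_{n+1}$, gives only the \emph{upper} bound $|f^nJ|\le 2\vep$, which is the wrong direction. The second fix (thresholding at scale $\delta$) is not obviously salvageable either: even though each ``small'' piece has $|f^nJ|<\delta$, you still have to $(n,\delta)$-span it at every intermediate time, and there is no a priori polynomial bound on the number of such pieces.

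The paper's second bound avoids lap counting altogether. With $t_n:=\max\big(1,\max_{0\le m<n}\prod_{k\le m}\|f'|_{B(f^kx,\vep)}\|\big)$, any $\delta/t_n$-cover of the interval $B_n(f,x,\vep)$ is $(n,\delta)$-spanning, which gives directly
\[
h\big(f,B_\infty(f,x,\vep)\big)\le\max\Big(0,\ \lim_n\tfrac1n\sum_{k<n}\log\|f'|_{B(f^kx,\vep)}\|\Big).
\]
At critical steps the supremum on the right is at most $w(f',\vep)$, at other steps at most $\|f'\|_\infty$, and the bound becomes $(1-t)\log^+\|f'\|_\infty+t\log w(f',\vep)$ with $t$ the asymptotic frequency of critical steps. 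To make the optimization rigorous the paper first reduces, via Proposition~2.5 of \cite{LVY}, to $\mu$-almost every $x$ for an ergodic measure $\mu$; the ergodic theorem then forces $N_{\mathrm{crit}}(n,x)/n\to t=M_{\mu,\vep}/\log 2$, so both bounds become functions of the single parameter $t$ and the minimum is maximized at the crossing. Your purely topological $\sup_x$ formulation does not give such a common parameter, and without it the two bounds cannot be played off against each other.
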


For $C^l$ $l$-multimodal maps $f$ one can get  an upper bound for any $\vep$,
independently from $L(f)$.

\begin{main}\label{quasionedim}
Let $f$ be a $C^l$ $l$-multimodal map. Then we have for any $0<\vep<1$
$$h^*(f,\vep) \leq \frac{\log^+ \|f\|_{l}}{|\log
\vep|}.$$
\end{main}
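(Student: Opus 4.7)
My plan is to iterate an $(n,\vep)$-ball reparametrization scheme, combining the Yomdin--Gromov algebraic reparametrization with the $l$-multimodal structure, matching the smoothness order $l$ with the modality bound $l$.

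Fix $x \in [0,1]$, $n \in \NN$, and put $J = B_n(f,x,\vep)$. Inductively on $i = 0, 1, \ldots, n$, I construct a family $\mathcal{F}_i$ of $C^l$ reparametrizations $\sigma : [0,1] \to J$ whose images cover $J$ and satisfy $\|f^j \circ \sigma\|_l \leq \vep$ for all $j \leq i$. The base case follows by partitioning $J$ (of length $\leq 2\vep$) into affine reparametrizations. For the inductive step, Fa\`a di Bruno's formula combined with $\|f^i \circ \sigma\|_l \leq \vep$ gives $\|f^{i+1} \circ \sigma\|_l \leq C_l \|f\|_l \vep$. To restore the inductive bound at level $i+1$, I apply the Algebraic Lemma to the degree $(l-1)$ Taylor polynomial of $f$ at $f^i x$, whose Taylor remainder on $B(f^i x, \vep)$ is controlled by $\|f\|_l \vep^l/l!$, refining $\sigma$ into $N$ sub-reparametrizations $\sigma'$ satisfying $\|f^{i+1} \circ \sigma'\|_l \leq \vep$.

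The main quantitative estimate is the per-step splitting factor $N$. A direct application of Yomdin's reparametrization in dimension 1 gives $N = O(\|f\|_l^{1/l})$, yielding the crude Buzzi-type bound $h^*(f,\vep) \leq \log^+\|f\|_l/l$, independent of $\vep$. To reach the $\vep$-dependent statement, I exploit two features of the $C^l$ $l$-multimodal setting: (a) on each $\vep$-interval $B(f^i x, \vep)$, the map $f$ has at most $l$ monotone branches, and (b) the Taylor polynomial of degree $l-1$ approximates $f$ to order $\vep^l \|f\|_l$, a quantity much smaller than $\vep$ for small $\vep$. Combining these via the Algebraic Lemma applied at scale $\vep$ should produce a sharper per-step factor whose logarithm is $O(\log^+\|f\|_l / |\log\vep|)$ in the small-$\vep$ regime.

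Iterating $n$ times, the total number of reparametrizations is at most $N^n$, and each reparametrization $\sigma$ with $\|f^j \circ \sigma\|_l \leq \vep$ contributes $O(1)$ to the $(n,\delta)$-spanning count for $\delta$ small. Hence
\[
h^*(f,\vep) \leq \log N \leq \frac{\log^+\|f\|_l}{|\log\vep|}.
\]
The main obstacle is exactly the improved per-step estimate: refining the crude Yomdin count $\|f\|_l^{1/l}$ to the sharper $\exp\bigl(O(\log^+\|f\|_l / |\log\vep|)\bigr)$ requires carefully choosing intermediate scales in the algebraic reparametrization so as to absorb the Taylor remainder into subsequent iterations, together with a tight use of the $l$-multimodal structure to avoid excessive splitting near critical points. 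Obtaining the precise constant $1$ in $\log^+\|f\|_l/|\log\vep|$ (rather than a larger constant arising from Fa\`a di Bruno or the Algebraic Lemma) is where the bulk of the technical work lies.
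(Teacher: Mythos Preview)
Your proposal has a genuine gap, and in fact the overall approach is not the one the paper uses.

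You correctly identify that the naive Yomdin reparametrization gives only $h^*(f,\vep)\le (\log^+\|f\|_l)/l$, with no $\vep$-dependence, and you acknowledge that the crucial step is upgrading the per-step splitting factor to $\exp\bigl(O(\log^+\|f\|_l/|\log\vep|)\bigr)$. But you do not actually carry this out; you only gesture at ``combining'' the $l$-multimodal structure with the Taylor remainder estimate. The Algebraic Lemma, applied to the degree-$(l-1)$ Taylor polynomial on an $\vep$-interval, produces a number of pieces depending on $l$ and on $\|f\|_l$ but not on $\vep$ in the way you need. There is no mechanism in your sketch that converts the smallness of the Taylor remainder $\|f\|_l\vep^l$ into a genuinely smaller \emph{count} of reparametrizations per step; the Algebraic Lemma controls $C^l$ norms, not cardinalities, once the polynomial degree is fixed. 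So the ``main obstacle'' you flag is in fact unresolved, and it is not clear it can be resolved along these lines without essentially reproducing the machinery of Theorem~A (choosing $r$ and an iterate $f^p$ depending on $\vep$), which would introduce extra constants and not yield the clean bound stated.

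The paper's proof is entirely different and much more elementary; it does not use the Algebraic Lemma or Fa\`a di Bruno at all. The key ingredients are: (i) an ergodic-theoretic reduction to bounding $h(f,B_\infty(f,x,\vep))$ for $\mu$-typical $x$; (ii) two competing upper bounds for this quantity. The first bound is combinatorial: the number of $f^n$-monotone branches meeting $B_n(f,x,\vep)$ is at most $\prod_k M_{f^kx,\vep}$, where $M_{y,\vep}\le l$ is the number of $f$-monotone branches in $B(y,\vep)$, giving $h(f,B_\infty(f,x,\vep))\le M_{\mu,\vep}:=\int\log M_{y,\vep}\,d\mu$. The second bound uses a Rolle-type lemma: if $f'$ has $k$ zeros in an interval $I$ then $|f'|\le \|f^{(k+1)}\|_\infty|I|^k$ on $I$; hence on $B(y,\vep)$ one has $\|f'\|\le \|f\|_l\,\vep^{M_{y,\vep}-1}$, and the local entropy is bounded by the positive part of the Lyapunov exponent $\int\log\|f'|_{B(y,\vep)}\|\,d\mu\le (e^{M_{\mu,\vep}}-1)\log\vep+\log^+\|f\|_l$. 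Taking the minimum of the two bounds and optimizing over $M_{\mu,\vep}$ gives exactly $\log^+\|f\|_l/|\log\vep|$. The point is that many monotone branches \emph{produce} entropy but simultaneously \emph{force} small derivative via repeated Rolle, and the balance between these two effects is what gives the $1/|\log\vep|$ rate. This tension is invisible in the Yomdin framework you propose.
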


 By Markov inequality, for any integer $r>0$ there
exists a constant $C(r)$ such that, for any polynomial
$P:[0,1]\rightarrow [0,1]$ of degree $r$, one has $\|P\|_{r}\leq
C(r)$. Therefore, Theorem \ref{quasionedim} yields
\begin{maincor}\label{polynomial} For any polynomial
$P:[0,1]\rightarrow [0,1]$ of degree $r$, we have for any $0<\vep<1$
$$h^*(P,\vep)\leq
\frac{\log^+ C(r)}{|\log \vep|}.$$
\end{maincor}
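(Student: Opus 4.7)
The plan is to derive the corollary as a direct consequence of Theorem \ref{quasionedim}, combined with an elementary observation about polynomials. The first step is to verify that a polynomial $P:[0,1]\to[0,1]$ of degree $r$ fits into the framework of multimodal maps with an appropriate smoothness index. Since $P'$ is a polynomial of degree $r-1$, it has at most $r-1$ zeros in $[0,1]$, so the interval decomposes into at most $r$ maximal subintervals of monotonicity for $P$. Hence $P$ is $l$-multimodal for some integer $l\leq r$, and because $P$ is $C^\infty$ it is in particular $C^l$. Therefore Theorem \ref{quasionedim} applies and yields, for all $0<\vep<1$,
$$h^*(P,\vep)\leq\frac{\log^+\|P\|_l}{|\log\vep|}.$$

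The second step consists in bounding $\|P\|_l$ uniformly in terms of $r$ alone. Since $l\leq r$ we have
$$\|P\|_l=\max_{1\leq k\leq l}\|D^kP\|_\infty\leq\max_{1\leq k\leq r}\|D^kP\|_\infty=\|P\|_r,$$
and the Markov inequality recalled in the statement provides a constant $C(r)$, depending only on $r$, such that $\|P\|_r\leq C(r)$ for every polynomial of degree $r$ mapping $[0,1]$ into itself. Substituting this into the previous display gives the claimed bound $h^*(P,\vep)\leq \log^+C(r)/|\log\vep|$. There is no real obstacle here: the substance of the corollary lies entirely in Theorem \ref{quasionedim}, while the Markov estimate is a classical fact from polynomial approximation theory, whose only role is to convert the $C^l$-norm information in that theorem into an algebraic quantity depending solely on the degree.
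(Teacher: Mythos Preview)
Your proof is correct and follows exactly the paper's approach: apply Theorem~\ref{quasionedim} and bound $\|P\|_r$ via Markov's inequality. You are in fact slightly more careful than the paper, which simply writes ``Theorem~\ref{quasionedim} yields'' without spelling out that $P$ is $l$-multimodal for some $l\le r$ and that $\|P\|_l\le\|P\|_r$; your version makes this step explicit.
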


We will show in the next section that the above upper bound in
$1/|\log \vep|$ is sharp (See Proposition \ref{rrrem}).

\begin{Rem}
We can not expect to get an upper  bound  in $\frac{C}{|\log \vep|}$
for any $\vep$ with $C$ independent from the degree $r$ in the above
Corollary. See Remark \ref{upper}.
\end{Rem}

\subsection{Non $h$-expansive examples in dimension one}
\subsubsection{Homoclinic tangency} Let $2\leq r\leq +\infty$.
Let $f$ be a $C^r$ interval map and $\Lambda$ be a hyperbolic repeller. We say that $f|_\Lambda$ has a non degenerate
 homoclinic tangency  if there exists a critical point
  (local extremum) $c\in [0,1]$  such that $c\in W^u(\Lambda)$, $f^k(c)\in \Lambda$ for some $k>0$ and $c$
  is non degenerate for $f^k$, i.e. $(f^k)^{(l)}(c)\neq 0$ for some finite $l\leq r$.
   Here the unstable manifold $W^u(\Lambda)$ of $\Lambda$ is defined as the set of points $x\in [0,1]$,
    such that for any neighborhood $V$ of $\Lambda$, the point $x$ belongs to $f^n(V)$ for some positive integer $n$.
 This notion is similar to
   the notion of homoclinic tangency for diffeomorphisms in higher dimensions.
    However by \cite{Koz}  this picture is not persistent under $C^r$ perturbations contrarily to
     higher dimensions (Newhouse phenomenon).

\begin{main}\label{homone}
Let $f$ be a $C^2$ interval map with a non degenerate homoclinic
tangency. Then there exists $C=C(f)$ such that  for any $0<\vep<\frac{1}{2}$  we have
$$h_{\loc}(f,\vep)\geq \frac{C}{|\log\vep|}.$$
\end{main}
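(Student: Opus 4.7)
My plan is to construct, for every sufficiently small $\vep>0$, a Cantor set $K_\vep$ which is a horseshoe for a suitable iterate $f^T$ of $f$, which is contained in a single dynamical $\vep$-ball of $f$, and which carries topological entropy of order $1/|\log\vep|$. The corresponding $f$-invariant ergodic measure $\mu_\vep$ then has $h(\mu_\vep)=\log 2/T\asymp 1/|\log\vep|$. The containment $K_\vep\subset B_\infty(f,y,\vep)$ for every $y\in K_\vep$ (to be verified below) together with the Brin--Katok-style inequality $h(f,A)\ge h(\mu_\vep)$ for any set $A$ with $\mu_\vep(A)>0$ yields, in the definition of $h_{\loc}(\mu_\vep,\vep)$, that $\sup_{x\in F}h(f,F\cap B_\infty(f,x,\vep))\ge h(f,F\cap K_\vep)\ge h(\mu_\vep)$ for every $F$ with $\mu_\vep(F\cap K_\vep)>0$, which is the case as soon as $\mu_\vep(F)>1-\mu_\vep(K_\vep)$. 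Letting $\sigma\to 1$ gives $h_{\loc}(f,\vep)\ge h(\mu_\vep)\ge C/|\log\vep|$, as required.

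After replacing $f$ by an iterate and using density of periodic orbits in the repeller $\Lambda$ together with shadowing, I reduce to the case where $\Lambda$ contains a fixed point $p$ with multiplier $\lambda:=|f'(p)|>1$, $f^k(c)=p$, and there is $\tilde c$ close to $p$ with $f^{N_0}(\tilde c)=c$ for some $N_0\ge 0$. The non-degeneracy hypothesis gives the local normal form
\[
f^k(x)=p+a(x-c)^2+o((x-c)^2),\qquad a\neq 0,
\]
near $c$; assume $a>0$. For $\vep$ small, pick $n$ of size $\asymp|\log\vep|/\log\lambda$, let $q_n$ be the preimage of $\tilde c$ under the contracting branch of $f^{-n}$ at $p$ (so $|q_n-p|\asymp\lambda^{-n}$), and set $T:=n+N_0+k$. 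Since $c$ is critical for $f$, the chain rule together with the normal form yields
\[
f^T(x)=p+a\,\alpha_n^2(x-q_n)^2+o((x-q_n)^2),\qquad \alpha_n:=(f^{n+N_0})'(q_n)\asymp\lambda^n,
\]
so $f^T$ has a quadratic fold at $q_n$ with image based at $p$.

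Solving $f^T(x)=x$ near $q_n$ produces two hyperbolic periodic points $q_n\pm\xi_n$ of $f^T$ with $\xi_n\asymp\lambda^{-3n/2}$. On $\tilde I_n:=[q_n-\xi_n,q_n+\xi_n]$ the map $f^T$ folds $\tilde I_n$ two-to-one onto $[p,q_n+\xi_n]\supset\tilde I_n$. The two monotone branches of this fold supply a disjoint Markov pair $I_n^0,I_n^1\subset\tilde I_n$, each mapped diffeomorphically onto $\tilde I_n$ by $f^T$, so $K_n:=\bigcap_{j\ge 0}f^{-jT}(I_n^0\cup I_n^1)$ is a horseshoe of $f^T$ conjugate to the full two-shift, with $h(f,K_n)=\log 2/T$. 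To control the diameter of orbit segments of length $T$, note that $|y-q_n|\le\xi_n\asymp\lambda^{-3n/2}$ for $y\in K_n$: the first $n$ iterates near $p$ expand this by at most $C\lambda^n$, yielding spread of order $\lambda^{-n/2}$ at time $n$; the next $N_0$ iterates between $\tilde c$ and $c$ multiply by a bounded constant; and the quadratic fold $f^k$ at $c$ squares distances of order $\lambda^{-n/2}$ down to order $\lambda^{-n}$ around $p$. Hence $\max_{0\le i<T}|f^i(y)-f^i(q_n)|\le C_1\lambda^{-n/2}$, and $f^T$-invariance of $K_n$ plus the triangle inequality give $K_n\subset B_\infty(f,y',2C_1\lambda^{-n/2})$ for every $y'\in K_n$. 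Choosing $n$ large enough that $2C_1\lambda^{-n/2}\le\vep$ (which forces $n\asymp|\log\vep|/\log\lambda$) sets $K_\vep:=K_n\subset B_\infty(f,y',\vep)$ and $h_{\loc}(f,\vep)\ge\log 2/T\ge C(f)/|\log\vep|$.

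The main obstacle is not the horseshoe analysis itself, which is a standard parabolic Markov construction in the spirit of Newhouse and Yomdin, but rather the preparatory step: only $c\in W^u(\Lambda)$ and $f^k(c)\in\Lambda$ are given in the hypothesis, and I must produce a preimage $\tilde c$ of $c$ approaching a chosen fixed point $p\in\Lambda$ at the precise hyperbolic rate $\lambda^{-n}$ along a prescribed contracting branch of $f^{-n}$. This requires density of periodic orbits in the repeller, shadowing to anchor a backward orbit of $c$ to a periodic orbit of $\Lambda$, and bounded distortion along the contracting branch at $p$. The other quantitative subtlety is balancing the three phases over one period (two linear stretches around $p$ and the quadratic contraction across $c$) so that the maximal orbit spread peaks at order $\lambda^{-n/2}$; this balance is precisely what forces the $1/|\log\vep|$ rate in the final bound.
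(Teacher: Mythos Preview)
Your overall strategy --- build a $2$-horseshoe for some iterate $f^T$ with $T\asymp |\log\vep|$ that lies inside a single dynamical $\vep$-ball, then invoke its measure of maximal entropy --- is exactly the paper's approach. The paper centers the horseshoe at the critical point $c$ rather than at your point $q_n$ near $p$: it takes a symmetric interval $I_\vep$ around $c$, observes that $f^k(I_\vep)$ lands near $P$ with length $\asymp\vep^2$, pulls $I_\vep$ back by $l\asymp|\log\vep|$ contracting iterates into $f^k(I_\vep)$, and then removes a small central piece of $I_\vep$ around $c$ to obtain the two Markov intervals for $f^{k+l}$. Your three-phase diameter analysis, with maximal spread $\asymp\lambda^{-n/2}$ achieved near $c$, is essentially the same bookkeeping viewed from the other end of the period.

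There is, however, a genuine error in your Markov-pair step. With $\tilde I_n=[q_n-\xi_n,\,q_n+\xi_n]$ chosen so that both endpoints are fixed by $f^T$, the \emph{left} monotone branch $[q_n-\xi_n,q_n]$ has image $f^T([q_n-\xi_n,q_n])=[p,\,q_n-\xi_n]$, which meets $\tilde I_n$ only at the single point $q_n-\xi_n$; only the right branch covers $\tilde I_n$, so no $2$-horseshoe exists on $\tilde I_n$ as written. (Relatedly, $f^T|_{\tilde I_n}$ is two-to-one only onto $[p,q_n-\xi_n]$, not onto $[p,q_n+\xi_n]$.) The fix is painless: enlarge the base interval past the fixed points, e.g.\ to $[q_n-2\xi_n,\,q_n+2\xi_n]$. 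Since $a\alpha_n^2\xi_n^2\asymp q_n-p\gg\xi_n$, each monotone branch on this larger interval now has image $[p,\,p+4a\alpha_n^2\xi_n^2]\supset[q_n-2\xi_n,\,q_n+2\xi_n]$, and the horseshoe goes through with all your size and diameter estimates unchanged. Alternatively --- and this is what the paper does --- place the base interval at $c$, where the two-to-one covering is automatic from the symmetry of the fold.
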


In the next statement we see with the example of the quadratic map
that the inequality (\ref{rer}), $\forall \vep>0, \
h(f)-h(f,\vep)\leq h_{\loc}(f,\vep)$, which holds for any continuous
dynamical system $(f,M)$, may be quite rough.

\begin{Prop}\label{rrrem}
The quadratic map $f_4$ given by $f_4(x)=4x(1-x)$ for all $x\in
[0,1]$ has a homoclinic tangency for the repulsing fixed point $0$.
In particular we have
$$h_{\loc}(f_4,\vep)\geq O\left(\frac{1}{|\log\vep|}\right),$$
but for any  $\alpha\in(0,1)$,
$$h(f_4)-h(f_4,\vep)\leq o\left(\frac{\vep^\alpha}{|\log\vep|}\right).$$
\end{Prop}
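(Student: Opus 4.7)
The plan is to handle the two inequalities separately. The lower bound $h_{\loc}(f_4,\vep)\geq O(1/|\log\vep|)$ is designed to be an immediate application of Theorem \ref{homone}, so I only need to verify that $f_4$ has a non-degenerate homoclinic tangency. I will take the repelling fixed point $\Lambda=\{0\}$ (where $f_4'(0)=4$) and the critical point $c=1/2$, and check the three conditions: $f_4^2(c)=f_4(1)=0\in\Lambda$; $c\in W^u(\{0\})$, since any neighborhood of $0$ eventually covers $[0,1]$ under iteration of $f_4$; and non-degeneracy $(f_4^2)''(1/2)=32\neq 0$ by direct computation.

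For the upper bound on $h(f_4)-h(f_4,\vep)$ I will prove the much stronger statement that $h(f_4,\vep)=h(f_4)=\log 2$ for all sufficiently small $\vep$, making the $o(\vep^\alpha/|\log\vep|)$ bound automatic. The tool is the Ulam--von Neumann conjugacy $\phi(x)=\sin^2(\pi x/2)$ between the full tent map $T(x)=1-|1-2x|$ and $f_4$, satisfying $\phi\circ T=f_4\circ\phi$ by a short trigonometric identity. Since $\phi'$ vanishes at the endpoints, the inverse $\phi^{-1}(y)=(2/\pi)\arcsin\sqrt{y}$ is globally H\"older of exponent $1/2$: there exists $C>0$ such that $|\phi^{-1}(u)-\phi^{-1}(v)|\leq C|u-v|^{1/2}$. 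Consequently any $(n,\vep)$-spanning set for $f_4$ pulls back through $\phi^{-1}$ to an $(n,C\sqrt{\vep})$-spanning set for $T$, so $h(T,C\sqrt{\vep})\leq h(f_4,\vep)\leq h(f_4)=\log 2$.

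The remaining and main step is to show $h(T,\delta)=\log 2$ for every $\delta<1/2$. This is not automatic since $T$ is not expansive: the symmetric pairs $\{x,1-x\}$ share the same forward orbit, so I must construct exponentially many $(n,\delta)$-separated points by hand. The key observation is that if $S\subset[0,1-\delta]$ is $(n-1,\delta)$-separated then $T^{-1}(S)\subset[0,1]$ is $(n,\delta)$-separated: two preimages of distinct points of $S$ inherit separation from $S$ shifted by one time step, while a pair of siblings $\{y,1-y\}$ with common image $s\in[0,1-\delta]$ satisfies $|y-(1-y)|=1-s\geq\delta$. After accounting for the loss of at most one point when restricting a separated set from $[0,1]$ to $[0,1-\delta]$, this gives the recursion $s_n(T,\delta)\geq 2s_{n-1}(T,\delta)-2$ with base case $s_1(T,\delta)=\lfloor1/\delta\rfloor+1\geq 3$ for $\delta\leq 1/2$, solving to $s_n(T,\delta)\geq 2^{n-1}+2$ and hence $h(T,\delta)\geq\log 2$.

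Combining the estimates yields $h(f_4,\vep)=\log 2$ for $\vep$ below an explicit threshold in terms of the H\"older constant of $\phi^{-1}$, so $h(f_4)-h(f_4,\vep)$ is identically zero near $\vep=0$, making the stated $o(\vep^\alpha/|\log\vep|)$ bound trivially valid. The only real obstacle is the non-expansivity of $T$, circumvented by the sibling identity $|y-(1-y)|=1-Ty$.
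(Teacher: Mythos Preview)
Your treatment of the first inequality is correct: the verification that $c=1/2$ is a non-degenerate homoclinic tangency for the repelling fixed point $0$ is complete, and Theorem \ref{homone} then yields the lower bound on $h_{\loc}(f_4,\vep)$.

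The second part, however, has a genuine gap. Your recursion $s_n(T,\delta)\geq 2s_{n-1}(T,\delta)-2$ rests on the claim that restricting an $(n,\delta)$-separated set from $[0,1]$ to $[0,1-\delta]$ costs at most one point. This is false: an $(n,\delta)$-separated set can place many points in $(1-\delta,1]$, because separation may occur at later times rather than at time $0$. Concretely, for $\delta<1/4$ the points $x_j=1-\delta\,2^{-j}$, $j=1,\dots,k$, all lie in $(1-\delta,1]$, yet $T^i(x_j)=\delta\,2^{i-j}$ for $1\le i\le j$, so each pair $(x_j,x_l)$ with $j<l$ is separated by at least $\delta$ at time $i=j+1$; hence $\{x_1,\dots,x_k\}$ is $(k+2,\delta)$-separated and restricting to $[0,1-\delta]$ loses all $k$ points. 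If one repairs your construction by tracking how many points of the iterated set $S_k=T^{-1}(S_{k-1})\cap[0,1-\delta]$ fall in $[0,2\delta)$, the resulting recursion is $|S_{k+1}|=2|S_k|-|S_{k-m}|$ with $m\sim\log_2(1/\delta)$, whose growth rate is the largest root of $\lambda^{m+1}-2\lambda^m+1=0$, namely $\lambda_m=2-2^{-m}+o(2^{-m})<2$. This yields only $h(T,\delta)\geq\log 2-O(\delta)$, and after the H\"older-$1/2$ conjugacy $h(f_4)-h(f_4,\vep)=O(\sqrt\vep)$, which fails the required $o(\vep^\alpha/|\log\vep|)$ for $\alpha\in[1/2,1)$.

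The paper avoids the conjugacy altogether and works directly with the lap structure of $f_4$. To each $f_4^n$-monotone branch it assigns its itinerary in $\{0,1\}^n$, identifies the specific word of length $p$ coding the branch adjacent to the critical point (whose length is $\vep_p$ with $|\log\vep_p|\sim p\log 2$), and observes that the centers of branches whose codes avoid this word form an $(n,\vep_p)$-separated set. Counting these via the subshift of finite type $Y_p$ gives $h(f_4,\vep_p)\geq h(\sigma,Y_p)=\log(2^p-1)/p=\log 2-\tfrac{1}{p\,2^p}+o(\tfrac{1}{p\,2^p})$, hence $h(f_4)-h(f_4,\vep)=O(\vep/|\log\vep|)$, which is $o(\vep^\alpha/|\log\vep|)$ for every $\alpha<1$.
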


\begin{Rem}The  $2$-full shift is a finite to one extension of the quadratic map $f_4$. It is known that the tail entropy is invariant under such extension, but it is  false for the $\vep$-tail entropy. Indeed the $2$-full shift is expansive, thus $h$-expansive, but according to the previous proposition we have $h^*(f_4,\vep)\geq O\left(\frac{1}{|\log\vep|}\right)$.
\end{Rem}

By the already mentioned result of Kozlovscki, Shen and
van Strien \cite{Koz}  hyperbolic and thus $h$-expansive maps
 form an open and dense set in the $C^r$ topology for any finite $r$. But we do not know what is the Lebesgue typical rate for a one parameter family.

\begin{Que} What is the Lebesgue typical  rate of $h^*(f_a,\vep)$
(or $h_{\loc}(f_a,\vep)$) in the quadratic family $f_a(x)=ax(1-x)$?
\end{Que}

\subsection{Non $h$-expansive $C^r$ robust examples in Newhouse domains for diffeomorphisms in higher dimensions}

  For any  $r\geq 2$, the $C^r$ Newhouse domain is defined
by the closure of $C^r$ diffeomorphisms with homoclinic tangencies.
We prove every map  in an open dense subset of Newhouse domain is
not $h$-expansive and we give an explicit lower bound of the
$\vep$-local entropy:

\begin{main}\label{non EE}
Let $M$ be a compact  smooth Riemannian surface. Assume $f\in \Diff^r(M)$
$(r\geq2)$  has a hyperbolic basic set whose stable and unstable
manifolds are tangent at some point. Then for any $C^r$ neighborhood
$\mathcal{V}$ of $f$ in $\Diff^r(M)$, there exists a $C^r$ open set
$\mathcal{U}\subset \mathcal{V}$ and  a constant $C>0$ such that for
any $f\in \mathcal{U}$,
\begin{eqnarray}\label{large tail entropy }\limsup_{\vep\to
0}h_{\loc}(f,\vep)|\log \vep|> C.\end{eqnarray}Consequently,
everyone in $\mathcal{U}$ is not $h$-expansive.
 \end{main}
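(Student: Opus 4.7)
The strategy is to combine Newhouse's theorem on the persistence of homoclinic tangencies with a small-scale horseshoe construction near the tangency, implementing in dimension two the mechanism underlying Theorem \ref{homone}.

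First, by the classical Newhouse construction, a small $C^r$-perturbation of $f$ inside $\cV$ enlarges the given hyperbolic basic set $\Lambda$ into one whose stable/unstable thicknesses satisfy $\tau^s\tau^u>1$ and for which the tangency between $W^s(\Lambda)$ and $W^u(\Lambda)$ is non-degenerate (quadratic). Newhouse's theorem then yields a $C^r$-open set $\cU\subset\cV$ such that every $g\in\cU$ has a hyperbolic continuation $\Lambda_g$ carrying a quadratic tangency at a point $q_g$ between the stable and unstable manifolds of a single periodic saddle $p_g\in\Lambda_g$; the saddle $p_g$, the tangency $q_g$, the period $\pi$, and the eigenvalues of $Dg^\pi_{p_g}$ all depend continuously on $g\in\cU$ and stay uniformly bounded away from the neutral regime.

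Next, for each $g\in\cU$ and each small $\vep>0$ I would build a horseshoe near $q_g$ at scale $\vep$ via the Palis--Takens return-map construction. In local coordinates near $q_g$ in which $W^s_{\loc}(p_g)$ is the $x$-axis and the branch of $W^u_{\loc}(p_g)$ through $q_g$ is $\{y=ax^2+O(x^3)\}$ with $a\neq 0$, take a parabolic rectangle $R_\vep$ of unstable width $\sim\vep^{1/2}$ and stable height $\sim\vep$ centered at $q_g$. Choosing $N=N(\vep)\sim|\log\vep|$ so that $g^{\pi N}(R_\vep)$ returns, after transiting the linearized neighborhood of $p_g$, to cross $R_\vep$ in two disjoint strips (the two produced by the quadratic fold of the tangency), one obtains a $g^{\pi N}$-invariant compact set $H_\vep\subset R_\vep$ on which $g^{\pi N}$ is topologically conjugate to the full $2$-shift. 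Setting $\Lambda_\vep=\bigcup_{0\leq j<\pi N}g^j(H_\vep)$, this set is $g$-invariant and compact, and
\[ h_{\top}(g|_{\Lambda_\vep})\;\geq\;\frac{\log 2}{\pi N}\;\geq\;\frac{C_1}{|\log\vep|} \]
for some $C_1>0$ independent of $\vep$ and of $g\in\cU$.

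To conclude, I would check that $\Lambda_\vep$ contributes to $h_{\loc}(g,\vep')$ at a scale $\vep'$ not much larger than $\vep$. The key geometric fact --- the surface counterpart of the argument underlying Theorem \ref{homone} --- is that any two orbits of $\Lambda_\vep$ shadow each other within distance $K\vep^{1/2}$ at every time: at the return times $k=j\pi N$ this is the width of the Markov strips ($\sim\vep$), and during the excursion the linearization near $p_g$ expands a separation of size $\vep$ by at most a factor $\lambda^{\pi N}\sim\vep^{-1/2}$, since $N$ was chosen so that $g^{\pi N}$ stretches the thin direction of $R_\vep$ by exactly this factor to close the horseshoe. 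Letting $\mu_\vep$ be the measure of maximal entropy of $g|_{\Lambda_\vep}$, the infinite-dynamical-ball characterization of $h_{\loc}$ recalled in Section 1 yields
\[ h_{\loc}(g,K\vep^{1/2})\;\geq\;h_{\loc}(\mu_\vep,K\vep^{1/2})\;\geq\;h_{\top}(g|_{\Lambda_\vep})\;\geq\;\frac{C_1}{|\log\vep|}, \]
and writing $\vep'=K\vep^{1/2}$ gives $h_{\loc}(g,\vep')|\log\vep'|\geq C>0$ uniformly in $g\in\cU$ along a sequence $\vep'\to 0$, which is the desired conclusion. The main obstacle is the bookkeeping of this last step --- verifying, uniformly in $g\in\cU$, that the horseshoe exists with the claimed entropy, that its orbits shadow one another within $O(\vep^{1/2})$ at every intermediate iterate, and that the Palis--Takens linearization errors do not spoil these estimates. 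The quadratic non-degeneracy of the tangency is precisely what keeps the return time $N$ logarithmic in $\vep^{-1}$ and so makes the rate sharp.
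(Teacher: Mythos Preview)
Your overall strategy---Newhouse persistence plus a small horseshoe at each scale $\vep$---is exactly the paper's, and your entropy/scale bookkeeping in the last two paragraphs is essentially right.  There is, however, a genuine gap in the first step.

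You assert that Newhouse's theorem gives an open set $\cU$ on which every $g$ has a quadratic tangency between $W^s(p_g)$ and $W^u(p_g)$ for a \emph{single} periodic saddle $p_g$ of \emph{bounded} period $\pi$, varying continuously with $g$.  This is false: a homoclinic tangency for a fixed periodic point is a codimension-one condition and does not persist on an open set.  What the thickness argument (Gap Lemma) actually provides on $\cU$ is a tangency between $W^u(x_g)$ and $W^s(y_g)$ for some points $x_g,y_g\in\Lambda_g$ that are generically \emph{not periodic} and that jump discontinuously with $g$.  Without a bounded-period saddle your return-map construction has no anchor: you need to transit from a neighbourhood of $y_g$ back to a neighbourhood of $x_g$ inside $\Lambda_g$, and the time this takes must be controlled.

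The paper fills this gap with a specification-type lemma (Lemma~\ref{bound period}): given $x_1,x_2\in\Lambda_g$ and a scale $\vep$, one finds a periodic point $q\in\Lambda_g$ of period $\tau(q)\asymp|\log\vep|$ whose orbit passes $\vep$-close to both $x_1$ and $x_2$.  One then builds the horseshoe near the tangency using the leaves through $q$ and $g^{i_0}(q)$ rather than through a fixed $p_g$.  The return time becomes $t=T_1\tau(q)+i_0\asymp|\log\vep|$ (your $\pi N$), and the rest of your scale analysis goes through with $a_1\sqrt{d(z_1,z_2)}$ playing the role of your $K\vep^{1/2}$.  So the fix is not to change the horseshoe mechanism but to replace the nonexistent fixed saddle by this $\vep$-dependent periodic shadowing orbit; once you do that, the uniformity in $g\in\cU$ comes from the continuity of thickness (Proposition~\ref{prop3}) and of the hyperbolic constants of $\Lambda_g$, not from continuity of a fixed tangency.
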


This theorem may be considered as a converse of  the result in
\cite{LVY}. Diffeomorphisms $C^1$ far from the set of
diffeomorphisms exhibiting a homoclinic tangency are robustly
$h$-expansive whereas in a $C^r$ $(r\geq 2)$ open dense subset
 of the set of diffeomorphisms with homoclinic tangencies  all systems are non $h$-expansive.

\begin{Rem}
The previous lower bound on the local entropy also holds on Newhouse
intervals in any one-parameter family which unfolds a quadratic homoclinic tangency generically,
 for example in the conservative H\'enon family $(x,y)\mapsto (y,-x+a-y^2)$.
\end{Rem}

 Since  polynomial maps are dense in the space of $C^r$ maps on any open bounded set of
$\mathbb{R}^m$
 (see also Proposition \ref{rrrem} and the above Remark for explicit examples),
  we have the following Corollary which contradicts Conjecture 6.2 of Yomdin \cite{Yom91} :

\begin{maincor}\label{noncor EE}
There exist non $h$-expansive polynomial maps  satisfying (\ref{large
tail entropy }).
 \end{maincor}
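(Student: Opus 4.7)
The plan is to combine Theorem \ref{non EE} with the already mentioned density of polynomial maps, using the polynomial Hénon family as the explicit witness. The Remark immediately following Theorem \ref{non EE} asserts that the lower bound (\ref{large tail entropy}) persists on Newhouse intervals inside any one-parameter family generically unfolding a quadratic homoclinic tangency, and cites the conservative Hénon family $H_a(x,y)=(y,-x+a-y^2)$ as a concrete example. So the proof essentially reduces to invoking this Remark.

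First, I would recall the classical Palis--Takens--Newhouse theory: as $a$ varies, $H_a$ generically unfolds a quadratic homoclinic tangency associated with a hyperbolic basic set, and the bifurcation produces a nontrivial Newhouse interval $I\subset\mathbb{R}$ of parameters for which the tangency phenomenon is robust. By the Remark, for every $a\in I$ the diffeomorphism $H_a$ satisfies
\begin{equation*}
\limsup_{\vep\to 0}h_{\loc}(H_a,\vep)\,|\log\vep|>C
\end{equation*}
for some $C>0$, and is therefore not $h$-expansive. Since the maps $H_a$ are polynomial in $(x,y)$, this already yields the desired polynomial examples.

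As a completely independent (and more general) route, one could instead take any $C^r$ diffeomorphism $f_0$ of a compact surface satisfying the hypotheses of Theorem \ref{non EE}, obtain the $C^r$ open set $\mathcal{U}$ from that theorem, and use the fact that polynomial maps are $C^r$-dense on any bounded open subset of $\mathbb{R}^m$ to exhibit a polynomial element of $\mathcal{U}$ (after reading off coordinates in a chart containing the basic set together with its relevant stable and unstable manifolds). Every element of $\mathcal{U}$ satisfies (\ref{large tail entropy}), so such a polynomial map gives the conclusion.

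The main obstacle is the mismatch between the two natural settings: Theorem \ref{non EE} is stated for diffeomorphisms of a compact Riemannian surface, whereas polynomial maps naturally live on $\mathbb{R}^2$. The Hénon route finesses this by invoking the Remark directly, since the proof of (\ref{large tail entropy}) is local and only depends on the dynamics in a neighborhood of the basic set and its tangency, which for $H_a$ sits inside a compact invariant region. The alternative density route requires one to observe that a $C^r$ approximation by polynomials on a compact coordinate chart containing the basic set suffices, because the open set $\mathcal{U}$ in Theorem \ref{non EE} is defined through a $C^r$ topology that is local near that set. Either way, no additional work beyond Theorem \ref{non EE} (and its Remark) and standard Weierstrass-type approximation is needed.
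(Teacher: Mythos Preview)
Your proposal is correct and takes essentially the same approach as the paper: the paper's own justification (the sentence immediately preceding the corollary) invokes exactly your two routes---density of polynomial maps in $C^r$ on bounded open sets of $\mathbb{R}^m$ combined with the open set $\mathcal{U}$ from Theorem~\ref{non EE}, together with the H\'enon Remark and Proposition~\ref{rrrem} as explicit examples. You actually supply more care than the paper does, in that you flag and resolve the compact-surface versus $\mathbb{R}^2$ mismatch, which the paper leaves implicit.
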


\begin{Rem}
Corollary \ref{noncor EE} shows that analytic maps may have
exponential dynamical  complexity in any scale. However, in the
setting of geometry, any $l$-dimensional analytic manifold $A\subset
\mathbb{R}^m$  always has no exponential complexity in any scale,
due to the property that for a constant $C(A)$,  for any cube
$Q_t^m$ of the size $t$ in $\mathbb{R}^m$ and for any affine $L:
\mathbb{R}^m\to \mathbb{R}^m$,
 $$\Vol(L(A)\cap Q_t^m)\leq C(A)t^l,$$
see Corollary 6.4 of \cite{Yom91}.
\end{Rem}

\begin{Que}
In the previous section we establish in dimension one that the rate of convergence of polynomials was in $1/|\log\vep|$. Does it hold true in higher dimensions?
\end{Que}

\subsection{$h$-expansiveness for endomorphisms on  homogenous space}
We observed in the previous section that polynomial maps are not $h$-expansive in general, however we will see now it is always the case of affine maps.

Let $G$ be a real Lie group and let $\Lambda$ be a discrete cocompact subgroup (usually called a uniform lattice).
 The quotient $G/\Lambda$  inherits from $G$ a structure of smooth manifold. It is well known that one may endow $G$ with a
biinvariant Riemannian  metric  as $G$ admits a uniform lattice. It induces a left invariant Riemannian metric on  $G/H$.  Endowed with this metric we call the quotient $G/\Lambda$  a compact  homogenous Riemanian manifold.


A map $\phi:G/\Lambda\rightarrow G/\Lambda$ is called an endomorphism of $G/\Lambda$ when $\phi$
is the map induced by an element $\underline{g}\in G$ and a morphism of group $\Phi:G\rightarrow G$  with $\Phi(\Lambda)\subset \Lambda$ as follows :
$$\phi(g\Lambda)=\underline{g}\Phi(g)\Lambda.$$

In this setting  Bowen \cite{Bow71} proved that the topological
entropy of $\phi$ is given by $\sum_i\log^+|\lambda_i|$ where
$\lambda_i$ are the eigenvalues of $d\Phi:T_eG\rightarrow T_eG$.
For endomorphisms on Lie groups, $h$-expansiveness has been
established by Bowen in \cite{Bow72b}. Here we show
$h$-expansiveness for all endomorphisms on homogenous manifolds.

\begin{main}\label{homogenous}
Let $G/\Lambda$ be a compact  homogenous Riemanian manifold.
 Then any endomorphism of $G/\Lambda$ is $h$-expansive.
\end{main}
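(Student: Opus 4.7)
My plan is to adapt Bowen's proof of $h$-expansiveness of endomorphisms of compact Lie groups (\cite{Bow72b}) to the homogeneous setting, using two key facts: the Riemannian metric on $G/\Lambda$ is left-invariant (so left translations $L_a:x\mapsto ax$ are isometries), and a Lie group morphism $\Phi$ linearizes exactly in the exponential chart, $\Phi\circ\exp=\exp\circ A$ with $A:=d\Phi_e$. Writing $\phi=L_{\underline g}\circ\Phi_*$ and iterating gives $\phi^n=L_{a_n}\circ\Phi_*^n$ for some $a_n\in G$; since each $L_{a_n}$ is an isometry, the dynamical balls of $\phi$ and $\Phi_*$ coincide and $h^*(\phi,\vep)=h^*(\Phi_*,\vep)$ for every $\vep>0$. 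The intertwining $\Phi_*\circ L_g=L_{\Phi(g)}\circ\Phi_*$ combined with left-invariance moreover shows that $h(\Phi_*, B_\infty(\Phi_*, g\Lambda, \vep))$ does not depend on $g$, so it is enough to prove $h(\Phi_*, B_\infty(\Phi_*, e\Lambda, \vep_0))=0$ for some $\vep_0>0$.

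I then choose $\vep_0>0$ smaller than the injectivity radius of $\exp:\mathfrak g\to G$ and half the minimal displacement $\inf_{\lambda\in\Lambda\setminus\{e\}} d_G(\lambda,e)$, so the $\vep_0$-ball around $e\Lambda$ in $G/\Lambda$ lifts isometrically to $B_{\vep_0}(e)\subset G$ and distinct $\Lambda$-translates of this ball are pairwise disjoint. Introduce $V_b:=\{v\in\mathfrak g:\sup_n\|A^n v\|<\infty\}$; continuity and bilinearity of the Lie bracket together with $A^n[v,w]=[A^n v,A^n w]$ show that $V_b$ is an $A$-invariant Lie subalgebra, and $\|A^n|_{V_b}\|$ is uniformly bounded in $n$ because the modulus-one eigenspaces of $A|_{V_b}$ are semisimple. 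The key geometric claim is the inclusion
\[
B_\infty(\Phi_*,e\Lambda,\vep_0)\subset \pi\bigl(\exp(V_b)\cap B_{\vep_0}(e)\bigr),
\]
where $\pi:G\to G/\Lambda$ is the quotient map. Equivalently, the unique lift $v\in B_{\vep_0}(0)\subset\mathfrak g$ of a point $y\Lambda\in B_\infty$ must lie in $V_b$: if $v$ had a non-zero component outside $V_b$ (a generalized eigenspace with eigenvalue of modulus $>1$, or the non-semisimple part of a modulus-one eigenspace) then $\|A^n v\|$ would tend to infinity, and I would argue that $\exp(A^n v)\Lambda$ cannot remain within $\vep_0$ of $e\Lambda$ for all $n$. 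In the toral case $G=\RR^m$, $\Lambda=\ZZ^m$ this is a Weyl-type equidistribution statement; for general $G$ one proceeds by induction along a composition series of $G$, using cocompactness of $\Lambda$ to transfer the recurrence argument from the abelian/nilpotent pieces.

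Once the inclusion is established, $\Phi_*$ restricted to $B_\infty(\Phi_*,e\Lambda,\vep_0)$ is semi-conjugated through $\pi\circ\exp$ to the linear map $A$ acting on the bounded set $V_b\cap B_{\vep_0}(0)$; the uniform bound on $\|A^n|_{V_b}\|$ implies that a minimal $(n,\delta)$-spanning set of this bounded subset has cardinality independent of $n$, whence
\[
h\bigl(\Phi_*,B_\infty(\Phi_*,e\Lambda,\vep_0)\bigr)=\lim_{\delta\to 0}\limsup_{n\to\infty}\tfrac{1}{n}\log r_n(\Phi_*,B_\infty,\delta)=0,
\]
and $h$-expansiveness follows. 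The principal difficulty is the inclusion in the previous paragraph: in Bowen's classical case $\Lambda=\{e\}$ it is immediate since ``$\vep_0$-close to $\Lambda$'' means ``$\vep_0$-close to $e$'' in $G$; here the lattice can in principle let $A$-orbits in $\mathfrak g$ with unstable or non-semisimple-neutral components project to forward orbits in $G/\Lambda$ remaining near $e\Lambda$, and ruling this out requires genuine input from homogeneous dynamics as just indicated.
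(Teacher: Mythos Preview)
Your reduction to the case $\underline g=e$ and to the single basepoint $e\Lambda$ via left-invariance is correct and matches the paper. The real divergence is at what you call ``the principal difficulty,'' the inclusion
\[
B_\infty(\Phi_*,e\Lambda,\vep_0)\subset \pi\bigl(\exp(V_b)\cap B_{\vep_0}(e)\bigr).
\]
You believe this needs Weyl-type equidistribution or an induction along a composition series, and you leave it unproved. In fact it follows from a one-line inductive argument that uses only discreteness of $\Lambda$ and the fact that $\Phi$ (equivalently $A$) expands by at most a fixed factor in a \emph{single} step. Let $D:=\inf_{\lambda\in\Lambda\setminus\{e\}}d_G(e,\lambda)>0$, fix $L\geq 1$ a bi-Lipschitz constant for $\exp$ on a ball of radius $D$ around $0\in\mathfrak g$, and choose $\vep_0<D/(2L\|A\|)$ and $\vep_1\leq\vep_0/L$. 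If $y\Lambda\in B_\infty(\Phi_*,e\Lambda,\vep_1)$, write $y\Lambda=\psi(v)$ with $v\in B(0,\vep_0)$; suppose inductively that $A^kv\in B(0,\vep_0)$. Then $\exp(A^{k+1}v)=\Phi(\exp(A^kv))\in B(e,L\|A\|\vep_0)\subset B(e,D/2)$, while $\pi(\exp(A^{k+1}v))=\Phi_*^{k+1}(y\Lambda)\in B(e\Lambda,\vep_1)$ forces $\exp(A^{k+1}v)\in B(\lambda,\vep_1)$ for some $\lambda\in\Lambda$. The triangle inequality gives $d(e,\lambda)<D/2+\vep_1<D$, hence $\lambda=e$ and $A^{k+1}v\in B(0,L\vep_1)\subset B(0,\vep_0)$. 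Thus $\sup_k\|A^kv\|\leq\vep_0$ and $v\in V_b$. No homogeneous dynamics is required: the lattice simply cannot interfere because one iterate of $A$ does not carry the small ball far enough to meet a nontrivial coset.

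With this gap filled your argument is complete, but it differs from the paper's. The paper never introduces $V_b$ or invokes any spectral information about $A$. Instead, after the same identification of $B_n(\phi,e\Lambda,\vep)$ with $\psi(B_n(A,0,\vep))$ (which is exactly the inductive step above, run for finite $n$), it covers the linear Bowen ball $B_n(A,0,\vep)\subset\mathfrak g$ by the cells of a hyperplane arrangement $\bigcap_{k<n}A^{-k}\mathfrak g'$, where $\mathfrak g'$ is the complement of finitely many axis-parallel hyperplanes at spacing $\delta/L\sqrt m$. Since $N$ hyperplanes cut $\RR^m$ into at most $CN^m$ pieces, the number of cells is polynomial in $n$, and each cell meeting the Bowen ball lies in an $(n,\delta)$-Bowen ball. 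Pushing forward by $\psi$ gives $r_n(\phi,B_n(\phi,e\Lambda,\vep),\delta)\leq \mathrm{poly}(n)$, hence entropy zero. This combinatorial argument is more robust (it works uniformly without splitting into stable/center/unstable parts) and is the standard device for affine maps; your approach gives slightly more structural information about $B_\infty$ but needs the extra observation that $\|A^n|_{V_b}\|$ is bounded.
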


\subsection{Arbitrarily slow convergence for $C^\infty$ maps and sharpness of Theorem \ref{gen}}
For general $C^\infty$ maps the convergence to zero of
the $\vep$-local entropy may be arbitrarily slow.

\begin{main}\label{thm2}Let $M$ be a compact smooth Riemannian manifold of dimension larger than one
 (resp. of dimension one). Let $f:M\rightarrow M$ be a $C^\infty$ diffeomorphism (resp. non invertible map)
 with an interval of homoclinic tangencies. Then,  for any function $a: (0,1)\to \mathbb{R}^+$ with $a(t)\to 0$ as $t\to 0$
 and for any $C^\infty$-neighborhood $\mathcal{U}$ of $f$, there
 exists a diffeomorphism (resp. non invertible map) $f_a\in \mathcal{U}$ on $M$ and $\zeta(f_a)>0$
 such that $$h_{\loc}(f_a,\vep)\geq a(\vep)\quad \text{for any}\quad \vep\in(0,\zeta(f_a)). $$
\end{main}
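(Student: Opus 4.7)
The plan is to build $f_a$ by successively inserting disjoint $C^\infty$-small perturbations along the given interval of homoclinic tangencies, each producing a small localized horseshoe whose entropy and spatial scale are matched to the prescribed function $a$. The construction follows the Misiurewicz--Downarowicz--Buzzi philosophy \cite{Mis73,Buz97,DN05}; the novelty is that all perturbations must fit in a single $C^\infty$-neighborhood of $f$, which forces a diagonal control of every $C^k$-norm simultaneously. Since $\tilde a(\vep):=\sup_{0<t\leq\vep}a(t)$ is non-increasing, dominates $a$, and still tends to $0$, we may and do assume that $a$ itself is non-increasing.

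Fix sequences $\vep_n\searrow 0$ and $r_n\ll\vep_n$ to be adjusted in the course of the construction. The interval of tangencies provides an abundance of pairwise disjoint neighborhoods $V_n$ of diameter $r_n$, each containing a non-degenerate tangency of $f$ and accumulating on a single tangency point $p_\infty$; pick them small enough that the forward $f$-orbits of the $V_n$ remain mutually disjoint up to a sufficiently long time $N_n$ to be specified.

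Inside each $V_n$, apply the classical local surgery that uses the non-degenerate tangency to produce, under a $C^\infty$-perturbation $\psi_n$ supported in $V_n$, a two-branch horseshoe $\Lambda_n$ of period $N_n$ whose topological entropy is at least $a(\vep_n)$; this only requires $N_n\sim \log 2/a(\vep_n)$, which is finite for each $n$ because $a(\vep_n)>0$. After possibly shrinking $V_n$ one may arrange the whole orbit of $\Lambda_n$ under $f_a:=f+\sum_m\psi_m$ to remain in an $\vep_n$-neighborhood of a reference point $x_n\in\Lambda_n$, so that $\Lambda_n\subset B_\infty(f_a,x_n,\vep_n)$. The $C^0$-amplitude $\delta_n$ of $\psi_n$ can be taken arbitrarily small, while $\|\psi_n\|_{C^k}\lesssim \delta_n r_n^{-k}$. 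Choosing $\delta_n$ to decay faster than any polynomial in $r_n^{-1}$ (for example $\delta_n=\exp(-1/r_n)$) makes $\sum_n\|\psi_n\|_{C^k}$ summable for every $k$, so $f_a$ is $C^\infty$, and by starting with the $r_n$ small enough we place $f_a\in\mathcal{U}$.

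The main obstacle is precisely this simultaneous $C^\infty$-control: localizing the horseshoe at scale $\vep_n$ forces $r_n$ small, which blows up every high derivative of $\psi_n$ like $r_n^{-k}$, and the amplitudes must be shrunk enough in every $k$ to compensate. What saves the argument is the hypothesis $a(\vep)\to 0$, which permits the required horseshoe entropy, and hence the amplitude $\delta_n$, to be arbitrarily small. Once $f_a$ is built, set $\zeta(f_a):=\vep_1$; for $\vep\in(0,\zeta(f_a))$ let $n(\vep):=\min\{n:\vep_n\leq\vep\}$. The Bernoulli measure of maximal entropy $\mu_{n(\vep)}$ on the horseshoe $\Lambda_{n(\vep)}$ satisfies $\mu_{n(\vep)}(\Lambda_{n(\vep)})=1$, $\Lambda_{n(\vep)}\subset B_\infty(f_a,x_{n(\vep)},\vep)$, and $h(\mu_{n(\vep)})\geq a(\vep_{n(\vep)})\geq a(\vep)$; the infinite-dynamical-ball description of $h_{\loc}$ recalled in the introduction then yields $h_{\loc}(f_a,\vep)\geq a(\vep)$, as desired.
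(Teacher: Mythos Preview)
There is a genuine gap: for a two-branch horseshoe obtained by unfolding a homoclinic tangency, the period $N_n$, the unfolding amplitude $\delta_n$, and the radius $\vep_n$ of the dynamical ball containing the horseshoe are rigidly coupled, and the resulting local entropy is forced to be of order $1/|\log\vep|$. Your construction therefore cannot reach an arbitrary $a(\vep)$.

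More precisely, once you prescribe the period $N_n\sim\log 2/a(\vep_n)$, the homoclinic geometry fixes the scale of the horseshoe: a point must travel from the tangency region to a neighborhood of $p$ and back in exactly $N_n$ steps, which forces $|\log\delta_n|\simeq\lambda_u(p)\,N_n$. In particular $\delta_n$ is \emph{not} a free parameter, and your choice $\delta_n=\exp(-1/r_n)$ is incompatible with the existence of a horseshoe of the prescribed period. Moreover, the estimate $\diam(g^n L')\leq C_1\sqrt{\delta_n}$ from Section~\ref{robust} shows that the smallest dynamical ball containing this horseshoe has radius $\vep_n\simeq\sqrt{\delta_n}$. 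Combining the two relations gives $a(\vep_n)=\log 2/N_n\simeq C/|\log\vep_n|$, which is exactly the rate in Theorems~\ref{homone} and~\ref{non EE}; two-branch horseshoes cannot do better. The step ``after possibly shrinking $V_n$ one may arrange $\Lambda_n\subset B_\infty(f_a,x_n,\vep_n)$'' is where the argument breaks: shrinking $V_n$ increases the period and destroys the entropy lower bound.

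The paper avoids this obstruction by using horseshoes with a \emph{growing} number of branches. Along a subinterval of length $\vep$ in the tangency curve it inserts a sinusoidal snake with $N_{a,\vep}=\lceil 1/\vep\rceil$ oscillations; this produces an $N_{a,\vep}$-branch horseshoe of period $P_{a,\vep}=|\log\vep|/a(\vep)$ inside an $\vep$-dynamical ball, with entropy $\log N_{a,\vep}/P_{a,\vep}=a(\vep)$ for any prescribed $a$. The amplitude $M_{a,\vep}$ is then determined by the crossing condition $M_{a,\vep}e^{\lambda_u(p)P_{a,\vep}}=\vep$, and the key computation $\|f_\vep\|_r\leq C_r(1/\vep)^{-\lambda_u(p)/a(\vep)+2r-1}\to 0$ (for every fixed $r$, because $a(\vep)\to 0$) is what makes the total perturbation $C^\infty$-small. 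Letting the number of branches tend to infinity with $1/\vep$ is the missing idea in your outline.
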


Moreover one can ensure this perturbation $f_a$ to be volume
preserving when this is the case of $f$.  Gonchenko, Turaev and
Shilnikov \cite{Turaev} have shown that volume preserving surface
diffeomorphisms with such an interval of  tangencies are $C^\infty$
dense in Newhouse domains. Therefore we get as a corollary:

\begin{maincor}\label{thm22}Let $M$ be a compact smooth Riemannian surface.
 For any function $a: (0,1)\to \mathbb{R}^+$ with $a(t)\to 0$ as $t\to 0$, there exists a $C^\infty$
 dense subset $ \cF_a$ of volume preserving diffeomorphisms in   $C^{\infty}$  Newhouse domain, such that we have for all $f\in \cF_a$
and for all $\vep$ small enough, $0\leq \vep <\zeta(f)$,
$$ h_{\loc}(f,\vep)\geq a(\vep). $$
\end{maincor}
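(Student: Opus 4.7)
The plan is to combine Theorem~\ref{thm2} with the $C^\infty$ density result of Gonchenko, Shilnikov and Turaev \cite{Turaev} inside the class of volume preserving surface diffeomorphisms. Denote by $\Diff^\infty_\omega(M)$ the space of volume preserving $C^\infty$ diffeomorphisms of $M$, and let $\mathcal{N}_\omega$ be the intersection of $\Diff^\infty_\omega(M)$ with the $C^\infty$ Newhouse domain. To prove density of $\cF_a$ in $\mathcal{N}_\omega$, fix any $f_0\in\mathcal{N}_\omega$ and any $C^\infty$ neighborhood $\cU$ of $f_0$ in $\Diff^\infty_\omega(M)$.

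First I would apply \cite{Turaev}: inside $\mathcal{N}_\omega$ the set of volume preserving diffeomorphisms admitting an \emph{interval} of homoclinic tangencies is $C^\infty$ dense, so we can pick $g\in\cU\cap\Diff^\infty_\omega(M)$ possessing such an interval of tangencies. Choose a $C^\infty$ neighborhood $\cU'$ of $g$ satisfying $\cU'\subset\cU$.

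Next I would invoke Theorem~\ref{thm2} in its volume preserving strengthening, explicitly asserted in the remark preceding the statement of Corollary~\ref{thm22}: since $g$ is volume preserving and carries an interval of homoclinic tangencies, the perturbation $f_a$ produced by Theorem~\ref{thm2} from $g$ and from the prescribed rate $a$ can be chosen in $\Diff^\infty_\omega(M)\cap\cU'$, and it satisfies $h_{\loc}(f_a,\vep)\geq a(\vep)$ for every $0<\vep<\zeta(f_a)$. Define
$$\cF_a=\bigl\{\,f_a\in\Diff^\infty_\omega(M):\ f_a \text{ is produced as above from some } g \text{ with an interval of tangencies}\bigr\}.$$
By construction $\cF_a\cap\cU\neq\emptyset$ for every $C^\infty$ neighborhood $\cU$ of every $f_0\in\mathcal{N}_\omega$, which is exactly the $C^\infty$ density of $\cF_a$ in the volume preserving Newhouse domain, and each element of $\cF_a$ obeys the required pointwise lower bound on $h_{\loc}(\cdot,\vep)$.

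The only delicate issue is that both perturbation arguments must respect the volume form simultaneously: we rely on the volume preserving version of \cite{Turaev} to reach $g$ with an interval of tangencies, and on the volume preserving refinement of Theorem~\ref{thm2} (noted in the paragraph immediately after its statement) to sharpen the local entropy lower bound without leaving $\Diff^\infty_\omega(M)$. Once these two refinements are granted, the corollary follows by the two-step approximation above; no further estimates are needed.
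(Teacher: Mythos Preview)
Your proposal is correct and follows essentially the same route as the paper: density of volume preserving diffeomorphisms with an interval of homoclinic tangencies in the Newhouse domain via \cite{Turaev}, followed by the volume preserving version of Theorem~\ref{thm2} to perturb into $\cF_a$. The paper presents this argument in the paragraph immediately preceding the corollary rather than as a separate proof, but the content is the same two-step approximation you outline.
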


We also prove the estimate obtained in Theorem \ref{gen} is sharp in the following sense.

\begin{main}\label{exa}
Let $M$ be a compact smooth manifold of dimension larger than one (resp. of dimension one). There exists a smooth metric on $M$ such that the following holds.

 For any  nondecreasing function $a: (0,1)\to \mathbb{R}^+$ with $1/a(e^{-.})$ concave
 on   $(0,+\infty)$,  $a(t)\to 0$ as $t\to 0$ and $a(t)\geq  t^{1/7}$ for all $t\in (0,1)$,
 there exists a logarithmic convex weight $\mathcal{M}=(M_k)_k$ satisfying  $ \frac{\log M_0}{G_\cM(3|\log\vep|)}\leq a(\vep)$ for all $\vep>0$,
 with the following property.

For any $\vep$ small enough, $0<\vep<\zeta(M,a)$, there exists
$f_\vep\in \Diff_V^{\mathcal{M}}(M)$ (resp. $f_\vep\in
C_V^{\mathcal{M}}(M)$) with
$$h^*(f_\vep,\vep)\geq a(\vep)\geq \frac{\log M_0}{G_\cM(3|\log\vep|)}.$$
\end{main}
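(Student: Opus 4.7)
The plan splits naturally into designing the weight $\cM$ from the rate function $a$ and, for each small $\vep$, constructing a map $f_\vep\in V^\cM$ which witnesses $h^*(f_\vep,\vep)\geq a(\vep)$.

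For the weight, set $\alpha(x):=1/a(e^{-x})$, which by hypothesis is nondecreasing, concave on $(0,\infty)$, tends to $\infty$ at $\infty$, and is dominated by $e^{x/7}$. Fix $M_0$ sufficiently large and for $k\geq 1$ define
$$\log M_k:=\log M_0+Ck\,\alpha^{-1}(k/\log M_0)$$
for an appropriate absolute constant $C\in(0,3)$. Since $\alpha$ is concave and nondecreasing, $\alpha^{-1}$ is convex and nondecreasing; the product $k\mapsto k\,\alpha^{-1}(k/\log M_0)$ is then convex, and $(M_k)_k$ is logarithmic convex. Evaluating at $L:=\lceil\log M_0/a(\vep)\rceil$ and using the identity $\alpha^{-1}(1/a(\vep))=|\log\vep|$ yields $a^\cM_L\leq 3|\log\vep|$, hence $G_\cM(3|\log\vep|)\geq L$ and therefore $\log M_0/G_\cM(3|\log\vep|)\leq a(\vep)$, which is the second inequality of the statement.

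For the map, following the Misiurewicz--Buzzi examples recalled in the introduction, I construct for each small $\vep$ a map $f_\vep$ supporting a horseshoe of $N_\vep:=\lceil e^{a(\vep)}\rceil$ symbols contained in a set of diameter at most $\vep$, realized as the return-map dynamics of $f_\vep^{T_\vep}$ for a suitable return time $T_\vep$. In the diffeomorphism setting the horseshoe comes from a bump perturbation near a homoclinic tangency of a hyperbolic periodic orbit (in the spirit of Theorem \ref{non EE}); in the non-invertible one-dimensional case from a smooth multimodal gadget near a critical point. Since all $N_\vep^n$ cylinders of the horseshoe remain in a dynamical ball of radius $\vep$ and are pairwise $\delta$-separated at some later iterate for arbitrary $\delta>0$, one obtains $h^*(f_\vep,\vep)\geq\log N_\vep\geq a(\vep)$. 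The crucial verification is then $\|D^{j+1}f_\vep\|\leq M_j$ for every $j\geq 0$. By choosing $T_\vep$ so that $\log N_\vep/T_\vep\sim a(\vep)$, the composite expansion of the horseshoe is spread across many iterates, keeping the pointwise derivatives of $f_\vep$ bounded independently of $\vep$; estimating $\|D^j(f_\vep^{T_\vep})\|$ via Fa\`a di Bruno's formula, the binding constraint occurs at order $j\sim L=\log M_0/a(\vep)$, where the derivative bounds match $M_j$ up to constants by the defining equation of $\cM$. For $j\gg L$ the superexponential growth of $M_j$ absorbs everything, and for $j\ll L$ the inequality follows from logarithmic convexity of $\cM$ interpolating between the bound $\|Df_\vep\|\leq M_0$ and the critical-order estimate.

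The principal obstacle is making the construction of $f_\vep$ truly $\vep$-uniform: a one-step sinusoidal horseshoe at scale $\vep$ already forces $\|D^2 f_\vep\|\gtrsim N_\vep^2/\vep\to\infty$ as $\vep\to 0$, forbidding any fixed $M_1$. The remedy is precisely the Misiurewicz accumulation idea of taking horseshoes with growing return times $T_\vep$ near a tangency; combined with the concavity of $1/a(e^{-\cdot})$ (which makes $\cM$ logarithmic convex) and the lower bound $a(\vep)\geq\vep^{1/7}$ (which limits the admissible return times $T_\vep$ geometrically), this is what closes the estimate and determines the threshold $\zeta(M,a)$.
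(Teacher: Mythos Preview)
Your construction of the weight $\cM$ is essentially the paper's: with $\alpha(x)=1/a(e^{-x})$ one has $\alpha^{-1}(k/\log M_0)=\log\bigl(1/a^{-1}(\log M_0/k)\bigr)$, and the paper sets $M_k=M_0\bigl(1/a^{-1}(\log M_0/k)\bigr)^k$, so your formula with $C=1$ is exactly theirs. The verification of $\log M_0/G_\cM(3|\log\vep|)\leq a(\vep)$ is also correct.

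The dynamical construction, however, has a genuine gap. Your choice $N_\vep=\lceil e^{a(\vep)}\rceil$ yields $N_\vep=1$ for all small $\vep$ (since $a(\vep)\to 0$), so there is no horseshoe. More importantly, the entropy contribution of a horseshoe with $N$ branches and return time $T$ to $h^*(f_\vep,\vep)$ is $\log N/T$, not $\log N$; your displayed inequality $h^*(f_\vep,\vep)\geq\log N_\vep$ omits the division by the return time, and once restored the two errors no longer cancel. Finally, the object whose derivatives must be bounded by $(M_j)_j$ is $f_\vep$ itself, not its iterate $f_\vep^{T_\vep}$; applying Fa\`a di Bruno to the $T_\vep$-fold composition is irrelevant to membership in $V^\cM$.

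The mechanism you are missing is the role of the \emph{amplitude} of the sinusoidal perturbation. The paper perturbs a fixed background diffeomorphism $T$ (with a homoclinic tangency at a hyperbolic fixed point $p$) by a bump $\theta_\vep$ built from a single sinusoid with $N_\vep\approx 1/\vep$ oscillations over an interval of length $\vep$ and amplitude $M_{a,\vep}=\vep\, e^{-\lambda_u(p)P_\vep}$ with $P_\vep=|\log\vep|/a(\vep)$. The tiny amplitude forces the return time of the resulting horseshoe to be $P_\vep$ (the time needed for the unstable dynamics near $p$ to stretch the perturbation back to size $\vep$), giving entropy $\log N_\vep/P_\vep\approx a(\vep)$. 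Crucially, the same tiny amplitude controls the derivatives of the single map $f_\vep=\theta_\vep\circ T$: one gets $\|f_\vep\|_r\lesssim M_{a,\vep}(N_\vep/\vep)^r\approx(1/\vep)^{2r-1-\lambda_u(p)/a(\vep)}$, which for each fixed $r$ tends to $0$ as $\vep\to 0$ and matches $M_r$ at the critical order $r\sim\lambda_u(p)/a(\vep)$. Fa\`a di Bruno enters only to pass from the bounds on $\theta_\vep$ to bounds on $\theta_\vep\circ T$, a single composition. Your remark that ``the composite expansion is spread across many iterates, keeping the pointwise derivatives of $f_\vep$ bounded'' points in the right direction, but the actual lever is the amplitude $M_{a,\vep}$, not a free choice of $T_\vep$.
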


\begin{Rem} The condition of concavity of $ 1/a(e^{-.}):(0,+\infty) \to \mathbb{R}^+$ is not very restrictive. Indeed for any nondecreasing bounded  function $a_0: (0,1)\to \mathbb{R}^+$ with $\lim_{\vep \to 0}a_0(\vep)=0$
there exists a  nondecreasing function $a_1: (0,1)\to \mathbb{R}^+$
with $ \lim_{\vep \to 0}a_1(\vep)=0$ satisfying this condition and
$a_1\geq a_0$. This follows easily from the fact that any
nondecreasing function $f:(0,+\infty)\to \mathbb{R}^+$ with $\lim_{x
\to 0} f(x)>0$ and $\lim_{x \to +\infty} f(x)=+\infty$  is larger
than a concave nondecreasing function $g:(0,+\infty)\to
\mathbb{R}^+$ satisfying also $\lim_{x \to 0} g(x)>0$ and $\lim_{x
\to +\infty} g(x)=+\infty$.
\end{Rem}

\subsection{Modulus of upper semicontinuity of the topological entropy}
 We state now, in the same spirit
of \cite{Yom91}, how our uniform estimates on the $\vep$-local entropy may be used to explicit a modulus of  continuity of the topological entropy  for the $C^0$ topology.

\begin{Prop}\label{mod} Let $f\in C^0(M)$ and let $G$ be  a subset of $C^0(M)$ such that
 $h^G_{\loc}(\vep):=\sup_{g\in G}h_{\loc}(g,\vep)$ goes to zero when $\vep$
goes to zero and $$M_0(G):=\sup_{g\in
G}\max(\|Dg\|_\infty,2)<\infty.$$ Let $p_\vep$ be the least integer
satisfying
 $\frac{1}{p_\vep}\log r_{p_\vep}(f, \vep/4)-h(f,\vep/4)\leq h^G_{\loc}(\vep)$.
  Then for any $\vep$ and for any $g\in G$ with $d(g, f):=\sup_{x\in M}d(gx,fx)\leq \vep$  we
  have
$$h(g)\leq h(f)+2h^G_{\loc}(N(\vep))$$
where $N(\vep)$ denotes the inverse function of $\vep
\mapsto \frac{\vep}{4} M_0(G)^{-p_\vep}$, i.e. $N(\vep)$ is the smallest positive real number such that $\frac{N(\vep)}{4}M_0(G)^{-p_{N(\vep)}}=\vep$.
\end{Prop}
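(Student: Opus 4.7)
The plan is to compare spanning sets for $g$ and $f$ at the scale $\eta:=N(\vep)$ via a Lipschitz tracking argument, and then to invoke Newhouse's inequality $|h(g)-h(g,\delta)|\le h_{\loc}(g,\delta)$ to absorb the residual error into the local entropy. By the definition of $N$, the scale $\eta=N(\vep)$ satisfies $\vep\, M_0(G)^{p_\eta}=\eta/4$.

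A straightforward induction on $i$, based on $d(g,f)\leq\vep$ and the Lipschitz bound $\|Dg\|_\infty\leq M_0(G)$, yields for every $y\in M$ the tracking estimate
$$d(g^i y,f^i y)\leq \vep\,\frac{M_0(G)^i-1}{M_0(G)-1}\leq \vep\, M_0(G)^{p_\eta}=\frac{\eta}{4}, \qquad 0\leq i\leq p_\eta,$$
where the middle step uses $M_0(G)\geq 2$. Consequently, for a minimal $(p_\eta,\eta/4)$-spanning set $K$ of $M$ for $f$ and any $x\in M$, the associated $y\in K$ satisfies $d(g^i y,g^i x)<\eta/4+\eta/4+\eta/4=3\eta/4<\eta$ for $i<p_\eta$ by the triangle inequality. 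Hence $K$ is $(p_\eta,\eta)$-spanning for $g$, and $r_{p_\eta}(g,\eta)\leq r_{p_\eta}(f,\eta/4)$.

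To pass from this one-block comparison to an estimate of $h(g,\cdot)$, I iterate in blocks of length $p_\eta$: for each $(y_1,\ldots,y_k)\in K^k$ define the cell
$$C(y_1,\ldots,y_k):=\{z\in M:d(g^{(j-1)p_\eta+i}z,g^i y_j)<\eta,\ 0\leq i<p_\eta,\ 1\leq j\leq k\}.$$
These cells cover $M$ (apply the one-block $g$-spanning property successively to $x,g^{p_\eta}x,\ldots,g^{(k-1)p_\eta}x$), and each has $g$-Bowen diameter $<2\eta$ on $[0,kp_\eta)$. Selecting one representative per nonempty cell produces a $(kp_\eta,2\eta)$-spanning set of cardinality at most $|K|^k$, so $h(g,2\eta)\leq \frac{1}{p_\eta}\log r_{p_\eta}(f,\eta/4)$. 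The defining property of $p_\eta$ bounds this by $h(f,\eta/4)+h^G_{\loc}(\eta)\leq h(f)+h^G_{\loc}(\eta)$, and Newhouse's inequality at scale $2\eta$ yields
$$h(g)\leq h(g,2\eta)+h_{\loc}(g,2\eta)\leq h(f)+h^G_{\loc}(\eta)+h^G_{\loc}(2\eta)\leq h(f)+2\,h^G_{\loc}(N(\vep)),$$
where the last step absorbs the scale shift $\eta\to 2\eta$ into the factor $2$.

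The main obstacle is the doubling of the spatial scale in the cell construction: the triangle inequality forces each cell to have Bowen diameter $<2\eta$ rather than $<\eta$, which is the source of the factor $2$ in the final estimate. Attempting to obtain a tighter $(kp_\eta,\eta)$-bound would require the tracking error to be strictly smaller than $\eta/2$ at both endpoints, which forces $\vep\, M_0(G)^{p_\eta}<\eta/4$ and contradicts the defining relation of $N(\vep)$.
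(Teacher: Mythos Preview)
Your approach is essentially the paper's: a Lipschitz tracking estimate to pass from $f$-spanning sets to $g$-spanning sets at a slightly coarser scale, a block/cell argument to convert a single $(p_\eta,\cdot)$-spanning bound into a bound on $h(g,\cdot)$, and Newhouse's inequality to close up. The paper writes the cell step implicitly as $h(g,\vep)\le \tfrac{1}{p_\vep}\log r_{p_\vep}(g,\vep/2)$ and applies Newhouse \emph{first} (at scale $\vep$), then tracking, then the definition of $p_\vep$; you do tracking first, then cells, then Newhouse. Either order is fine.

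There is, however, a genuine slip in your last line. The map $\vep\mapsto h^G_{\loc}(\vep)$ is \emph{nondecreasing} (larger $\vep$ gives larger infinite Bowen balls), so $h^G_{\loc}(2\eta)\ge h^G_{\loc}(\eta)$, and the inequality
\[
h^G_{\loc}(\eta)+h^G_{\loc}(2\eta)\le 2\,h^G_{\loc}(\eta)
\]
is backwards; the ``absorption of the scale shift into the factor $2$'' does not work in this direction. What your argument actually yields is $h(g)\le h(f)+2\,h^G_{\loc}(2N(\vep))$, which is slightly weaker than the stated bound. The fix is purely a matter of bookkeeping: arrange the scales so that the cells already have Bowen diameter $<\eta$ (use that $K$ is $(p_\eta,\eta/2)$-spanning for $g$, obtained from a $(p_\eta,\eta/4)$-spanning set for $f$ once the tracking error is $\le\eta/8$), and then Newhouse can be applied at scale $\eta$ as in the paper. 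This just shifts the defining relation for $N(\vep)$ by a harmless multiplicative constant in $\vep$.
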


Using this Proposition to study the continuity of entropy, the main
difficulty is left to estimate $p_\vep$ for a given $f\in C^0(M)$.
In Section \ref{modu} we prove Proposition \ref{mod} and apply it to
some (elementary) examples.

\section{Rate of convergence for ultradifferentiable maps}
In this section we devote to study the  tail  entropy and local volume growth for general
$C^{\infty}$ smooth maps beyond analytic maps. We are going to start
by improving the classical semi-algebraic theory used by Yomdin
\cite{Yom87-2}, Gromov \cite{Gr85} and Buzzi \cite{Buz97}.

\subsection{Buzzi estimates on the tail entropy via the algebraic lemma}

Following Yomdin's and Gromov's works to bound the local volume
growth, Buzzi \cite{Buz97} proved asymptotic $h$-expansiveness for
$C^\infty$ maps. As a first step, he  proved the following upper
bound of the tail entropy of some iterate of  a $C^r$ map $f$ (see
the proof of Theorem 2.2 in \cite{Buz97}).  We let $exp$ denote the
exponential map of the Riemanian manifold $M$. To simplify the
notation we write then $\|D^{k+1}\vep^{-1} f^p\vep\|_\infty$ for
$\|D^{k+1}\vep^{-1}exp_{f^px}^{-1}\circ f^p\circ
exp_x(\vep.)\|_\infty$ for all $0\leq k\leq r-1$.

\begin{Prop}\label{main}
Let $r>1$, $l\leq m\in \mathbb{N}$ and $p\in \mathbb{N}$. Let $f\in
C^r(M)$ with the dimension of $M$ equal to $m$ and $\vep>0$ such that  we have $\|D^{k+1}\vep^{-1}
f^p\vep\|_\infty\leq \max(\|Df^p\|,1)$ for all $1\leq k< r$. Let  $C_{r,l,m}$ as in
the Algebraic Lemma  and let $\tilde{C}_{r,l,m}=2^{l+2m}C_{r,l,m}$. Then

$$v_l^*(f^p,\vep)  \leq \frac{l}{r}\left(\log^+\|Df^p\|+2\log B_r\right)+\log \tilde{C}_{r,l,m}\,$$
and $$h^*(f^p,\vep)  \leq \frac{m}{r}\left(\log^+\|Df^p\|+2\log
B_r\right)+\log \tilde{C}_{r,m,m}\,$$ where $B_r$ is the $r^{th}$
Bell number.
\end{Prop}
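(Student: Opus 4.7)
My plan is to carry out the Yomdin--Gromov iterative reparametrization scheme at the $\vep$-scale, applying the Algebraic Lemma once per dynamical step. Call an $l$-disk $\sigma:[0,1]^l\to M$ \emph{strongly $\vep$-controlled} when $\vep^{-1}\sigma$, read in an appropriate exponential chart at $\sigma(0)$, has $C^r$-norm at most $1$; its image then lies in a ball of radius $O(\vep)$. For the bound on $h^*(f^p,\vep)$ I would begin with a single strongly $\vep$-controlled $m$-disk covering the initial $\vep$-ball $B(x,\vep)\supset B_n(f^p,x,\vep)$; for $v_l^*(f^p,\vep)$ I would take the given $l$-disk $\sigma$ and, using inserted reparametrizations, restrict attention inductively to $\sigma^{-1}\bigl(B_n(f^p,x,\vep)\bigr)$.

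The technical core is the following one-step reparametrization lemma: given a strongly $\vep$-controlled $l$-disk $\sigma$ and a point $f^p y$ such that $f^p\sigma$ enters $B(f^p y,\vep)$, one produces at most $\tilde{C}_{r,l,m}$ maps $\phi_i:[0,1]^l\to[0,1]^l$ with $\bigcup_i\phi_i([0,1]^l)\supset \sigma^{-1}\bigl(f^{-p}B(f^p y,\vep)\bigr)$ such that each $f^p\circ\sigma\circ\phi_i$ is again strongly $\vep$-controlled after a domain rescaling by $(B_r^{2}\max(\|Df^p\|,1))^{-1/r}$. To build the $\phi_i$, I expand $\vep^{-1}(f^p\circ\sigma)$ around a chosen basepoint as its degree-$r$ Taylor polynomial $P$ plus a remainder. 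Applying Faà di Bruno to the composition $\bigl(\vep^{-1}f^p\vep\bigr)\circ(\vep^{-1}\sigma)$ and invoking the hypothesis $\|D^{k+1}\vep^{-1}f^p\vep\|_\infty\leq \max(\|Df^p\|,1)$, both $\|P\|_r$ and the $C^r$-norm of the remainder are bounded by $B_r\max(\|Df^p\|,1)$. The Algebraic Lemma applied directly to $P$ yields the $\tilde{C}_{r,l,m}$ reparametrizations satisfying $\|P\circ\phi_i\|_r\leq 1$; the residual factor $B_r\max(\|Df^p\|,1)$ coming from $P$ itself and from the remainder is absorbed by a domain contraction by its $r$-th root, producing the claimed strongly $\vep$-controlled pieces.

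Iterating this inductive step $n$ times produces at most $\tilde{C}_{r,l,m}^n$ final leaves witnessing $B_n(f^p,x,\vep)$, each carrying an accumulated domain rescaling of $\bigl(B_r^{2}\max(\|Df^p\|,1)\bigr)^{-n/r}$. For $h^*(f^p,\vep)$ I cover the image of each final leaf (of diameter $O(\vep)$) by $O((\vep/\delta)^m)$ $\delta$-balls, with $l=m$; for $v_l^*(f^p,\vep)$ I bound the $l$-volume of the image of each leaf under $f^{np-1}$ by the accumulated stretching raised to the power $l$. Dividing by $n$, taking $\log$ and sending $n\to\infty$ (then $\delta\to 0$) produces the stated bounds, the $l/r$ (resp.\ $m/r$) exponent being precisely Yomdin's gain from trading a $C^r$ bound against an $r$-th root domain contraction. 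The main obstacle is the bookkeeping in the inductive step: one must verify that the chosen domain rescaling genuinely restores the strongly $\vep$-controlled property on $f^p\circ\sigma\circ\phi_i$ for the \emph{next} iterate, so that the induction is clean and neither the constant $\tilde{C}_{r,l,m}$ nor the stretching factor $(B_r^{2}\max(\|Df^p\|,1))^{l/r}$ accumulates faster than geometrically over $n$ steps.
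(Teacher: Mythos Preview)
Your overall scheme is exactly the paper's: it proves the proposition by reducing to an inductive reparametrization lemma (Lemma~\ref{bu}) applied to the rescaled maps $T_n=\vep^{-1}\exp_{f^{p(n+1)}x}^{-1}\circ f^p\circ\exp_{f^{pn}x}(\vep\,\cdot)$, using Fa\`a di Bruno to bound $\|D^r(T_{n+1}\circ T^n\circ\sigma\circ\psi_n)\|_\infty\le A_{n+1}B_r$ with $A_{n+1}=\max(\|Df^p\|,1)$, then the Algebraic Lemma on the Taylor polynomial, and counting leaves to get the factor $\tilde{C}_{r,l,m}(A B_r^2)^{l/r}$ per step.

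The one point where your sketch diverges from the paper---and where it would not go through as written---is the \emph{order} of the two operations inside each step. You apply the Algebraic Lemma to $P$ first and only afterwards perform the affine domain contraction by $(AB_r^2)^{-1/r}$. The paper does the opposite, and the order matters for two reasons. First, the covering: the Algebraic Lemma reparametrizes $P^{-1}([-2,2]^m)$, and one needs this to contain $(\vep^{-1}f^p\circ\sigma)^{-1}([-1,1]^m)$; that requires $\|P-\vep^{-1}f^p\circ\sigma\|_\infty\le 1$, i.e.\ $\|D^r(\vep^{-1}f^p\circ\sigma)\|_\infty$ already small, which is precisely what the preliminary affine subdivision into $(AB_r^2)^{l/r}$ subcubes achieves. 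Second, the control of the remainder after the \emph{non-affine} $\phi_i$: the Taylor remainder $R$ has the special feature that $\|D^kR\|_\infty\le\|D^rR\|_\infty$ on the unit cube (vanishing to order $r$ at the centre), but $R\circ\phi_i$ does not inherit this, so a subsequent $r$-th-root contraction tames only $\|D^r(R\circ\phi_i)\|$ and not the lower derivatives. In the paper's order one first makes $\|D^r\|\le 1/B_r$, hence $\|R\|_r\le 1/B_r$, and then Fa\`a di Bruno with $\|\phi_i\|_r\le1$ gives $\|R\circ\phi_i\|_r\le1$ directly; combined with $\|P\circ\phi_i\|_r\le1$ one gets $\|(\vep^{-1}f^p\circ\sigma)\circ\phi_i\|_r\le2$, and a final $2^l$ subdivision closes the induction. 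This is exactly the ``bookkeeping obstacle'' you flag, and the fix is this reordering: (i) affine subdivision by $(AB_r^2)^{l/r}$, (ii) Algebraic Lemma (cost $4^mC_{r,l,m}$), (iii) final $2^l$ subdivision; the product is $(AB_r^2)^{l/r}\tilde{C}_{r,l,m}$ as claimed.
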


We first  recall Faa di Bruno formula for the derivative of a
composition.

\begin{Lem}\label{Bruno}
$$(f\circ g)^{(k)}=\sum_{l=1}^kf^{(l)}\circ g \times B_{k}^l(g',g'',\cdots,g^{(k-l+1)})$$
with $B_{k}^l$  the so-called Bell polynomials given by
\begin{eqnarray*}&&B_{k}^l(X_1,\cdots,X_{k-l+1})\\[2mm]&=&\sum \frac{k!}{j_1!j_2!\cdots
j_{k-l+1}!}\left(\frac{X_1}{1!}\right)^{j_1}
\left(\frac{X_2}{2!}\right)^{j_2}\cdots\left(\frac{X_{k-l+1}}{(k-l+1)!}\right)^{j_{k-l+1}}\end{eqnarray*}
where the sum holds over all $j_1,j_2,\cdots,j_{k-l+1}\in \NN$ with $\sum
j_i=l$ and $\sum ij_i=k$.
\end{Lem}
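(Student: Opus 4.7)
The plan is to establish the Faà di Bruno formula by induction on $k$. The base case $k=1$ is immediate from the chain rule: $(f\circ g)'=(f'\circ g)\cdot g'$, which matches the unique tuple $l=1$, $j_1=1$, giving $B_1^1(X_1)=X_1$.

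For the inductive step, assume the formula holds at order $k$. Differentiating the expression
$$(f\circ g)^{(k)}=\sum_{l=1}^{k}f^{(l)}\circ g\cdot B_k^l(g',g'',\ldots,g^{(k-l+1)})$$
termwise via the Leibniz and chain rules yields two kinds of contributions for each $l$: first, the derivative of $f^{(l)}\circ g$ produces $f^{(l+1)}\circ g\cdot g'\cdot B_k^l$, which reindexing $l\to l-1$ feeds into the $f^{(l)}\circ g$ coefficient at order $k+1$; second, the derivative of $B_k^l(g',\ldots,g^{(k-l+1)})$, computed by the chain rule as $\sum_{i=1}^{k-l+1}\partial_iB_k^l\cdot g^{(i+1)}$, stays on the $f^{(l)}\circ g$ term. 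Collecting, the induction reduces to verifying the polynomial identity
$$B_{k+1}^l(X_1,\ldots,X_{k-l+2})=X_1\cdot B_k^{l-1}(X_1,\ldots,X_{k-l+2})+\sum_{i=1}^{k-l+1}X_{i+1}\frac{\partial B_k^l}{\partial X_i}(X_1,\ldots,X_{k-l+1}).$$

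To verify this identity, I would expand both sides in terms of the multi-indices $(j_1,\ldots,j_{k-l+2})$ with $\sum j_i=l$, $\sum ij_i=k+1$. Every such tuple at level $k+1$ arises either with $j_1\geq 1$ from a level-$k$ tuple $(j_1-1,j_2,\ldots)$ with sum $l-1$ and weight $k$ (this is the first term), or with $j_{i+1}\geq 1$ for some $i\geq 1$ from a level-$k$ tuple in which $j_{i+1}$ is reduced by one and $j_i$ is increased by one, with sum $l$ and weight $k$ (the $i$-th summand of the second term). A direct manipulation of the multinomial prefactor $\frac{(k+1)!}{\prod j_i!(i!)^{j_i}}$ then shows that the coefficients match term by term; the arithmetic comes down to the elementary identity $(k+1)!=k!\cdot(j_1+\sum_{i\geq 1}(i+1)j_{i+1}\cdot \text{adjustment factors})$ that emerges naturally after cancellation.

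The main obstacle is the combinatorial bookkeeping in checking this last identity, namely matching the multinomial coefficients arising from each of the two sources to the target coefficient $\frac{(k+1)!}{\prod j_i!(i!)^{j_i}}$. I would organize this by fixing a target tuple at level $k+1$ and splitting the analysis according to whether the reduction path ``decrements $j_1$'' (first term) or ``swaps a size-$(i+1)$ entry for a size-$i$ entry'' (second term, one case per $i$). As an alternative, one can bypass the algebra entirely by giving a combinatorial interpretation: unwinding the chain rule $k$ times produces a sum over set partitions of $\{1,\ldots,k\}$, where a partition into $l$ blocks of sizes $i_1,\ldots,i_l$ contributes $f^{(l)}\circ g\cdot\prod_{s=1}^{l}g^{(i_s)}$, and the number of such partitions with $j_i$ blocks of size $i$ is exactly $\frac{k!}{\prod j_i!(i!)^{j_i}}$; this identifies the sum as $B_k^l$ and closes the proof without the inductive combinatorial check.
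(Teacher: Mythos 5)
The paper does not actually prove Lemma \ref{Bruno}: it is recalled as the classical Fa\`a di Bruno formula and used as a black box, so there is no in-paper argument to compare against. Your proposal is a correct proof. The induction is the standard one, and the recursion you reduce it to, namely
$$B_{k+1}^l=X_1B_k^{l-1}+\sum_{i\ge 1}X_{i+1}\,\partial B_k^l/\partial X_i,$$
is the right identity (with the conventions $B_k^0=0$ for $k\ge1$ and the top term $l=k+1$ coming only from the first summand). The coefficient check you defer does close: for a target tuple $(j_1,\dots,j_{k-l+2})$ the contribution of the first term is the fraction $j_1/(k+1)$ of the target coefficient and the contribution of the $i$-th summand of the second term is $(i+1)j_{i+1}/(k+1)$, so the total is $\bigl(\sum_m m j_m\bigr)/(k+1)=1$ by the weight condition $\sum_m mj_m=k+1$ --- this is exactly the ``elementary identity'' you allude to, and it would be worth writing out since it is the only place where something could go wrong. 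Your alternative via set partitions (a partition of $\{1,\dots,k\}$ into $l$ blocks with $j_i$ blocks of size $i$ is counted $k!/\prod_i j_i!(i!)^{j_i}$ times) is also correct and is arguably cleaner; it has the added benefit of directly justifying the paper's subsequent interpretation of $B_r=\sum_l B_r^l(1,\dots,1)$ as the number of set partitions, which the inductive proof does not give for free.
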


The $r^{th}$ bell number  $B_r:=\sum_{l=1}^rB_{r}^l(1,\cdots,1)$
counts the class of all partitions of $\{1,\cdots,n\}$. It also
counts the class of all distributions of $n$  labeled balls among
$n$ indistinguishable urns. Therefore $B_r$ is less than the
cardinality of the class of distributions of $n$ labeled balls among
$n$ labeled urns, the latter class having $r^r$ members.

Proposition \ref{main} follows from the following lemma by
considering for all $n$ and for a fixed $x\in M$ the family of maps $(T_n)_{n\in \NN}$ given by
$$T_n=\vep^{-1}exp_{f^{p(n+1)}x}^{-1}\circ f^p\circ
exp_{f^{pn}x}(\vep\,\cdot\, ). $$ We refer to the original work of
Yomdin \cite{Yom87} and Buzzi (Proposition 3.3 of \cite{Buz97}) for
this step but we give a precise form of the estimation bound in
terms of Bell numbers.

\begin{Lem}\label{bu}
 Let $\sigma:[0,1]^l\rightarrow \mathbb{R}^m$ be a $C^r$ $l$-disk with
$\|\sigma\|_r<1$ and $(T_n:(-2,2)^m\rightarrow \mathbb{R}^m)_n$ be a
family of $C^r$ maps with $\|D^{k+1}T_n\|_\infty\leq A_n$ for all
$1\leq k \leq r-1$ for a sequence $(A_n)_n$ satisfying $A_n\geq \max(\|DT_n\|_\infty,1)$ for all $n$. Then for all $n$ there exists a family  $\mathcal{F}_n:=(\psi_n:[0,1]^l\rightarrow [0,1]^l)$ of continuous maps,  real
analytic on $(0,1)^l$,
such that with $T^n:=T_n\circ\cdots\circ T_1$ :

\begin{itemize}
\item $\|T^n\circ \sigma\circ \psi_n\|_r\leq 1$;\\
\item $\|D\left(T^k\circ \sigma \circ \psi_n\right)\|_\infty\leq 1$ for all $0\leq k\leq
n$;\\
\item $\bigcap_{k=0,1,\cdots,n}(T^k\circ \sigma)^{-1}((-1,1)^m)\subset \bigcup_{\psi_n\in
\mathcal{F}_n}\psi_n([0,1]^l)$;\\
\item $\sharp \mathcal{F}_{n+1}\leq  \tilde{C}_{r,l,m} \sharp \mathcal{F}_{n}\cdot
\left(A_nB_r^2\right)^{\frac{l}{r}}.$
\end{itemize}

\end{Lem}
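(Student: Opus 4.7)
The proof proceeds by induction on $n$. For $n=0$, set $\mathcal{F}_0 := \{\mathrm{id}_{[0,1]^l}\}$; the hypothesis $\|\sigma\|_r < 1$ gives all four conclusions immediately (with $T^0 := \mathrm{id}$). For the inductive step, assume $\mathcal{F}_n$ has been constructed and fix $\psi_n \in \mathcal{F}_n$. Let $\Psi_n := T^n \circ \sigma \circ \psi_n$, which by induction satisfies $\|\Psi_n\|_r \leq 1$, and set $\Phi := T_{n+1} \circ \Psi_n$. The aim is to produce reparametrizations $\tilde\psi : [0,1]^l \to [0,1]^l$ with $\|\tilde\psi\|_r \leq 1$ and $\|\Phi \circ \tilde\psi\|_r \leq 1$, whose images cover $\Psi_n^{-1}(T_{n+1}^{-1}((-1,1)^m))$; then $\mathcal{F}_{n+1}$ consists of the maps $\psi_n \circ \tilde\psi$. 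The earlier bounds $\|D(T^k \circ \sigma \circ \psi_n)\|_\infty \leq 1$ for $k \leq n$ are inherited after post-composition with $\tilde\psi$ since $\|D\tilde\psi\|_\infty \leq 1$, so only the new case $k = n+1$ requires work.

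The first step bounds the derivatives of $\Phi$ via Faa di Bruno (Lemma \ref{Bruno}): using $\|\Psi_n\|_r \leq 1$ and $\|D^l T_{n+1}\|_\infty \leq A_{n+1}$ for $1 \leq l \leq r$, summing the Bell polynomials $B_k^l(1,\ldots,1)$ to the Bell number $B_k \leq B_r$ yields $\|D^k \Phi\|_\infty \leq A_{n+1} B_r$ for $1 \leq k \leq r$.

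Next, I subdivide $[0,1]^l$ into sub-cubes of side $t := (A_{n+1} B_r^2)^{-1/r}$, using at most $\lceil t^{-1}\rceil^l \leq 2^l (A_{n+1} B_r^2)^{l/r}$ cubes $Q$. For each such $Q$ with affine parametrization $\phi_Q$, the rescaling yields $\|D^r(\Phi \circ \phi_Q)\|_\infty \leq t^r A_{n+1} B_r = B_r^{-1}$. Approximate $\Phi \circ \phi_Q$ by its Taylor polynomial $P$ of degree $r-1$; the remainder $R := \Phi \circ \phi_Q - P$ has $\|R\|_r$ small via Taylor's theorem applied with the bound on $D^r$. Now invoke the Algebraic Lemma on $P$: covering $(-1,1)^m$ by shifted unit sub-cubes and applying the lemma to each shift produces at most $2^{2m} C_{r,l,m}$ maps $\phi_i : [0,1]^l \to [0,1]^l$ with $\|\phi_i\|_r \leq 1$ and $\|P \circ \phi_i\|_r \leq 1$ covering the relevant preimages.

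Finally, the identity $\Phi \circ \phi_Q \circ \phi_i = P \circ \phi_i + R \circ \phi_i$ combined with a Faa di Bruno estimate on $R \circ \phi_i$ (using $\|\phi_i\|_r \leq 1$ and $\|R\|_r$ small) gives $\|\Phi \circ \phi_Q \circ \phi_i\|_r \leq 1$. Taking $\tilde\psi := \phi_Q \circ \phi_i$ and counting over all $Q$ and $i$ produces the required cardinality $\sharp\mathcal{F}_{n+1} \leq \tilde C_{r,l,m} (A_{n+1} B_r^2)^{l/r} \sharp\mathcal{F}_n$ with $\tilde C_{r,l,m} = 2^{l+2m} C_{r,l,m}$. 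The principal technical obstacle is calibrating $t = (A_{n+1} B_r^2)^{-1/r}$ so that $D^r(\Phi \circ \phi_Q)$ is small enough (namely $\leq B_r^{-1}$) for the Taylor remainder to survive the subsequent composition with $\phi_i$, where Faa di Bruno reintroduces a $B_r$ factor; this tradeoff is precisely what forces the extra $B_r^2$ inside the parenthesis, rather than the naive $B_r$ one might expect from pure Yomdin scaling.
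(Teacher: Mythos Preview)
Your proof is correct and follows essentially the same approach as the paper's: induction, Faa di Bruno to bound $\|D^r\Phi\|$, affine subdivision to make the $r$-th derivative at most $B_r^{-1}$, Taylor approximation, the Algebraic Lemma, and a second Faa di Bruno to control the remainder after composition with $\phi_i$. One minor slip: from $\|P\circ\phi_i\|_r\leq 1$ and $\|R\circ\phi_i\|_r\leq 1$ you only obtain $\|\Phi\circ\phi_Q\circ\phi_i\|_r\leq 2$, not $\leq 1$; the paper fixes this with one further subdivision into $2^l$ half-scale subcubes, and that is where its factor $2^l$ in $\tilde C_{r,l,m}=2^{l+2m}C_{r,l,m}$ actually comes from (the paper, like you, is loose about the ceiling in the first subdivision).
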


\begin{proof}
We argue by induction on $n$. Assume the lemma holds for $n$ and let
us prove it for $n+1$. For all $\psi_n\in \mathcal{F}_n$ we have by
Faa di Bruno formula:

\begin{eqnarray*}
&&\|D^r(T_{n+1}\circ T^n\circ \sigma\circ \psi_n)\|_\infty \\[2mm]& \leq
& \sum_{k=1}^r \|D^kT_{n+1}\|B_r^k\left(\|D \left(T^n\circ \sigma\circ
\psi_n\right)\|_\infty,\cdots,\|D^{(r-k+1)} \left(T^n\circ
\sigma\circ \psi_n\right)\|_\infty\right).
\end{eqnarray*}
 and then by the induction hypothesis and the hypothesis on the higher derivatives of
 $T_{n+1}$:
\begin{eqnarray*}
\|D^r(T_{n+1}\circ T^n\circ \sigma\circ \psi_n)\|_\infty & \leq & \sum_{k=1}^r \|D^kT_{n+1}\|B_r^k(1,\cdots,1)\\
&\leq & A_{n+1}B_r.
\end{eqnarray*}
Therefore,  up to subdivide $[0,1]^l$ into
$\left(A_{n+1}B_r^2\right)^{\frac{l}{r}}$ subcubes and
to reparametrize them affinely from $[0,1]^l$,  we can assume

\begin{eqnarray}\label{one}
\|D^r\left(T_{n+1}\circ T^n\circ \sigma\circ
\psi_n\right)\|_{\infty} & \leq & 1/B_r.
\end{eqnarray}
Now if $P$ is the $r^{th}$ Lagrange polynomial at the center of
$[0,1]^l$ of $T_{n+1}\circ T^n\circ \sigma \circ \psi_n$ there
exists by Algebraic Lemma  a family of maps
$(\phi:[0,1]^l\rightarrow [0,1^l])$ satisfying
$$P^{-1}([-2,2]^m)=\bigcup_{i=1,\cdots,4^mC_{r,l,m}}\phi_i([0,1]^l).$$ In
particular as we have by Taylor formula $\|T_{n+1}\circ T^n\circ
\sigma \circ \psi_n-P\|_\infty\leq 1$, the maps
$\psi_{n+1}:=\psi_n\circ \phi$ satisfy
$$\bigcap_{k=0,1,\cdots,n+1}(T^k\circ \sigma)^{-1}((-1,1)^m)\subset
\bigcup_{\psi_{n+1}\in \mathcal{F}_n}\psi_{n+1}([0,1]^l).$$ Moreover
$\|P\circ \phi_i\|_r\leq 1$ and $\|\phi_i\|_r\leq 1$ for each $i$
and therefore by using again Faa di Bruno formula together with
(\ref{one}) we get:

\begin{eqnarray*}
\|T_{n+1}\circ T^n\circ \sigma \circ \psi_{n+1}\|_r & \leq & 1+ \|(P-T_{n+1}\circ T^n\circ \sigma \circ \psi_n)\circ \phi\|_r\\
& \leq  & 1+\sum_{k=1}^r\|D^k(P-T_{n+1}\circ T^n\circ \sigma \circ \psi_n)\|B_r^k(1,\cdots,1)\\
&\leq & 1+\sum_{k=1}^r\|D^r(T_{n+1}\circ T^n\circ \sigma \circ
\psi_n)\|B_r^k(1,\cdots,1)\leq 2.
 \end{eqnarray*}

Up to subdivide again $[0,1]^l$ into $2^l$ isometric subcubes and to reparametrize them affinely from $[0,1]^l$ we get $\|T_{n+1}\circ T^n\circ \sigma \circ \psi_{n+1}\|_r  \leq 1$.

Finally,   for all $0\leq k\leq n$ we have

\begin{eqnarray*}
\|D\left(T^k\circ \sigma \circ \psi_{n+1}\right)\|_\infty&=
&\|D\left(T^k\circ \sigma \circ \psi_n\circ
\phi\right)\|_\infty\\[2mm]
&\leq &  \|D\left(T^k\circ \sigma \circ
\psi_n\right)\|_\infty\|D\phi\|_\infty\leq 1.
\end{eqnarray*}
This proves the statement for $n+1$ and concludes the proof by
induction of Lemma \ref{bu}.

\end{proof}

Then we use the following lemma which relies the $\vep$-local volume
growth  and the $\vep$-tail entropy of $f$ with these of its
iterates $f^p$ to kill the constant term $\frac{2l}{r}\log B_r+\log
\tilde{C}_{r,l,m}$ in Proposition \ref{main}. It follows from two
facts. First the $(\varepsilon,np)$-dynamical  ball for $f$ is
contained in the $(\varepsilon, n)$-dynamical  ball for $f^p$ with
the same center. Secondly the growth of any $l$-disk under $f^k$
with $0\leq k\leq p$ is uniformly bounded by $\max(1,\|Df\|)^{pl}$
and  for any scale $\delta$ there exists a smaller scale $\delta'$
such that a $(np,\delta')$ spanning set for $f^p$ is $(\delta,n)$
spanning  for $f$.

\begin{Lem}\label{htailpower}
Let  $f\in C^1(M)$, and $g\in C^0(M)$, $\vep>0$ and $p\neq 0$ be an
integer. For any integer $l$ less than or equal to the dimension of
$M$, we have
\begin{eqnarray*}v_l^*(f,\vep)&\leq & v_l^*(f^p,\vep)/p\,;\\[2mm]
h_{\loc}(g,\varepsilon) &\leq&  h_{\loc}(g^p,\varepsilon)/\,p;\\[2mm]
h^*(g,\varepsilon) &\leq&  h^*(g^p,\varepsilon)/p\,.
\end{eqnarray*}
\end{Lem}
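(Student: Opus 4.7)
The plan is to exploit the elementary containment $B_{np}(f,x,\vep)\subset B_n(f^p,x,\vep)$, valid for all $n,p\geq 1$ and $x\in M$: indeed if $d(f^i x, f^i y)<\vep$ for every $0\leq i<np$, then in particular $d((f^p)^j x,(f^p)^j y)<\vep$ for every $0\leq j<n$, which is precisely the definition of the Bowen ball of $f^p$.

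For the local volume growth, I fix an $l$-disk $\sigma$ and write $f^{np-1}=f^{p-1}\circ(f^p)^{n-1}$. The chain rule then gives the pointwise bound $\|\Lambda^l D_t(f^{np-1}\circ\sigma)\|_l\leq \max(1,\|Df\|_\infty)^{(p-1)l}\,\|\Lambda^l D_t((f^p)^{n-1}\circ\sigma)\|_l$. Integrating over $E:=\sigma^{-1}(B_{np}(f,x,\vep))\subset\sigma^{-1}(B_n(f^p,x,\vep))$ yields
$$\left|f^{np-1}\circ\sigma|_E\right| \leq \max(1,\|Df\|_\infty)^{(p-1)l}\,\left|(f^p)^{n-1}\circ\sigma|_{\sigma^{-1}(B_n(f^p,x,\vep))}\right|.$$
Taking $\sup_x$, then $\limsup_{n\to\infty}\frac{1}{np}\log(\cdot)$, and finally $\sup$ over $l$-disks makes the $n$-independent prefactor disappear and gives $v_l^*(f,\vep)\leq v_l^*(f^p,\vep)/p$ along multiples of $p$; an arbitrary index $m=np+k$ with $0\leq k<p$ is controlled by the additional harmless factor $\max(1,\|Df\|_\infty)^{pl}$, which vanishes in $\frac{1}{m}\log$.

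For the $\vep$-tail entropy, uniform continuity of $g$ provides, for each $\delta>0$, a $\delta'=\delta'(\delta,p)>0$ such that $d(y,z)<\delta'$ implies $d(g^k y,g^k z)<\delta$ for every $0\leq k<p$. Consequently any $(n,\delta')$-spanning set for $g^p$ on a set $A\subset M$ is automatically a $(np,\delta)$-spanning set for $g$ on $A$. Applying this with $A=B_n(g^p,x,\vep)$, and using monotonicity of spanning numbers together with the containment $B_{np}(g,x,\vep)\subset B_n(g^p,x,\vep)$, I obtain
$$r_{np}(g, B_{np}(g,x,\vep), \delta) \leq r_n(g^p, B_n(g^p,x,\vep), \delta').$$
Taking $\sup_x$, $\limsup_{n\to\infty}\frac{1}{np}\log(\cdot)$, and finally $\delta\to 0$ (forcing $\delta'\to 0$) yields $h^*(g,\vep)\leq h^*(g^p,\vep)/p$. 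The same argument with $F\cap B_n(g,x,\vep)$ replacing $B_n(g,x,\vep)$ throughout, together with the observation that every $g$-invariant probability is $g^p$-invariant (so the infima over sets $F$ with $\mu(F)\geq\sigma$ transfer between the two systems), produces the analogous bound for $h_{\loc}$.

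The only minor obstacle is the bookkeeping for indices $m$ not divisible by $p$. For the volume growth it is absorbed by the factor $\max(1,\|Df\|_\infty)^{pl}$ noted above; for the spanning numbers one uses $r_m(g, B, \delta)\leq r_{np}(g, B, \delta)\cdot r_k(g, M, \delta)$ with $m=np+k$ and $0\leq k<p$, whose last factor depends only on $\delta$ and $p$ and therefore disappears in the $\limsup \frac{1}{m}\log$.
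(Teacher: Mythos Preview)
Your proof is correct and follows essentially the same route as the paper's sketch: the containment $B_{np}(f,x,\vep)\subset B_n(f^p,x,\vep)$, the uniform bound $\max(1,\|Df\|_\infty)^{pl}$ on the disk growth over the residual $p$ iterates, and the uniform-continuity step producing $\delta'$ so that an $(n,\delta')$-spanning set for $g^p$ becomes an $(np,\delta)$-spanning set for $g$. The paper states these ingredients in two sentences without details; you have supplied them accurately (and in fact corrected a transposition of indices in the paper's sketch).

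One very minor remark: your submultiplicativity bound $r_m(g,B,\delta)\leq r_{np}(g,B,\delta)\cdot r_k(g,M,\delta)$ for $m=np+k$ is literally true only with $2\delta$ on the left (build spanning points by picking, for each pair $(e,f)$, a witness in $B$ shadowed by $e$ on $[0,np)$ and by $f$ on $[np,m)$). This is harmless since $\delta\to 0$. Alternatively, and slightly more cleanly, you can absorb the residual $k<p$ steps directly into the uniform-continuity choice of $\delta'$ (require $d(y,z)<\delta'\Rightarrow d(g^r y,g^r z)<\delta$ for $0\leq r<2p$), so that an $(n,\delta')$-spanning set for $g^p$ is already $(m,\delta)$-spanning for $g$ whenever $np\leq m<(n+1)p$.
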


By taking the limit when $\vep$ goes to zero we have $v_l^*(f^p)\leq
v_l^*(f)/p$, $h_{\loc}(f) \leq h_{\loc}(f^p)/p$ and $h^*(f) \leq
h^*(f^p)/p$. The equalities $v_l^*(f^p)=v_l^*(f)/p$, $h_{\loc}(f) =
h_{\loc}(f^p)/p$ and $h^*(f) = h^*(f^p)/p$ hold also true but are not
used here.

\subsection{Rate of convergence of the tail entropy for ultradifferentiable maps}

We will make an adapted choice of $p$ and $r$ together to give a
precise rate of convergence of $V$-ultradifferentiable maps.\\

\noindent{\it Proof of Theorem \ref{gen}.\quad} We prove Theorem \ref{gen} for the $\vep$-local $l$-volume growth for some given $0\leq l\leq m$. The proof is analogous for the $\vep$-tail entropy and is left to the reader.

Now we fix a logarithmic weight $(M_k)_k$ and we consider $f\in C_V^\cM(M)$. Let $1>\gamma>0$. We
choose $r$ and then $p$ such that

\begin{itemize}\label{choice}
\item $r=\lceil \frac{l}{\gamma}\rceil$;\\
\item $ p = \lceil\frac{r\log (\tilde{C}_{r,l,m}r^{2l})}{Dl\log M_0}\rceil. $
\end{itemize}
In particular  we have
\begin{itemize}\label{choice}
\item $\frac{l}{r}\leq \gamma$;\\
\item $\frac{\log (\tilde{C}_{r,l,m}r^{2l})}{ p} \leq \frac{2D l\log M_0}{r}\leq 2D\gamma\log M_0.$
\end{itemize}

Then we fix $\vep$ so that the assumptions on the derivatives of
$f^p$ of Lemma \ref{bu} is checked with $A_n=M_0^p$ for all $n$, that is for all $1\leq
k< r$ :
\begin{eqnarray*}
\|D^{(k+1)}\vep^{-1}f^p\vep\|&\leq & M_0^p.
\end{eqnarray*}

Note that we need $\vep$  also satisfy
$$\vep M_0^p<R_{inj}.$$
\begin{Lem}
With the previous notations, we have for $k\geq 1$

$$\|D^{(k+1)}f^p\|\leq (kp)^kM_0^{(k+1)(p-1)}\max_{k_i\geq 1, \sum_i k_i=k}\,\,\prod_i M_{k_i}.$$
\end{Lem}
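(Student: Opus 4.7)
My proof is by induction on $p\ge 1$. The base case $p=1$ is immediate: $f\in C_V^\cM(M)$ gives $\|D^{k+1}f\|\le M_k$, and the right-hand side equals $k^kM_0^0\max_{\sum k_i=k}\prod M_{k_i}\ge M_k$ (choose the one-part composition $k_1=k$).

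For the inductive step I write $f^p=f\circ f^{p-1}$ and apply Faa di Bruno (Lemma \ref{Bruno}) to get
\begin{equation*}
D^{k+1}(f^p)=\sum_{l=1}^{k+1}D^lf\circ f^{p-1}\cdot B_{k+1}^l\!\left(Df^{p-1},\ldots,D^{k-l+2}f^{p-1}\right).
\end{equation*}
Into each Bell monomial I substitute $\|D^lf\|\le M_{l-1}$, $\|Df^{p-1}\|\le M_0^{p-1}$ (chain rule), and for $i\ge 2$ the induction hypothesis $\|D^if^{p-1}\|\le((i-1)(p-1))^{i-1}M_0^{i(p-2)}U_{i-1}$, where $U_m:=\max_{k_j\ge 1,\sum k_j=m}\prod M_{k_j}$. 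The clean packaging rests on two simple identities valid for each multi-index $(j_1,\ldots)$ with $\sum j_i=l$, $\sum ij_i=k+1$: the resulting exponent of $M_0$ is $(p-1)j_1+(p-2)\sum_{i\ge 2}ij_i=(p-2)(k+1)+j_1\le (p-1)(k+1)$ (absorbed using $M_0\ge 1$); and the residual $M$-product $M_{l-1}\prod_{i\ge 2}U_{i-1}^{j_i}$ has indices summing to $(l-1)+\sum_{i\ge 2}(i-1)j_i=k$, so it is bounded by $U_k$. Together these factor $M_0^{(p-1)(k+1)}U_k$ out of the sum and reduce the inequality to a purely combinatorial one.

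What remains is to show
\begin{equation*}
S_{k,p}:=\sum_{j:\,\sum ij_i=k+1}\frac{(k+1)!}{\prod j_i!}\prod_{i\ge 2}\frac{((i-1)(p-1))^{(i-1)j_i}}{(i!)^{j_i}}\le (kp)^k,
\end{equation*}
which is the main obstacle. I plan to recognise $S_{k,p}$ as the complete Bell polynomial $B_{k+1}(x_1,\ldots,x_{k+1})$ with $x_1=1$ and $x_i=((i-1)(p-1))^{i-1}$, isolate the dominant $l=1$ term which evaluates to exactly $(k(p-1))^k$, and control the subleading $l\ge 2$ contributions by monotonicity $(x_i\le (k(p-1))^{i-1})$ together with the Touchard identity $B_{k+1}(1,y,\ldots,y^k)=\sum_l S(k+1,l)y^{k+1-l}$ with $y=k(p-1)$. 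The gap $(kp)^k-(k(p-1))^k=\sum_{j=1}^k\binom{k}{j}k^j(k(p-1))^{k-j}\ge k^2(k(p-1))^{k-1}$ provides ample room to absorb the Stirling-number tail, closing the induction.
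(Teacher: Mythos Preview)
Your induction on $p$ with Faa di Bruno is a reasonable strategy, but the final combinatorial step does not go through as planned. After reducing to $S_{k,p}\le (kp)^k$ you bound $S_{k,p}$ above by the Touchard polynomial $\sum_{l}S(k+1,l)\,y^{k+1-l}$ with $y=k(p-1)$, and propose to absorb the $l\ge 2$ tail into the gap $(kp)^k-(k(p-1))^k$. This fails already at $k=5$: the $l=2$ term alone contributes $S(6,2)\,y^{4}=31\cdot 5^4(p-1)^4=19375\,(p-1)^4$, whereas the coefficient of $(p-1)^4$ in $(5p)^5-(5(p-1))^5$ is only $5^6=15625$. In general $S(k+1,2)=2^k-1$ outgrows $\binom{k}{1}k=k^2$, so for every $k\ge 5$ and $p$ large the Touchard upper bound actually exceeds $(kp)^k$. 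The underlying inequality $S_{k,p}\le (kp)^k$ may still be true (it checks numerically for small $k$), but the replacement $x_i=((i-1)(p-1))^{i-1}\mapsto (k(p-1))^{i-1}$ throws away exactly the cancellation that makes it hold; you would need a sharper argument here.

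The paper avoids this difficulty entirely by inducting on $k$ rather than $p$. Writing $D^{k+2}f^p=D(D^{k+1}f^p)$ and using that each monomial of $D^{k+1}f^p$ has degree at most $(k+1)(p-1)+1\le (k+1)p$, the product rule shows the number of monomials satisfies $N_{k}\le\prod_{j=1}^{k}(j(p-1)+1)\le k!\,p^k\le (kp)^k$. The same one-step differentiation shows that in every monomial the exponents $l_n$ of $D^{n+1}f\circ f^m$ satisfy $\sum_n n\,l_n=k$ and $l_0\le (k+1)(p-1)$, so each monomial is bounded by $M_0^{(k+1)(p-1)}\max_{\sum k_i=k}\prod M_{k_i}$. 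Inducting on $k$ keeps the combinatorics trivial; inducting on $p$ forces you to compound the bound through $p$ layers of Faa di Bruno, which is where the loss occurs.

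A minor bookkeeping point: the claim ``$M_{l-1}\prod_{i\ge 2}U_{i-1}^{j_i}\le U_k$'' is false for $l=1$, since then the left side is $M_0\,U_k>U_k$. It is easily repaired because for $l=1$ one has $j_1=0$, so the $M_0$-exponent $(p-2)(k+1)+j_1$ still leaves room to absorb the stray $M_0$; but this should be made explicit.
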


\begin{proof}
Let $k\geq 1$. Clearly  $D^{(k+1)}f^p$ is a polynomial in
$D^{(n+1)}f\circ f^m$ with $0\leq m \leq p-1$ and $0\leq n\leq k$.
By an easy induction the total degree of this polynomial is
$(k+1)(p-1)+1$, the degree of the variables involving the first
derivative of $f$ is at most $(k+1)(p-1)$ and  the number of
monomials does not exceed $k!p^k$. Also if we denote $l_n$ the
degree in the derivative of order $n+1$ we have $\sum_nnl_n=k$.
\end{proof}

We continue the proof of Theorem \ref{gen}.  It follows from the
logarithmic convexity  of the weight  $(M_k)_k$ that
$$\max_{k_i\geq1, \sum_i k_i=k}\,\,\prod_i M_{k_i}\leq M_0^k M_k.$$
According to the above lemma  we get then :
\begin{eqnarray*}
\|D^{(k+1)}\vep^{-1}f^p\vep\|&\leq & \vep^k(kp)^kM_0^{(k+1)p} (M_k/M_0).
\end{eqnarray*}
Therefore as $(\left(M_k/M_0\right)^{\frac{1}{k}})_k$ is nondecreasing,
we may choose
$$\vep=\frac{1}{rp}M_0^{-p} \min\left(R_{inj},\left(M_r/M_0\right)^{-\frac{1}{r}}\right).$$
Apply now Proposition \ref{main} with the previous datas. We get  :
\begin{eqnarray*}\label{choic}
\frac{1}{p}v_l^*(f^p, \vep) & \leq &
\frac{l}{rp}\log\|Df^p\|+\frac{\log(
\tilde{C}_{r,l,m}B_r^{\frac{2l}{r}})}{p}\\[2mm]
& \leq & \frac{l}{r}\log M_0+\frac{\log (\tilde{C}_{r,l,m}r^{2l})}{p}\\[2mm]
& \leq & (2D+1)\gamma \log M_0.
\end{eqnarray*}
According to Lemma \ref{htailpower} we have the following upper
bound on the local $l$-volume growth of $f$,
$$v_l^*(f, \vep) \leq (2D+1)\gamma \log M_0. $$

We explicit now the function $\vep=\psi(\gamma)$. In fact we give a
lower bound  $\varphi$ of $\psi$ which is increasing. Then we
inverse $\varphi$ to get $\gamma\leq \varphi^{-1}(\vep)$. We have :
\begin{eqnarray*}
\vep & = & \frac{1}{rp}M_0^{-p}\min\left(R_{inj},\left(M_r/M_0\right)^{-\frac{1}{r}}\right);
\\[2mm]
-\log \vep & = &\log (pr)+p\log M_{0}+ \max\left(\log^+(1/R_{inj}),\frac{\log (M_r/M_0)}{r}\right).\\
\end{eqnarray*}
Now we have by $(l,m,D)$-admissibility of $\mathcal{M}$
\begin{eqnarray*}
\log (pr)+p\log M_{0}&\leq &\log r+ (1+\log M_0)p,\\
&\leq & \log r +(1+\log M_0)\frac{r\log \left(\tilde{C}_{r,l,m}r^{2l}\right)}{Dl\log
M_0}\\
&\leq & \log r +\frac{2r\log \left(\tilde{C}_{r,l,m}r^{2l}\right)}{Dl},\\
&\leq & \frac{\log (M_r/M_0)}{r}.
\end{eqnarray*}
It follows that
\begin{eqnarray*}
-\log \vep&\leq & 2\max\left(\log^+(1/R_{inj}),\frac{\log (M_r/M_0)}{r}\right),
\end{eqnarray*}
that is for all $0<\vep<\min\left(1,R_{inj}^2\right)$ we have
\begin{eqnarray*}
-\log \vep&\leq & \frac{2\log (M_r/M_0)}{r}.
\end{eqnarray*}
Therefore by the definition of $G_l$ and then by replacing $r$ by
its expression in terms of $\gamma$
\begin{eqnarray*}
r &\geq & G_l\left(|\log \vep|/2\right);\\
\gamma & \leq & \frac{2l}{ G_l(|\log \vep|/2)}.
\end{eqnarray*}

\hfill $\Box$

\begin{Rem}
We gave here estimates of the rate of convergence of the
$\vep$-local entropy of $f$ through $\vep$-local volume growth. But
by using the same method we can deal directly with the measure
quantity, $h_{\loc}(\mu,\vep)$, and determine the rate of
convergence of $\left(h_{\loc}(\mu,\vep)\right)_\vep$ in terms of
the maximal positive Lyapunov exponent of $\mu$ instead of $\log
M_0\geq \log^+ \|Df\|$.
\end{Rem}

\subsection{Estimate of $C_{r,1,m}$}\label{quantgr}
In this section we will give an estimate of the algebraic constant
in dimension $1$ :

\begin{Prop}\label{grone}
There exists a constant $C$ such that for all $r$,
$$C_{r,1,m}\leq C \, m^3r^8. $$
\end{Prop}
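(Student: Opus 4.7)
My plan is to build each $\phi_i$ as an affine parametrization of a carefully chosen subinterval of $[0,1]$, with the higher-derivative requirement $\|P\circ\phi_i\|_r\leq 1$ enforced via the classical Markov brothers' inequality. The resulting count will come out to $O(mr^3)$, which is comfortably within the claimed bound $C\,m^3r^8$.

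First I would perform a combinatorial decomposition. The set $S:=P^{-1}([0,1]^m)\cap[0,1]$ is a union of closed intervals whose boundary points lie in $\{0,1\}$ together with the real roots in $[0,1]$ of the $2m$ polynomials $P_j$ and $P_j-1$, $j=1,\ldots,m$. Since each of these has degree $\leq r$, the partition has at most $2mr+1$ pieces; retaining only those on which $P$ actually maps into $[0,1]^m$ yields a collection of at most $2mr+1$ intervals $I$ whose union contains $S$ and on each of which $|P_j|\leq 1$ for every $j$.

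Next, on each such $I=[a,b]$, the Markov brothers' inequality applied after the affine rescaling $I\to[-1,1]$ yields, for $1\leq k\leq r$,
\[
\sup_I|P_j^{(k)}|\;\leq\;\bigl(2/|I|\bigr)^{k}\,r^{2k}.
\]
I would then subdivide $I$ into $N:=2r^{2}$ equal subintervals $I_1,\ldots,I_N$ and let $\phi_i:[0,1]\to I_i$ be the corresponding affine bijection. By construction $\|\phi_i\|_r\leq|I_i|\leq 1$, and by the chain rule together with the previous Markov estimate
\[
\|(P_j\circ\phi_i)^{(k)}\|_\infty \;\leq\; (|I|/N)^{k}(2/|I|)^{k}r^{2k}\;=\;(2r^{2}/N)^{k}\;=\;1,
\]
so $\|P\circ\phi_i\|_r\leq 1$. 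The maps are polynomial, hence analytic on $(0,1)$, and by construction the union of their images covers $S=P^{-1}([0,1]^m)$.

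Finally, the total count is at most $(2mr+1)\cdot 2r^{2}\leq C\,mr^{3}\leq C\,m^3r^8$, as claimed. The one point requiring care is the application of Markov's inequality: it works precisely because we have first restricted to a subinterval on which $|P_j|\leq 1$, so that the hypothesis is satisfied for the polynomial $P_j|_I$ of degree $\leq r$; without the preliminary restriction to $S$ no such polynomial control of the higher derivatives of $P$ on all of $[0,1]$ would be available. All other steps are routine: the combinatorial count relies only on the fact that a degree-$r$ polynomial has at most $r$ real roots, and the reparametrization estimate is just a chain-rule computation for affine maps.
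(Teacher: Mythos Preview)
Your proof is correct and in fact yields the sharper bound $O(mr^3)$, well within the stated $Cm^3r^8$. The route, however, is genuinely different from the paper's. The paper proceeds in three stages: first it partitions $[0,1]$ into $O(rm^2)$ pieces and on each uses either an affine contraction or the local inverse $P_i^{-1}$ of the dominant coordinate to force $\|P_j\circ\phi\|_1\leq 1$; next it subdivides further (into $O(mr^3)$ pieces) so that all derivatives $(P_j\circ\phi)^{(k)}$, $2\leq k\leq r+1$, have constant sign; finally it composes with a specially designed polynomial $Q_r$ of degree $2r-1$ (with $Q_r(0)=0$, $Q_r(1)=1$ and all intermediate derivatives vanishing at the endpoints) and exploits the monotonicity just arranged, together with the first-derivative Markov inequality applied to auxiliary polynomials $R_i$, to control the higher derivatives, followed by one more affine subdivision into $O(r^4)$ pieces. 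You bypass the nonlinear reparametrizations entirely by invoking the full Markov brothers' inequality for all derivatives at once: since $|P_j|\leq 1$ on each interval $I$, one has $\sup_I|P_j^{(k)}|\leq (2/|I|)^k r^{2k}$ directly, and a single equal subdivision into $2r^2$ pieces then suffices with purely affine $\phi_i$. Your argument is shorter and sharper; the paper's more constructive scheme, on the other hand, does not rely on the higher-order Markov inequality (it uses only the $k=1$ case), and its structure is closer to what one attempts in the multivariable case $l>1$, where Markov-type bounds are far less tractable and the paper indeed reports only a much weaker estimate.
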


\begin{Rem}In higher dimensional cases the proof of Gromov's algebraic lemma is
more complicated  and we do not plan to discuss this case in the
present paper. In \cite{paperinpreparation} the author proves that
for any dimension $m$ there exists a constant $A_m$  depending only
on $m$ such that $C_{r,m}\leq r^{A_mr^m}$.  With this estimate, by
applying Theorem \ref{gen} we can get  that for any analytic map $f$
on a compact smooth manifold $M$ of dimension $m$, it holds that
$$h^*(f,\vep)\leq B\left(\frac{\log|\log \vep|}{|\log
\vep|}\right)^{\frac{1}{m+1}}$$  for some constant $B=B(f)$ depending
 on $f$.

\end{Rem}

>From the point of view of  Proposition \ref{grone}, it seems
reasonable to ask the following question concerning the polynomial
growth of $C_{r,l,m}$ in $r$ for any dimension:

\begin{Que}
For any $m\in \NN$, do there exist constants $A_{m,l}, B_{m,l}$ such
that for all $r\in \NN$, $$C_{r,l,m}\leq A_{m,l} \,
r^{B_{m,l}} \ ?
$$
\end{Que}

 We will not directly adopt the proof
of Gromov \cite{Gr85} but give here a new proof of Algebraic Lemma
in the one dimensional case. In fact, by following straightforwardly
Gromov's work we  only manage to get the following super-exponential
growth upper bound, $C_{r,1,m}\leq Cm^310^{r^2}$. \\

\noindent{\it Proof of Proposition \ref{grone}}.\quad  Let $(P_1,...,P_m)\in
\mathcal{R}[X]^m$ be a finite family of polynomials  of degree less than or equal to $r$.\\

\emph{ First step : $\|P_j\circ\phi\|_1\leq 1$.} To bound the first
derivative, we consider one connected component of the following set

$$[0,1]\setminus \bigcup_{i,j}\{P_i=0,P_i=1,|P_i'|=|P_j'|, |P_i'|=1\}.$$

 Observe there are at most $4rm+2rm^2\leq 6rm^2$  components. The set
$\bigcap_jP_j^{-1}([0,1])$ is the closure of the union of some of these intervals. On such an interval $I$   we
have $P_j(I)\subset [0,1]$ for any $j=1,...,m$. Moreover there exists $i$ such that    $|P_i'(x)|=\max_{j}|P_j'(x)|$ for all $x\in I$ and we have either $|P_i'(x)|\leq 1 $ for all $x\in I$ or $|P'_i(x)|\geq 1$ for all $x\in I$. In the first case we just reparametrize $I:=[a,b]$ from $[0,1]$ by an affine contraction
$$\phi_I(t)=a+t(b-a)$$ while in the second  case of  we consider the
inverse of $P_i$ $$\phi_I(t):=P_i^{-1}(P_i(a)+t(P_i(b)-P_i(a))).$$ One easily
checks that $\|\phi\|_1\leq 1,\quad \|P_{j}\circ \phi\|_1\leq 1$ for any $j$. \\

\emph{ Second step : $(P_j\circ\phi)^{(k)}$ have constant sign for $k=2,...,r+1$.}
 We subdivide $[0,1]$ into subintervals
 where the derivatives  $\left(P_j\circ\phi_I\right)^{(k)}$  for $k=2,\cdots,r+1$ and for $j=1,...,m$
  have constant sign and therefore where $\left(P_j\circ\phi_I\right)^{(k)}$   for $k=1,\cdots,r$ are monotone. It is enough to consider one connected component
  of the  set  $$[0,1]\setminus \{(P_j\circ \phi_I)''=0;\cdots;P_j\circ \phi_I^{(r)}=0\}.$$
   When $\phi_I$ is just a linear contraction, $\phi_I(t)=a+t(b-a)$ for all $t\in [0,1]$, there are at most $$1+\sum_j deg(P_j'')+ \cdots+deg(P_j^{(r)})\leq mr^2$$ such components. As before we reparametrize them from $[0,1]$ by affine contraction $t\mapsto c+t(d-c)$.

 For the second case, $\phi_I(t):=P_i^{-1}(P_i(a)+t(P_i(b)-P_i(a)))$  for all $t\in [0,1]$, we use the following lemma.

\begin{Lem}
Let $ k\geq 1$. Then there exists a polynomial $R\in
\mathbb{R}[X_1,...,X_k]$ of total degree $k-1$ such that
\begin{eqnarray*}&&\left(P_i^{-1}\left[P_i(a)+.(P_i(b)-P_i(a))\right]\right)^{(k)}\\[2mm] &=& \frac{R(P_i'\circ
P_i^{-1}(a+.(b-a)),\cdots,P_i^{(k)}\circ
P_i^{-1}(P_i(a)+.(P_i(b)-P_i(a))}{\left(P_i'\circ
P_i^{-1}(P_i(a)+.(P_i(b)-P_i(a))\right)^{2k-1}}.\end{eqnarray*}
\end{Lem}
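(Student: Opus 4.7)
The plan is to proceed by induction on $k$. Write $\phi(t):=P_i^{-1}(g(t))$ with $g(t):=P_i(a)+t(P_i(b)-P_i(a))$, and set $C:=P_i(b)-P_i(a)$, so that $g'\equiv C$ and $g^{(j)}\equiv 0$ for every $j\geq 2$. Throughout, $R$ will be a polynomial in $k$ formal variables $X_1,\dots,X_k$ whose coefficients may involve $C$ but whose total degree in $(X_1,\dots,X_k)$ is exactly $k-1$; the variables will always be evaluated at $X_j=P_i^{(j)}\circ\phi(t)$.

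For the base case $k=1$, differentiating the identity $P_i(\phi(t))=g(t)$ yields $P_i'(\phi(t))\,\phi'(t)=C$, so $\phi'(t)=C/P_i'(\phi(t))$, which is the claimed form with $R_1(X_1)=C$ (total degree $0=k-1$) and denominator exponent $2k-1=1$.

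For the inductive step, assume $\phi^{(k)}(t)=R_k(P_i'(\phi),\dots,P_i^{(k)}(\phi))/P_i'(\phi)^{2k-1}$ with $\deg R_k=k-1$. Differentiating by the quotient and chain rules, and using $\phi'=C/P_i'(\phi)$ once to rewrite every occurrence of $\phi'$, the numerator of $\phi^{(k+1)}$ splits into two pieces:
\begin{equation*}
C\Bigl[\sum_{j=1}^{k}(\partial_{X_j}R_k)\cdot P_i^{(j+1)}(\phi)\cdot P_i'(\phi)^{2k-2}-(2k-1)\,R_k\cdot P_i'(\phi)^{2k-3}\cdot P_i''(\phi)\Bigr],
\end{equation*}
over the denominator $P_i'(\phi)^{2(2k-1)}$. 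Factoring $P_i'(\phi)^{2k-3}$ out of the bracket and cancelling against the denominator produces exactly $P_i'(\phi)^{2(k+1)-1}=P_i'(\phi)^{2k+1}$ below. The remaining numerator
\begin{equation*}
R_{k+1}:=C\Bigl[\sum_{j=1}^{k}(\partial_{X_j}R_k)\,X_{j+1}\,X_1-(2k-1)\,R_k\,X_2\Bigr]
\end{equation*}
is a polynomial in $X_1,\dots,X_{k+1}$; each summand of the first sum has total degree $(\deg R_k-1)+1+1=k$, and the second summand has degree $(k-1)+1=k$, giving $\deg R_{k+1}=k=(k+1)-1$, which closes the induction.

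The only subtle point is the bookkeeping of the exponent of $P_i'(\phi)$: one must check that the factor $P_i'(\phi)^{2k-3}$ really divides both terms of the numerator before cancellation, which is why I introduce $\phi'=C/P_i'(\phi)$ in both branches of the product rule simultaneously. Apart from this cancellation, the argument is purely formal, so I do not anticipate any genuine obstacle.
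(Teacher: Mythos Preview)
Your induction is correct and is precisely the natural argument; the paper in fact states this lemma without proof, so there is nothing substantive to compare. One minor point: from ``each summand has total degree $k$'' you conclude $\deg R_{k+1}=k$, but strictly this only yields $\deg R_{k+1}\le k$, since the top-degree parts could in principle cancel. For the use made of the lemma in the paper (bounding the number of zeros of the numerator) only the upper bound $\deg R\le k-1$ is needed, so this does not affect anything. Your remark about the ``subtle'' divisibility by $P_i'(\phi)^{2k-3}$ is harmless even when $2k-3<0$ (i.e.\ $k=1$): you are computing in the field of rational functions in $P_i'(\phi)$, and the final numerator $R_{k+1}$ you exhibit is visibly a polynomial, which is all that matters.
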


In particular the numerator in the above lemma is a polynomial of
degree at most $k(r-1)$ in $P_i^{-1}(P_i(a)+.(P_i(b)-P_i(a)))$. By Faa di Bruno formula it follows that
$\left(P_j\circ P_i^{-1}\left[P(a)+.(P(b)-P(a))\right]\right)^{(k)}$ may be written as a rational function with a polynomial numerator of degree at most $(k+1)(r-1)$ in $P_i^{-1}(P_i(a)+.(P_i(b)-P_i(a)))$ and therefore
$\left(P_j\circ P_i^{-1}(P_i(a)+.(P_i(b)-P_i(a)))\right)^{(k)}$ has at most $(k+1)(r-1)$
zeroes in $[0,1]$. Thus up to subdivide $[a,b]$ into at most $mr^3$
intervals one can assume
$\left(P_j\circ P_i^{-1}(P_i(a)+.(P_i(b)-P_i(a)))\right)^{(k)}$  for $k=1,\cdots,r+1$
have constant sign.  We reparametrize all
these subintervals affinely from $[0,1]$. Note that after this first
step we get at most $Cm^3r^4$ reparametrizations.\\

\emph{ Third step : $\|P_j\circ\phi\|_r\leq 1$.}
We let $H:[0,1]\rightarrow \mathbb{R}$ be a $C^{r+1}$ function such that the derivatives $(H^{(k)})_{k=2,...,r+1}$ have constant signs and such that $\|H\|_1\leq 1$. We will show that $\|(H\circ Q_r)^{(k)}\|_\infty\leq Cr^{4k}$ for the reparametrization $Q_r:[0,1]\rightarrow [0,1]$ defined in the following lemma. Then to conclude the proof of Proposition \ref{grone} one apply this result to the maps $H=P_j\circ\phi$ where $\phi$ are the reparametrizations obtained at the end of the second step.

\begin{Lem}
There exists a unique polynomial $Q_r$ of degree $2r-1$ such that
$Q(0)=0$, $Q(1)=1$ and
 $Q^{(k)}(0)=Q^{(k)}(1)=0$ for $k=1,\cdots,r-1$.

Moreover $Q_r$ satisfies the following properties :
\begin{itemize}
\item $Q_r$ satisfies the functional equation $1-Q(1-X)=Q(X)$;\\
\item $Q_r$ is an homeomorphism from $[0,1]$ onto itself;\\
\item $Q'_r(X)=b_rX^{r-1}(1-X)^{r-1}$ where $1/b_r=\beta(r,r)$ where $\beta$ is the usual $\beta$
function;\\
\item $Q_r(x) \geq b_rx^r(1-x)^r$.
\end{itemize}
\end{Lem}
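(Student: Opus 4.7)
The plan is to \emph{define} $Q_r$ explicitly as the primitive
\[
Q_r(X) := b_r \int_0^X t^{r-1}(1-t)^{r-1}\, dt, \qquad b_r := 1/\beta(r,r),
\]
and then verify that this $Q_r$ has all the claimed properties in order. First I would check that $Q_r$ is a polynomial of degree $2r-1$ (since the integrand is a polynomial of degree $2r-2$), that $Q_r(0)=0$ (obvious), that $Q_r(1)=1$ (by the choice of $b_r$), and that $Q_r^{(k)}(0)=Q_r^{(k)}(1)=0$ for $k=1,\ldots,r-1$ (because $Q_r'(X)=b_r X^{r-1}(1-X)^{r-1}$ has zeros of order $r-1$ at both endpoints). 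The third bullet of the lemma is then automatic from the very definition.

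Next, for uniqueness, I would argue by dimension counting: if $\widetilde{Q}$ is another polynomial of degree $\le 2r-1$ meeting the same interpolation conditions, then $P := Q_r-\widetilde{Q}$ vanishes to order $r$ at both $0$ and $1$, so $P$ is divisible by $X^r(1-X)^r$. Since $\deg P \le 2r-1 < 2r$, this forces $P=0$.

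For the functional equation, I would set $\widetilde{Q}(X) := 1 - Q_r(1-X)$ and observe that the conditions defining $Q_r$ are preserved under the substitution $X \mapsto 1-X$ combined with $Y \mapsto 1-Y$: indeed $\widetilde{Q}(0)=1-Q_r(1)=0$, $\widetilde{Q}(1)=1-Q_r(0)=1$, and the vanishing of the $k$-th derivative at the endpoints is inherited (differentiation through $1-X$ only introduces a sign $(-1)^k$). By the uniqueness just established, $\widetilde{Q}=Q_r$. For the homeomorphism property, $Q_r'(X)=b_r X^{r-1}(1-X)^{r-1}\ge 0$ on $[0,1]$ with strict positivity on $(0,1)$, so $Q_r$ is strictly increasing from $Q_r(0)=0$ to $Q_r(1)=1$, hence a homeomorphism of $[0,1]$.

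Finally, for the lower bound, I would directly estimate the integral. For $x \in [0,1/2]$ the integrand $t^{r-1}(1-t)^{r-1}$ is increasing on $[0,x]$, so integrating a lower envelope on a subinterval such as $[x/2,x]$ yields an estimate of the form $Q_r(x) \gtrsim b_r\, x^r (1-x)^{r-1}$, and since $1-x\le 1$ this also gives a bound comparable with $b_r x^r(1-x)^r$ up to a multiplicative factor. The range $x\in [1/2,1]$ is handled by the functional equation, which gives $Q_r(x)=1-Q_r(1-x) \ge 1/2$, and one then compares with the right-hand side, bounded above by $b_r/4^r$. The delicate point here is tracking constants so that the stated inequality is obtained with the exact right-hand side (and not just up to a factor depending on $r$); this will be the main technical obstacle, since the naive integral bound loses a factor of order $r$. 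If a clean proof of the precise constant is not available, a slight weakening of the form $Q_r(x)\ge (b_r/r)\, x^r(1-x)^r$ would still feed into the third step of the proof of Proposition~\ref{grone} without altering the polynomial order of the final bound.
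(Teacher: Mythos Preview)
Your handling of existence, uniqueness, the functional equation, and the homeomorphism property is correct and matches what the paper intends when it declares these items ``easy to check.''

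The gap is in the last inequality, where you correctly sense that your subinterval estimate loses a factor of~$r$. The paper avoids this entirely with a one-line trick you are missing: substitute $t = ux$ in the defining integral to get
\[
Q_r(x) \;=\; b_r\, x^r \int_0^1 u^{r-1}(1-ux)^{r-1}\,du,
\]
and then use the elementary factorisation $1 - ux \ge (1-u)(1-x)$, valid for all $u,x\in[0,1]$. This yields immediately
\[
Q_r(x) \;\ge\; b_r\, x^r (1-x)^{r-1} \int_0^1 u^{r-1}(1-u)^{r-1}\,du \;=\; x^r(1-x)^{r-1},
\]
with no loss whatsoever. (The paper's displayed final line $\ge b_r\, x^r(1-x)^r$ is actually a slip: it does not follow from the preceding line and in fact fails near $x=0$ for $r\ge 2$, since $Q_r(x)\sim b_r x^r/r$ there. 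The bound $x^r(1-x)^{r-1}$ that the argument genuinely produces is what feeds correctly into the remainder of the proof of Proposition~\ref{grone}.) So your instinct that the stated constant was delicate was on target; the resolution is not a weakening but simply the change of variable $t=ux$ combined with $1-ux\ge(1-u)(1-x)$.
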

\begin{proof}
We only prove the last item (the other statements are easy to
check). By the third item we have

$$Q_r(x)=\int_0^x b_rt^{r-1}(1-t)^{r-1}dt. $$
Then by considering the change of variable $t=ux$ we get
\begin{eqnarray*}
Q_r(x)&=&x^r\int_0^1 b_ru^{r-1}(1-ux)^{r-1}du\\[2mm]
&\geq & x^r\int_0^1 b_ru^{r-1}\left((1-u)(1-x)\right)^{r-1}du\\[2mm]
&\geq & x^r(1-x)^r b_r.
\end{eqnarray*}\end{proof}

Fix $1\leq k\leq r$, $1\leq l\leq k$ and
$\underline{j}:=(j_1,j_2,\cdots,j_{k-l+1})$ as in Faa di bruno
formula (Lemma \ref{Bruno}) and consider the polynomial
$$T_{l,\underline{j}}:=\left(\frac{Q_r'}{1!}\right)^{j_1}\left(\frac{Q_r''}{2!}\right)^{j_2}\cdots
\left(\frac{Q_r^{(k-l+1)}}{(k-l+1)!}\right)^{j_{k-l+1}}.$$
 We let
$S$ be the polynomial $S(X):=X(1-X)$. Recall $Q'_r=b_rS^r$ and
$\|S\|_\infty=1/4$.

\begin{Lem}\label{Qderivative}
Let $0\leq i\leq k-1$. Then there exists a polynomial $R_i$ with
$\|R_i\|_\infty\leq (r/2)^ii!$ such that
$$Q_r^{(i+1)}=b_rS^{r-i}R_i \,. $$
In particular as $b_r=\frac{(2r-1)!}{(r-1)!^2}\leq C\sqrt{r}2^{2r}$,
we have

$$Q_r^{(i+1)}\leq C\sqrt{r}2^{2i}\|R_i\|_\infty\leq C\sqrt{r}(2r)^ii!\leq Cr^{2k}. $$
\end{Lem}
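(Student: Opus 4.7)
Since $Q'_r = b_r S^r$, the claim $Q_r^{(i+1)} = b_r S^{r-i} R_i$ is equivalent to showing $(S^r)^{(i)} = S^{r-i} R_i$ for some polynomial $R_i$, so the plan is to make $R_i$ explicit via Leibniz's rule applied to the factorization $S^r = X^r(1-X)^r$, and then bound $\|R_i\|_{\infty,[0,1]}$ by the triangle inequality. An induction on $i$ using the recurrence $R_{i+1} = (r-i) S' R_i + S R_i'$ (obtained by differentiating $S^{r-i} R_i$) would also yield the identity, but the closed form is cleaner for the sup-norm estimate.

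For $i \le r$, $(X^r)^{(j)} = r^{\underline j}\, X^{r-j}$ and $((1-X)^r)^{(i-j)} = (-1)^{i-j} r^{\underline{i-j}}(1-X)^{r-i+j}$, where $r^{\underline k} := r(r-1)\cdots(r-k+1)$ is the falling factorial. Substituting into Leibniz's formula and factoring $X^{r-i}(1-X)^{r-i} = S^{r-i}$ out of each term gives
\begin{equation*}
(S^r)^{(i)} = S^{r-i} \sum_{j=0}^i (-1)^{i-j} \binom{i}{j} r^{\underline j}\, r^{\underline{i-j}}\, X^{i-j}(1-X)^j =: S^{r-i}\, R_i(X).
\end{equation*}
This confirms the identity and shows $R_i$ has degree at most $i$. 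Since $|X^{i-j}(1-X)^j| \le 1$ on $[0,1]$ and $r^{\underline k} \le r^k$, the triangle inequality yields
\begin{equation*}
\|R_i\|_{\infty,[0,1]} \le \sum_{j=0}^i \binom{i}{j} r^j r^{i-j} = (2r)^i,
\end{equation*}
which is within a uniform multiplicative factor of $(r/2)^i\, i!$ since $4^i/i!$ is bounded in $i$ (its maximum, attained near $i=4$, is about $11$). This gives the claimed bound on $R_i$, up to absorbing a harmless absolute constant into the statement.

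For the last line of the lemma, Stirling applied to $b_r = (2r-1)!/(r-1)!^2$ yields $b_r \le C\sqrt{r}\,2^{2r}$, and $\max_{[0,1]} S = 1/4$ gives $\|S^{r-i}\|_\infty = 4^{i-r}$. Combining,
\begin{equation*}
\|Q_r^{(i+1)}\|_\infty \le C\sqrt{r}\,2^{2r}\cdot 4^{i-r}\cdot \|R_i\|_\infty = C\sqrt{r}\,2^{2i}\,\|R_i\|_\infty,
\end{equation*}
which is bounded by $Cr^{2k}$ in the regime $i\le k\le r$ relevant to the subsequent reparametrization argument. The only real obstacle here is bookkeeping of the combinatorial constants; the Leibniz expansion makes the entire estimate transparent, with no analytic input beyond Stirling and the trivial bounds $r^{\underline k}\le r^k$ and $|X^a(1-X)^b|\le 1$ on $[0,1]$.
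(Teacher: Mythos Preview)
Your approach is correct and genuinely different from the paper's. The paper argues by induction on $i$: starting from $R_0=1$ and the recurrence $R_{i+1}=(r-i)S'R_i+SR_i'$, it observes that $\deg R_i=i$ and then invokes the Markov inequality $\|R_i'\|_\infty\le 2i^2\|R_i\|_\infty$ to propagate the bound $\|R_i\|_\infty\le (r/2)^i i!$ step by step. You instead use the factorization $S^{r}=X^{r}(1-X)^{r}$ and Leibniz's rule to write down $R_i$ in closed form, then bound it termwise to get $\|R_i\|_\infty\le (2r)^i$ directly. Your route is more elementary (no Markov inequality, no induction) and actually yields a sharper bound for large $i$; the price is that you do not literally obtain $(r/2)^i i!$ but only the equivalent bound up to the absolute constant $\sup_i 4^i/i!$, which, as you note, is harmless for the downstream estimate $\|Q_r^{(i+1)}\|_\infty\le C r^{2k}$. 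The paper's inductive recurrence, on the other hand, makes the degree $\deg R_i=i$ transparent and gives the exact constant stated in the lemma.

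One small slip to fix: the paper's earlier formula is $Q_r'(X)=b_r X^{r-1}(1-X)^{r-1}=b_r S^{r-1}$, not $b_r S^r$ as you wrote. (The lemma as stated in the paper has the same off-by-one inconsistency with $R_0=1$.) Replacing $r$ by $r-1$ in your Leibniz computation gives $(S^{r-1})^{(i)}=S^{r-1-i}R_i$ with the analogous closed form and $\|R_i\|_\infty\le (2(r-1))^i\le (2r)^i$; the extra factor of $4$ from $\|S^{r-1-i}\|_\infty$ versus $\|S^{r-i}\|_\infty$ is absorbed into $C$. So the correction is cosmetic and your argument goes through unchanged.
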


\begin{proof}
We argue by induction on $i$. Observe $R_0=1$. The polynomials $R_i$
satisfies the following property :
$$R_{i+1}=(r-i)S'R_i+SR_i'\,.$$
 In
particular the degree of $R_i$ is equal to $i$. Now by Markov
inequality, $$\|R'_{i}\|_{\infty} \leq
2i^2\|R_{i}\|_{\infty}$$ (the norm $\|\cdot \|_\infty$ is the
classical supremum norm over $[0,1]$) and therefore
\begin{eqnarray*} \|R_{i+1}\|_\infty & \leq &
(r-i)\|S'\|_\infty\|R_{i}\|_\infty+\|S\|_\infty\|R_i'\|_\infty\\[2mm]
&\leq & \|R_{i}\|_\infty(r-i +i^2/2)\\[2mm]
&\leq & \|R_{i}\|_\infty r(i+1)/2\\[2mm]
&\leq & (r/2)^{i+1}(i+1)!.
\end{eqnarray*}

\end{proof}

Let us  bound from above the supremum norm of $H^{(l)}\circ Q_r
\times T_{l,\underline{j}}$ over $[0,1]$.

\begin{eqnarray*} \|H^{(l)}\circ Q_r \times
T_{l,\underline{j}}\|_\infty &\leq &
 b_r^l\|H^{(l)}\circ Q_r \times S^{(r+1)l-k}\|_\infty \times
 \prod_{i=1}^{k-l+1}\left(\|R_{i-1}\|_\infty/i!\right)^{j_i}\\[2mm]
&\leq & b_r^l(r/2)^{k}\|H^{(l)}\circ Q_r \times
S^{(r+1)l-k}\|_\infty\\[2mm]
&\leq & b_r^l(r/2)^{k}\|H^{(l)}\circ Q_r \times
S^{r(l-1)}\|_\infty\\[2mm]
&\leq & (r/2)^{k}\|H^{(l)}\circ Q_r \times
Q_r^{l_1}Q_r(1-.)^{l_2}\|_\infty.
\end{eqnarray*}
where $l_1$ (resp. $l_2$) is the number of $2\leq m\leq l$ such that
$|P^{(m)}|$ is non-increasing (resp. nondecreasing).

Consider finally the term $\|H^{(l)}\circ Q_r \times
Q_r^{l_1}Q_r(1-.)^{l_2}\|_\infty$. Assume first that $|H^{(l)}|$ is
non-increasing on $[0,1]$. Then we have for all $2\leq m\leq l$

$$|H^{(l)}(1-Q_r(1-x)m/l)|\leq |H^{(l)}(1-Q_r(1-x))|=|H^{(l)}\left(Q_r(x)\right)|$$
and

\begin{eqnarray*}
|H^{(l)}(Q_r(x)m/l)\times Q_r(x)/l|&\leq &
\left|\int_{Q_r(x)(m-1)/l}^{Q_r(x)m/l}H^{(l)}(t)dt\right|\\[2mm]
&\leq& \max(H^{(l-1)}(Q_r(x)m/l),H^{(l-1)}(Q_r(x)(m-1)/l).
\end{eqnarray*}


When $|H^{(l)}|$ is nondecreasing on $[0,1]$ we get symmetrically

$$|H^{(l)}(Q_r(x)m/l)|\leq |H^{(l)}(Q_r(x))|=|H^{(l)}(Q_r(1-x)|$$
and
\begin{eqnarray*}
&&\|H^{(l)}(1-Q_r(1-x)m/l) \times Q_r(1-x)/l \|_\infty\\[2mm] &\leq &
\max\left(H^{(l-1)}(1-Q_r(1-x)m/l),H^{(l-1)}(1-Q_r(x)(m-1)/l\right).
\end{eqnarray*}
By an easy induction one obtains $$\|H^{(l)}\circ Q_r \times
Q_r^{l_1}Q_r(1-.)^{l_2}\|_\infty\leq l^{l-1}\|H'\|_\infty\leq l^l,
$$ and then $$\|H^{(l)}\circ Q_r \times
T_{l,\underline{j}}\|_\infty \leq (\frac{r}{2})^{k}\cdot l^{l}.$$
Finally by the identity $B_{k,l}(1!,\cdots,(l-k+1)!)=
C_k^lC_{k-1}^{l-1}(k-l)!$ we get
\begin{eqnarray*}
\|(H\circ Q_r)^{(k)}\|_\infty&\leq&\sum_{l=1}^k  (\frac{r}{2})^{k}\cdot l^{l}\cdot B_{k,l}(1!,\cdots,(l-k+1)!)\\
&\leq & \sum_{l=1}^k   r^{k}\cdot l^{l} \cdot C_k^lC_{k-1}^{l-1}(k-l)!\\
&\leq &\sum_{l=1}^k  r^kl^{l} k^{2k}/l!\\
&\leq & Cr^{4k}.
\end{eqnarray*}
We have also for $1\leq k\leq r$ by Lemma \ref{Qderivative},
$$\|Q_r^{(k)}\|\leq Cr^{2k}\leq Cr^{4k}.$$
To conclude the proof of Lemma \ref{ygpol},  subdivide the unit
interval into at most $[Cr^4]+1$ intervals $I$ of length  $1/Cr^4$
and let $\psi_I$ be the affine reparametrizations from $[0,1]$ of
$I$. One easily checks that $\|Q_r\circ\psi_I\|_r,\|P\circ Q_r\circ
\psi_I\|_r\leq 1$ so that the family of reparametrization $Q_r\circ
\psi_I$ satisfy the conclusions of Lemma \ref{ygpol}.

 \hfill $\Box$

\subsection{Surface diffeomorphisms: Proof of Corollary \ref{surfh}}

  With the assumptions in Corollary \ref{surfh}, $\cM=(k^{k^2})_k$, we have  for all integers $k\neq 0$
$$ \frac{\log M_k}{k}=k\log k,$$
 and thus $G_\cM$ may be bounded from above as
follows :
\begin{eqnarray*}
l&=& \frac{\log M_k}{k} \\[2mm]
&= & k\log k; \\[2mm]
\log l& \geq& \log k,
\end{eqnarray*}
and then the function $G_\cM$  satisfies :
\begin{eqnarray*}
G_\cM(l)&=&k\\[2mm]
&=  & \frac{ l}{\log k}\\[2mm]
&\geq & \frac{ l}{\log l }.
\end{eqnarray*}
Thus, with $C=C(f,\cM)$ the constant in Corollary \ref{oups},

\begin{eqnarray*}
h_{\loc}(f,\vep)&\leq &v_1^*(f,2\vep)\\[2mm]
&\leq &\frac{C}{ G_\cM(|\log (2C\vep)|/2)}\\ [2mm]
&\leq &C\frac {2\log\left(|\log(2C\vep)|/2\right)}{|\log(2C(\vep)|}\\[2mm]
&\leq &\frac {\widetilde{C}(f,\cM)\log|\log\vep|}{|\log\vep|}
\end{eqnarray*}  for some
constant $\widetilde{C}(f, \cM)$.

\begin{Rem}
Corollary \ref{surfh} holds also true for local surface
diffeomorphisms. In fact one has again in this  case
$h_{\loc}(f,\vep)\leq v_1^*(f,2\vep)$ for any $\vep$ small enough.
Indeed it was proved for local diffeomorphisms in \cite{burex}
(Theorem 5) that there exists $\vep>0$ such that any invariant
measure $\mu$ with $h_{\loc}(\mu,\vep)>0$
 has at least one negative Lyapunov exponent.
\end{Rem}

\section{The case of one dimensional multimodal maps}

We prove in this section all the results related to one dimensional dynamics :  Theorem \ref{wmulti}, Theorem \ref{quasionedim} and Theorem \ref{homone}. We will make use
of the following lemma of analysis.

\begin{Lem}\label{easy}
Let $k\geq 1$ and $f$ be a $C^{k+1}$ map of the interval $I$.
 If the derivative $f'$ of $f$ vanish at $x_1<x_2<\cdots<x_k$  then for any $x\in I$ we have
$$|f'(x)|\leq \|f^{(k+1)}\|_\infty |I|^{k}.$$
\end{Lem}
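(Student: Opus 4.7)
The plan is to apply Rolle's theorem iteratively to build up a chain of zeros for all the higher derivatives of $f$, and then to bootstrap pointwise bounds on the derivatives by repeated integration from those zeros.

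First I would iterate Rolle's theorem: since $f'$ vanishes at the $k$ points $x_1<\cdots<x_k$, applying Rolle to $f'$ on each consecutive subinterval $[x_i,x_{i+1}]$ produces at least $k-1$ zeros of $f''$ in $I$. Repeating, for each $j=1,\dots,k$ the derivative $f^{(j)}$ has at least $k-j+1$ zeros in $I$; in particular $f^{(k)}$ has at least one zero $y \in I$, and more generally for each $j=1,\dots,k$ I can fix a point $z_j \in I$ at which $f^{(j)}$ vanishes.

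Next I would prove by downward induction on $j$ that $|f^{(j)}(x)| \leq \|f^{(k+1)}\|_\infty |I|^{k+1-j}$ for every $x\in I$. The base case $j=k$ follows from
$$|f^{(k)}(x)|=\left|\int_{z_k}^x f^{(k+1)}(t)\,dt\right|\leq \|f^{(k+1)}\|_\infty |I|.$$
For the inductive step, assuming the bound for $j+1$, I use that $f^{(j)}(z_j)=0$ so that
$$|f^{(j)}(x)|=\left|\int_{z_j}^x f^{(j+1)}(t)\,dt\right|\leq \|f^{(j+1)}\|_\infty |I|\leq \|f^{(k+1)}\|_\infty |I|^{k+1-j}.$$
Specializing to $j=1$ yields exactly $|f'(x)| \leq \|f^{(k+1)}\|_\infty |I|^{k}$, which is the claim.

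No step is really an obstacle here; both ingredients (Rolle's theorem and the fundamental theorem of calculus) are elementary, and the only thing to be careful about is that the induction is carried out in the right direction (decreasing $j$) so that the zero $z_j$ of $f^{(j)}$ is always available from the Rolle-chain produced in the first step. An alternative more compact derivation would use the Lagrange/Hermite interpolation remainder applied to $f'$ at the nodes $x_1,\dots,x_k$, which gives $f'(x)=\frac{f^{(k+1)}(\xi)}{k!}\prod_i(x-x_i)$ and hence the sharper bound $|f'(x)|\leq \frac{1}{k!}\|f^{(k+1)}\|_\infty |I|^k$; but the Rolle-plus-integration route above is self-contained and yields the stated inequality directly.
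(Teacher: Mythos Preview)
Your proof is correct and follows essentially the same approach as the paper: obtain a zero of each $f^{(j)}$ ($1\le j\le k$) via iterated Rolle, then integrate downward from $f^{(k+1)}$ using those zeros as base points to accumulate one factor of $|I|$ at each step. The paper's argument is just a terser version of exactly this, and your remark about the sharper $1/k!$ constant via the interpolation remainder is a nice bonus.
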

\begin{proof}By the assumptions, for any $1\leq l \leq k$, there exists $y_l\in [x_1,x_k]$ such that
$f^{(l)}(y_l)=0$. Therefore, for any $x\in I$,
\begin{eqnarray*}|f^{(k)}(x)|&=&|\int_{y_k}^{x}f^{(k+1)}(z)dz|\leq  |I|\|f^{(k+1)}\|_\infty;\\
|f^{(k-1)}(x)|&=&|\int_{y_{k-1}}^{x}f^{(k)}(z)dz|\leq  |I|^2\|f^{(k+1)}\|_\infty;\\
&\vdots&\\
|f'(x)|&=&|\int_{y_{1}}^{x}f^{(2)}(z)dz|\leq
|I|^{k}\|f^{(k+1)}\|_\infty.
 \end{eqnarray*}
\end{proof}

\noindent{\it Proof of Theorem \ref{quasionedim}}.\quad Let $f$ be a
$C^l$ $l$-multimodal map of the unit interval. By Proposition 2.5
\footnote{In \cite{LVY} the authors consider $h\left(f,
B_{+/-\infty}(f,x,\vep)\right)$ with
$B_{+/-\infty}(f,x,\vep)=\bigcap_{n\in
\mathbb{Z}}f^{-n}B(f^nx,\vep)$ for an homeomorphism $f$, but the
proof applies also in the noninvertible case with
$h\left(f,B_\infty(f,x,\vep)\right)$.}
 of \cite{LVY} it is enough to prove $ h\left(f, B_\infty(f,x,\vep)\right)\leq  \frac{\log^+ \|f\|_{l}}{|\log \vep|}$ for
  $\mu$ almost every $x$ of any invariant ergodic measure $\mu$. Let $\mu$ be such a measure.

  Fix $x\in [0,1]$ and $\vep>0$. Let $n\in\NN$ and $\vep>\delta>0$.
 It is easily seen that the maximal cardinality of an $(n,\delta)$ separated set in $B_n(f,x,\vep)$ is not more  than the
$n/\delta$ time the number of  monotonic  branches of $f^n$ intersecting
$B_n(f,x,\vep)$ (see for example \cite{DM09}). But the number of such
$f^n$-monotonic  branches is
 less than $\prod_{k=0}^{n-1}M_{f^kx,\vep}$ where $M_{y,\vep}\leq l$  is the
  number of $f$-monotonic branches in the $\vep$-ball at $y\in [0,1]$.
 Therefore we have for all  $n\in \NN$ and for all $0<\delta<\vep$
$$r_n(f,B_n(f,x,\vep),\delta)\leq \frac{n}{\delta}  \prod_{k=0}^{n-1}M_{f^kx,\vep}.$$
By the ergodic theorem, for $\mu$-almost  every $x$, the sequence  $\left(\frac{1}{n}   \sum_{k=0}^{n-1}
\log M_{f^kx,\vep}\right)_n$ converges to $M_{\mu,\vep}:=\int \log M_{y,\vep}d\mu(y)$ so that
 \begin{eqnarray}\label{upper bound}h(f,B_\infty(f,x,\vep))& \leq &
\lim_n\frac{1}{n}   \sum_{k=0}^{n-1}
\log M_{f^kx,\vep}; \nonumber\\
&\leq & M_{\mu,\vep} .\end{eqnarray}

Let
$t_n=\max(1,\max_{m=0,...,n-1}\prod_{k=0}^{m}\|f'|_{B(f^kx,\vep)}\|)$
and let $E\subset B_n(f,x,\vep)$ be a $\delta/t_n$ covering set of
the dynamical ball $B_n(f,x,\vep)$. Then $E$ is also
   an $(n,\delta)$-spanning
set of $B_n(f,x,\vep)$. Indeed for any  $y\in B_n(f,x,\vep)$ there
exists $z\in E$ with $d(y,z)<\delta/t_n\leq\delta$. Noting that
$B(x,\vep)$ is connected, so $d(fy,fz)\leq
d(y,z)\|f'|_{B(x,\vep)}\|\leq \delta$, $d(f^2y,f^2z)\leq
d(fy,fz)\|f'|_{B(fx,\vep)}\|\leq
d(y,z)\|f'|_{B(x,\vep)}\|\|f'|_{B(fx,\vep)}\|\leq \delta$, and
therefore by induction for any $1\leq m<n$ we have
$$d(f^{m}(y), f^{m}(z))\leq d(y,z)
\prod_{k=0}^{m-1}\|f'|_{B(f^kx,\vep)}\|\leq\delta.$$ We can take
$\sharp E\leq t_n/\delta$. It follows that
\begin{eqnarray*}
r_n(f,B_n(f,x,\vep),\delta)  \leq \sharp E\leq
\frac{1}{\delta}\max\left(1,\max_{m=0,...,n-1}\prod_{k=0}^{m}\|f'|_{B(f^kx,\vep)}\|\right).
\end{eqnarray*}

  By the  ergodic theorem $\left(\frac{1}{n}\left(\sum_{k=0}^{n-1}\log\|f'|_{B(f^kx,\vep)}\|\right)\right)_n$
 converges  for $\mu$-almost every $x$. Then for such $x$ one easily sees  that  if this limit to be
positive then $\left(\frac{1}{n}\max_{0\leq
m<n}\left(\sum_{k=0}^{m}\log\|f'|_{B(f^kx,\vep)}\|\right)\right)_n$
converges to the same limit; if this limit to be nonpositive then
$\left(\frac{1}{n}\max_{0\leq
m<n}\left(\sum_{k=0}^{m}\log\|f'|_{B(f^kx,\vep)}\|\right)\right)_n$
converges to 0.   We may assume this limit to be positive.

 Observe now that $f'$ has at least $M_{y,\vep}-1$ zeroes in $B(y,\vep)$. By Lemma
\ref{easy}, we get $$\|f'|_{B(y,\vep)}\|_\infty\leq
\|f^{(M_{y,\vep})}\|_\infty\,\vep^{M_{y,\vep}-1},$$
and then
\begin{eqnarray}\label{dm}
\prod_{k=0}^{n-1}\|f'|_{B(f^kx,\vep)}\|&\leq & \max\prod_{k=0}^{n-1}
\|f^{(M_{f^kx,\vep})}\|\vep^{M_{f^kx,\vep}-1};\\[2mm]
&\leq &
\vep^{\sum_{k=0}^{n-1}M_{f^kx,\vep}}\vep^{-n}
\|f\|_l^{n}.\nonumber
\end{eqnarray}
By  geometric-arithmetic mean inequality we get
$$\sum_k^{n-1}M_{f^kx,\vep}\geq n\left(\prod_{k=0}^{n-1}M_{f^kx,\vep}\right)^{1/n}.$$
Therefore,
 \begin{eqnarray*}
\sum_{k=0}^{n-1}\log\|f'|_{B(f^kx,\vep)}\|
& \leq & n
\left(\left(\prod_{k=0}^{n-1}M_{f^kx,\vep}\right)^{1/n}-1\right)\log\vep+n\log^+
\|f\|_{l};\\
h(f,B_\infty(f,x,\vep)) &\leq & \max\left(\limsup_n\frac{1}{n}\max_{0\leq m<n}\left(\sum_{k=0}^{m}\log\|f'|_{B(f^kx,\vep)}\|\right),\,\,0\right)\\
&= & \max\left(\lim_n\frac{1}{n}\left(\sum_{k=0}^{n-1}\log\|f'|_{B(f^kx,\vep)}\|\right),0\right)\\
&\leq & \max\left( (e^{M_{\mu,\vep}}-1)\log\vep+\log
^+\|f\|_{l},\,\, 0\right).
\end{eqnarray*}
and  then by combining with (\ref{upper bound}) we get for $\mu$-almost every $x$
\begin{eqnarray*}
h(f,B_\infty(f,x,\vep)) & \leq & \min\left(\max\left(
(e^{M_{\mu,\vep}}-1)\log\vep+\log
^+\|f\|_{l},\,\,0\right),M_{\mu,\vep}\right).
\end{eqnarray*}

Now  we maximize the right hand
side in $M_{\mu,\vep}$. It is maximal when $M_{\mu,\vep}=a$ where $a$ is the solution of
$(e^{a}-1)\log \vep +\log^+ \|f\|_{l}=a$.  We have therefore
\begin{eqnarray*}
a\log\vep +\log^+ \|f\|_{l}&\geq& a;\\[2mm]
a&\leq& \frac{\log^+\|f\|_{l}}{1-\log\vep}\leq
\frac{\log^+\|f\|_{l}}{|\log\vep|}.
\end{eqnarray*}
The proof of Theorem \ref{quasionedim} is completed. \hfill $\Box$

\begin{Rem}The idea of the proof of Theorem
\ref{quasionedim} is related with  the strategy to prove the
existence of symbolic extensions for $C^r$ interval maps in
\cite{DM09}. The production of local entropy by monotonic  branches
is somehow counterbalanced by the decreasing of the Lyapunov
exponents.
\end{Rem}

\noindent{\it Proof of Theorem \ref{wmulti}}.\quad
The proof is very similar to this of Theorem \ref{quasionedim}. As we consider $\vep<L(f)$, any $\vep$ ball meets at most two $f$-monotone branches. Therefore, with the notations of the above proof, we have $M_{x,\vep}=1$ or $2$  for any $x\in [0,1]$ and for any $\vep<L(f)$. Equation (\ref{dm}) may be replaced in this case with $N_x^n:=\sharp\{0\leq k<n, \ M_{f^kx, \vep}=2\}$ by
$$\prod_{k=0}^{n-1}\|f'|_{B(f^kx,\vep)}\|  \leq  \|f'\|_\infty^{n-N_x^n}w(f',\vep)^{N_x^n}.$$
Therefore for $\mu$-almost every $x$ we get
$$h(f,B_\infty(f,x,\vep))\leq \min\left(\left(1-\frac{M_{\mu,\vep}}{\log 2}\right)\log^+\|f'\|+\frac{M_{\mu,\vep}}{\log 2}\log w(f',\vep),M_{\mu,\vep}\right)$$
which leads after optimization to  $$h^*(f,\vep)\leq
\frac{\log2\cdot\log^+ \|f'\|}{\log^+\left( 1/w(f',\vep)\right)}.$$

\begin{Rem}
We only state a rate of convergence for $C^1$ smooth maps in Section \ref{state}. For general continuous multimodal maps, the rate of convergence to zero of the $\vep$-tail entropy may be bounded from above as follows,
$$h^*(f,\vep)\leq \frac{\log 2}{p_\vep}$$
where $p_\vep$ is the largest integer $p$ such that the minimal length of $f^p$-monotone branches, $L(f^p)$, is  larger than $\vep$.

Indeed as in the previous proof of  Theorem \ref{wmulti}, we have
for any multimodal maps $g$, $h^*(g,\vep)\leq \log 2$ for all
$0<\vep<L(g)$. Then by applying this fact to $f^{p_\vep}$ and Lemma
\ref{htailpower} we get
\begin{eqnarray*}
h^*(f,\vep)&\leq &h^*(f^{p_\vep},\vep)/p_{\vep}\\[2mm]
&\leq& \log 2/p_{\vep}.
\end{eqnarray*}

For the tent map, $T(x)=2\max(x,1-x)$, one easily gets that $p_\vep$
is the integer part of $|\log \vep|/\log 2$ and therefore
$h^*(T,\vep) \leq \log4/|\log\vep|$ (one can also prove as in
Theorem \ref{homone} that $h^*(T,\vep)\geq  \log 2/|\log\vep|$).
However it seems quite hard to estimate $p_\vep$ for general
continuous   multimodal maps.
\end{Rem}

\noindent{\it Proof of Theorem \ref{homone}}.\quad To simplify the
exposition we assume $f$ is a $C^2$ unimodal map with a
nondegenerate critical point $c$ (of order $2$) and $\Lambda=P$ is a
hyperbolic repelling fixed point. We call a $2$-horseshoe for $f^p$
a pair of two closed disjoint intervals $J_0,J_1$ such that
$f^p(J_k)\supset J_0\cup J_1$ for $k=0,1$. It is well known that the
$f^p$-invariant set associated to $J_0\cup J_1$ is conjugated  to
the $2$-shift. In particular if $f^l(J_0), f^l(J_1)$ have diameter
less than $\varepsilon$ for all $l=0,...,p$ it will imply that
$h_{\loc}(\mu,\vep)\geq \log 2/p$ with $\mu$ a measure of maximal
entropy of this horseshoe and therefore $h_{\loc}(f,\vep)\geq
\log2/p$. We will prove for any $\vep>0$ the existence of such a
$2$-horseshoe for $f^p$ with $p\leq C|\log\vep|$. The presence of a
horseshoe for interval maps with an homoclinic tangency has
previously been studied by Block in \cite{Block}.

We let $I_\vep$ be the maximal neighborhood of $c$ in
$[c-\vep,c+\vep]$ such that the two connected components of $f$ are
mapped by $f$ on the same interval. Note that $I_\vep$ is of the
form either $[c-\vep',c+\vep]$ or $[c-\vep,c+\vep']$ with $\vep'\leq
\vep$.  For $\vep$ small enough $f^kI_\vep$ has $P$ on its boundary
(recall $f^k(c)=P$ and $c$ is a local extremum of $f$) and its
length is of order $\vep^2$ as $f''(c)\neq 0$. As $c$ belongs to the
unstable manifold of $P$ we may also choose $\vep$ so small  that
$I_\vep\subset W^u(P)$ and then  $l$ large enough such that
$f^{-l}(I_\vep)\subset f^kI_\vep$. For all integers $n$ we have
$f^{-n}(I_\vep)\in B(P,C'e^{-n\lambda(P)/2})$ with
$e^{\lambda(p)}=|f'(P)|>1$
 so  it is enough to take $l=C''|\log\vep|$ for some constant $C''$ independent of $\vep$.
Then one can take $\delta_0,\delta_1>0$ small enough such that the
two connected components $I_\vep\setminus [c-\delta_0,c+\delta_1]$
have the same image by $f$ and
 $f^{-l}(I_\vep)\subset  f^k(I_\vep\setminus [c-\delta_0,c+\delta_1])$.
  This defines a $2$-horseshoe for $f^{k+l}$. For a general hyperbolic repeller one uses Lemma \ref{bound period}.
  It will be explained in details in the next section for surface diffeomorphisms.
  As the argument is the same we do not reproduce it here.  \hfill
  $\Box$ \\

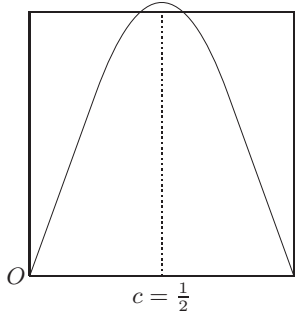
\begin{figure}[h]
\begin{center}
\begin{picture}(200,120)(-70,-12)

\put(-50,0){\line(1,0){100}}

\put(-50,100){\line(1,0){100}}

\put(-50,0){\line(0,1){100}}

\put(50,0){\line(0,1){100}}

\multiput(0,0)(0,2){50}{\line(0,1){0.5}}

\spline(-50,0)(0,138)(50,0)

\put(-55,0){\makebox(0,0){\small $O$}}

\put(0,-8){\makebox(0,0){\small $c=\frac12$}}

$$\, $$

\end{picture}
$$\, $$
\end{center}
\caption{$f(x)=4x(1-x)$ with a homoclinic tangency $c=\frac12$}
\end{figure}

\noindent{\it Proof of Proposition \ref{rrrem}}.\quad
 We consider the quadratic map $f_4$, $f_4(x)=4x(1-x)$. We assign to any  $f_4^n$-monotone branch
$I_n$ an element  $a(I_n)$ of $\{0,1\}^n$, as follows $(a(I_n))_k=0$
if $f^k(I_n)\subset [0,1/2]$ and $(a(I_n))_k=1$ if not. We also let
$x(I_n)$ be the center of $I_n$.  We consider the subshift $Y_p$ of
finite type of $\{0,1\}^\NN$ where we have forbidden the word
$\underbrace{010...0}_{p}$  which correspond to the $f^p$-monotone
branch with the critical point $1/2$ on its right boundary. This $f^p$-monotone branch
has length $\vep:=\vep_p$ with $|\log \vep_p|\underset{p}{\sim}p\log 2$ : indeed the length of
the $f^{p-1}$-monotone branch associated to
$\underbrace{10...0}_{p-1}$ has length $\vep'_p$ with $|\log \vep'_p|\underset{p}{\sim}p\log 4$ and the tangency
at the critical point is quadratic. We also let $Y_p(n)$ be the set of words of
length $n$ in $Y_p$. Clearly $\{x(I_n), \ a(I_n)\in Y_p\}$ is
$(n,\vep)$  separated. Therefore

\begin{eqnarray*}h(f_4,\vep)&\geq &\limsup \frac{1}{n}\log \sharp Y_p(n);\\
& \geq & h(\sigma,Y_p).
\end{eqnarray*}
Finally we have

\begin{eqnarray*}
h(\sigma,Y_p)&=&h(\sigma^p,Y_p)/p;\\
&=&\log (2^p-1)/p;\\
&=&\log 2 -\frac{1}{p2^p}+o(\frac{1}{p2^p}).
\end{eqnarray*}
We conclude that $h(f_4)- h(f_4,\vep)=\log 2- h(f_4,\vep)=
o\left(\frac{\vep^\alpha}{|\log \vep|}\right)$ for any $0<\alpha<
1$.

\section{Modulus of continuity of the topological entropy : proof of Proposition \ref{mod} and some examples}\label{modu}

\noindent{\em Proof of Proposition \ref{mod}:}
 For any $\vep>0$ and any $g\in G$ we have
\begin{eqnarray*}
h(g)&\leq &h(g,\vep)+h_{\loc}(g,\vep);\\[2mm]
&\leq &h(g,\vep)+h^G_{\loc}(\vep);\\[2mm]
&\leq & \frac{1}{p_\vep}\log r_{p_\vep}(g,\vep/2)+h^G_{\loc}(\vep).
\end{eqnarray*}
Now one easily checks by induction on $k$ that
  $d(f^k,g^k)\leq d(f,g)\sum_{l=0}^{k-1} M_0^l\leq  \vep/4$ for any $k=1,\cdots,p_\vep$
 once we have $d(f,g)\leq \frac{\vep}{4} M_0^{-p_\vep}$. Indeed for all $x\in M$ we have
\begin{eqnarray*}
d(f^kx,g^kx)&\leq & d(gf^{k-1}x,g^kx)+d(f^{k}x,gf^{k-1}x).
\end{eqnarray*}
and then by induction hypothesis \begin{eqnarray*}
d(f^kx,g^kx)&\leq & M_0d(f^{k-1}x,g^{k-1}x)+d(f,g);\\[2mm]
&\leq & d(f,g)\sum_{l=0}^{k-1} M_0^l;\\[2mm]
&\leq & \frac{\vep}{4} M_0^{-p_\vep} \frac{M_0^{p_\vep}-1}{M_0-1}\leq \frac{\vep}{4}.
\end{eqnarray*}
In this case we have then $r_{p_\vep}(g,\vep/2)\leq
r_{p_\vep}(f,\vep/4)$ and finally we obtain according to the choice
of $p_\vep$ :

\begin{eqnarray*}
h(g)&\leq &
\frac{1}{p_\vep}\log r_{p_\vep}(f,\vep/4)+h^G_{\loc}(\vep);\\[2mm]
&\leq &  h(f,\vep/4)+2h^G_{\loc}(\vep);\\[2mm]
&\leq & h(f)+2h^G_{\loc}(\vep).
\end{eqnarray*}
This concludes the proof of Proposition \ref{mod}.\hfill $\Box$

A continuous dynamical system $f$ is said to satisfy the property
$(P)$ if for $\vep$ small enough we have
 $$\frac{1}{n}\log r_n(f,\vep)-h(f,\vep) \simeq \frac{|\log \vep |}{n},$$
i.e. there exists $C>1$ and $\zeta(f)>0$ such that for all
$\zeta(f)>\vep>0$ and for all integers $n$ we have
$$ \frac{|\log \vep |}{Cn}\leq \frac{1}{n}\log r_n(f,\vep)-h(f,\vep) \leq \frac{C|\log \vep |}{n}.$$
One easily sees this is the case of the following zero topological
entropy systems : the identity map,
 translation maps, interval and circles homeomorphisms,... Yomdin also proved in \cite{Yom91} that  a  polynomial of degree $k$ on a compact invariant set of
$\mathbb{R}^2$ of maximal entropy $\log k$ also satisfies this
property.

\begin{Que}
What are the dynamical systems satisfying property (P)? Does it
contain a large class of systems?
\end{Que}

We will study the modulus of continuity of the topological entropy
for systems in $C^\mathcal{M}$ $C^0$-close to a system satisfying
the property (P). To simplify we will only consider surface
$V$-ultradifferentiable maps and  the limit case in Theorem
\ref{surfh}, $M_k=M_0k^{k^2}$ for all integers $k$ where $M_0$ is
some fixed real number larger than $e$.

\begin{Cor}\label{upper contin}
Let $(f,M)$ be a continuous  dynamical system satisfying property $(P)$ with $M$ a smooth compact Riemannian surface.
 Then there exists a  constant $C=C(f)$, such that  for all  $0<\vep<\min(1,R_{inj}^2)$ and for all
  $g\in C_V^{(M_0k^{k^2})_k}(M)$ with $d_{C^0}(f,g)\leq \vep$ :
$$h(g)\leq h(f)+C\log M_0\sqrt{\frac{\log|\log\vep|}{|\log\vep|}}.$$
\end{Cor}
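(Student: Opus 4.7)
The plan is to apply Proposition \ref{mod} with $G = C_V^{(M_0k^{k^2})_k}(M)$ and then optimise the resulting expression using property $(P)$ of $f$ and the surface estimate from Corollary \ref{surfh} (i.e.\ the special case of Theorem \ref{gen} for the weight $\mathcal{M}=(M_0k^{k^2})_k$).

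\textbf{Step 1 (uniform tail estimate on $G$).} Every $g\in G$ satisfies $\|Dg\|_\infty\leq M_1 = M_0$, hence $M_0(G)\leq M_0$. Since $\mathcal{M}=(M_0k^{k^2})_k$ is (up to a harmless adjustment of the first terms) $(1,2,D)$-admissible for some $D$, Theorem \ref{gen} combined with Inequality (\ref{surf}) gives, for all $\vep$ small enough and uniformly in $g\in G$,
$$h_{\loc}(g,\vep) \leq v_1^*(g,2\vep)\leq \frac{(2D+1)\log M_0}{G_\mathcal{M}(|\log (2\vep)|/2)}\cdot$$
As in the proof of Corollary \ref{surfh}, $G_\mathcal{M}(x)\sim x/\log x$, so there exists a constant $C_1=C_1(D)$ such that for $\vep$ small,
$$h^G_{\loc}(\vep) \leq C_1 \log M_0\,\frac{\log|\log\vep|}{|\log\vep|}.$$

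\textbf{Step 2 (estimate of $p_\vep$ using property $(P)$).} By definition $p_\vep$ is the least integer with $\frac{1}{p_\vep}\log r_{p_\vep}(f,\vep/4)-h(f,\vep/4)\leq h^G_{\loc}(\vep)$. Property $(P)$ gives constants $C=C(f)>1$ and $\zeta(f)>0$ such that, for $0<\vep<\zeta(f)$,
$$\frac{|\log(\vep/4)|}{Cn}\leq \frac{1}{n}\log r_n(f,\vep/4)-h(f,\vep/4)\leq \frac{C|\log(\vep/4)|}{n}\cdot$$
Combining the upper bound here with the lower bound of Step 1 yields $p_\vep\leq C_2 \frac{|\log\vep|^2}{\log M_0\,\log|\log\vep|}$, and the lower bound gives a matching lower bound for $p_\vep$ up to constants. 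Thus $p_\vep \asymp \frac{|\log\vep|^2}{\log M_0\,\log|\log\vep|}$.

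\textbf{Step 3 (computing $N(\vep)$).} Recall $N(\vep)$ is defined by $\tfrac{N(\vep)}{4}M_0(G)^{-p_{N(\vep)}}=\vep$, equivalently
$$|\log\vep| = p_{N(\vep)}\log M_0(G) + |\log N(\vep)| + \log 4.$$
Substituting the estimate of Step 2 (applied at the scale $N(\vep)$), the first term dominates and we obtain
$$|\log\vep| \asymp \frac{|\log N(\vep)|^2}{\log|\log N(\vep)|}\cdot$$
Inverting this relation (using the obvious monotonicity and the fact that $\log|\log N(\vep)|$ varies much more slowly than $|\log N(\vep)|$), one deduces $\log|\log N(\vep)|\asymp \log|\log\vep|$ and therefore
$$|\log N(\vep)| \asymp \sqrt{|\log\vep|\,\log|\log\vep|}.$$

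\textbf{Step 4 (conclusion).} Inserting the previous asymptotics into the estimate of Step 1 at the scale $N(\vep)$ gives
$$h^G_{\loc}(N(\vep)) \leq C_1\log M_0\,\frac{\log|\log N(\vep)|}{|\log N(\vep)|}\leq C_3\log M_0\sqrt{\frac{\log|\log\vep|}{|\log\vep|}}.$$
Proposition \ref{mod} then yields, for every $g\in G$ with $d_{C^0}(f,g)\leq\vep$,
$$h(g)\leq h(f) + 2h^G_{\loc}(N(\vep))\leq h(f) + C\log M_0\sqrt{\frac{\log|\log\vep|}{|\log\vep|}}$$
for a constant $C=C(f)$, as required.

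The main technical obstacle is the asymptotic inversion in Step 3: one needs to verify that the iterated logarithm $\log|\log N(\vep)|$ and $\log|\log\vep|$ are comparable, so that the rough quadratic balance $|\log\vep|\asymp p_{N(\vep)}\log M_0$ really does produce the $\sqrt{\log|\log\vep|/|\log\vep|}$ factor. A careful book-keeping of constants, together with the monotonicity of all functions involved, handles this.
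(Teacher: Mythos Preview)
Your proof follows essentially the same route as the paper's: apply Proposition \ref{mod} with $G=C_V^{(M_0k^{k^2})_k}(M)$, feed in the uniform bound $h^G_{\loc}(\vep)\lesssim\log M_0\,\frac{\log|\log\vep|}{|\log\vep|}$ from Theorem \ref{gen}/Corollary \ref{surfh}, use property $(P)$ to get $p_\vep\asymp|\log\vep|^2/(\log M_0\,\log|\log\vep|)$, and invert. The paper carries out the inversion in the forward direction (fix the scale $\vep$, compute $\delta_\vep=\tfrac{\vep}{4}M_0^{-p_\vep}$, and rewrite $h^G_{\loc}(\vep)$ in terms of $\delta_\vep$), whereas you invert $N(\vep)$ directly; the asymptotic bookkeeping is identical.

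One point of phrasing deserves correction. In Step 2 you write ``the lower bound of Step 1'', but Step 1 produces only an \emph{upper} bound on $h^G_{\loc}$. What actually makes the two-sided estimate on $p_\vep$ work is that Proposition \ref{mod} goes through verbatim if one \emph{replaces} $h^G_{\loc}(\vep):=\sup_{g\in G}h_{\loc}(g,\vep)$ by any larger function tending to zero; the paper simply \emph{takes} $h^G_{\loc}(\vep)=C_1\log M_0\,\frac{\log|\log\vep|}{|\log\vep|}$ as that function. Once this is a fixed explicit quantity, both bounds from property $(P)$ yield $p_\vep\asymp|\log\vep|/h^G_{\loc}(\vep)$ as you claim. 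Make this substitution explicit and Step 2 is clean. (Alternatively, note that for the final inequality only an \emph{upper} bound on $p_\eta$ is needed, since one may choose any $\eta$ with $\tfrac{\eta}{4}M_0^{-p_\eta}\geq\vep$ rather than the exact $N(\vep)$.)
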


\begin{proof}
 With the notation of Proposition \ref{mod} we have
 $p_\vep\simeq |\log (\vep/4)|/h_{loc}^G(\vep)$. We assume now $G=C_V^\mathcal{M}(M)$
 with $M_k=M_0k^{k^2}$ for all $k\in \mathbb{N}$. Then by Theorem \ref{gen},
  we can take $h^G_{loc}(\vep)= C_1\log M_0\frac{\log|\log \vep|}{|\log
  \vep|}$ for some universal constant $C_1$.
  Thus, with   $\delta_\vep:=\frac{\vep}{4} M_0^{-p_\vep}$, we have clearly $|\log \delta_\vep|\simeq \frac{|\log \vep|^2}{\log|\log\vep|}$  and  $\log |\log \delta_\vep| \simeq \log |\log \vep|$.
  It follows that

\begin{eqnarray*}
h^G_{\loc}(\vep)&\leq &  C\log M_0\frac{\log|\log \vep|}{|\log
\vep|};\\[2mm]
& \leq & C\log M_0 \frac{\log|\log \delta\vep|}{\sqrt{|\log
\delta_\vep|\times\log|\log\delta_\vep|}};\\[2mm]
&\leq & C\log M_0\sqrt{\frac{\log|\log \delta_\vep|}{|\log
\delta_\vep|}},
\end{eqnarray*}
for some  $C=C(f)$.  Therefore for $g\in C^\mathcal{M}(M)$ with
$d(f,g)\leq \delta$ we get by applying Proposition \ref{mod}
$$h(g)\leq h(f)+ 2C\log M_0\sqrt{\frac{\log|\log \delta|}{|\log \delta|}}.$$
\end{proof}

\section{$C^r$ $(r\geq 2)$  robust  examples}\label{robust}

In this section, we construct non $h$-expansive  $C^r$ $(r\geq 2)$
open domains associated with homoclinic tangencies to prove Theorem
\ref{non EE}.

\subsection{Structure of hyperbolic sets}  We first make some definitions.  Fix
$f\in \Diff^r(M)$ with $r \geq 1$.  Let $\Lambda \subset M$ be an
$f$-invariant set.  We call $\Lambda$ a hyperbolic
 set for $f$ if  there exist  $\lambda_0\in (0,1)$, $C>0$,
  and a $Df$-invariant decomposition $T_{\Lambda}M=E^s\oplus E^u$ such that
\begin{eqnarray*}
\|D_xf^nv\|\leq C\lambda_0^n\|v\|,\quad\text{for any}\,\,n\geq
0,\,\,v\in
E^s(x),\,\,x\in \Lambda;\\[2mm]
\|D_xf^{-n}v\|\leq C\lambda_0^n\|v\|,\quad\text{for any}\,\,n\geq
0,\,\,v\in E^u(x),\,\,x\in \Lambda.
\end{eqnarray*}
At most taking a suitable equivalent metric, we can assume $C=1$ in
above definition.  $\Lambda$ is further called a basic set if
\begin{itemize}\item $\Lambda$ is transitive:  there exists $x\in \Lambda$ whose orbit is dense in $\Lambda$;\\
\item   $\Lambda$ is isolated:  there exists a
neighborhood $U$ of $\Lambda$ such that $$\bigcap_{\,n\in
\mathbb{Z}}f^n(U) = \Lambda.$$   Here $U$ is called an adapted
neighborhood of $\Lambda$.  \end{itemize}For a hyperbolic set
$\Lambda$, given a point $x\in \Lambda$, there exist $C^r$
injectively immersed sub-manifolds $W^s(x)$ and $W^u(x)$ given by
$$W^s(x) = \{y \in M : d( f^n(y), f^n(x))\to 0 \,\,\text{as}\,
n\to+\infty\}$$ and
$$W^u(x) = \{y \in M : d(f^{-n}(y), f^{-n}(x))\to
0\,\,\text{as}\,n\to +\infty\},$$ see for example,  Theorem 3.2 in
\cite{HP}.  Here $W^s(x)$, $W^u(x)$ are called the stable manifold
and the unstable manifold at $x$, respectively. Furthermore, the
stable manifold of size $\delta>0$ is defined by
$$W^s_{\delta}(x) = \{y \in M : d( f^n(y), f^n(x))\leq \delta \,\,\text{for all}\,
n\geq 0\}.$$ Similarly,  one can define the unstable manifold of
size $\delta$ as $W^u_{\delta}(x)$ by considering $f^{-1}$.

For $x,y\in \Lambda$, and a point $z\in W^u(x)\cap W^s(y)$, we call
$z$ is a transversal intersection point if
$$T_zW^u(x)\oplus T_z W^s(x)=T_z M.$$
Conversely, a non transversal intersection point is called a
tangency.

A periodic point $p$ of $f$ is a point such that there is a positive
integer $n$ with $f^n(p)=p$, where $n$ is called a period of $p$.
The periodic point $p$ is hyperbolic if all eigenvalues of the
derivative $Df^{n}(p)$ have modulus different from 1. In fact, a
periodic point $p$ is hyperbolic if and only if the orbit of $p$ is
a hyperbolic basic set.   More generally, we say that an
$f$-invariant set $\Lambda$ is periodic if there exist a subset
$\Lambda_1\subset \Lambda$ and a positive integer $n$ such that
\begin{itemize}
\item \quad  $f^{n}(\Lambda_1) =\Lambda_1$,\\
\item \quad  $\Lambda =\bigcup_{0\leq i<n} f^i(\Lambda_1)$. \end{itemize}

\noindent{In this case}, we call $n$ to be a period of $\Lambda$,
and $\Lambda_1$ to be a base of $\Lambda$. Denote the diameter of
$\Lambda$ in the base $\Lambda_1$ by
$$\diam_{\Lambda_1}(\Lambda)=\max_{0\leq i<n}\,\diam (f^i(\Lambda_1)).$$

By the uniform hyperbolicity of $\Lambda$, there exist
$\varepsilon_0,\delta_0>0$, $\lambda\in (0,1)$ such that
\begin{itemize}\item
\begin{eqnarray*}d(f^n(y),f^n(z))&\leq& \lambda^nd(y,z), \quad \text{for all}\, n\geq 0,\,\,y,z\in W^s_{\vep_0}(x),\,x\in
\Lambda;\\[2mm]
d(f^{-n}(y),f^{-n}(z))&\leq& \lambda^nd(y,z), \quad \text{for all}\,
n\geq 0,\,\,y,z\in W^u_{\vep_0}(x),\,,x\in \Lambda;\end{eqnarray*}
\item $W^s_{\varepsilon_0}(x)\cap W^u_{\varepsilon_0}(y)$ contains a
single point $[x,y]$ whenever $d(x,y)<\delta_0$.  Furthermore, the
function
$$[\cdot, \cdot]: \{(x,y)\in M\times M\mid d(x,y)<\delta_0 \}\rightarrow M$$
is continuous. \end{itemize}

 A  rectangle $R$ is understood by a subset of $M$ with diameter
 smaller than $\vep_0$
 such that  $[x,y]\in R$ whenever $x,y\in R$.
 For $x\in
R$ let $$W^s(x,R)=W^s_{\varepsilon_0}(x)\cap R\quad \mbox{and}\quad
W^u(x,R)=W^u_{\varepsilon_0}(x)\cap R.$$ For a hyperbolic basic set
$\Lambda$,  one can obtain the following structure known as a Markov
partition $\mathcal{R}=\{R_1,R_2,\cdots,R_l\}$ of $\Lambda$ with
properties:
\begin{enumerate}
\item[(i)] $\Int R_i\cap \Int R_j=\emptyset$ for $i\neq j$;\\
\item[(ii)] $f W^u(x,R_i)\supset W^u(fx,R_j)$ and \\
$f W^s(x,R_i)\subset W^s(fx,R_j)$ when $x\in \Int R_i$, $fx\in \Int
R_j$,
\end{enumerate}
See  Bowen\cite{Bowen}. Using the Markov Partition $\mathcal{R}$ one
can define the transition matrix $A=A(\mathcal{R})$ by

$$A_{i,j}=\begin{cases}1\quad \mbox{if}\quad  \Int R_i\cap f^{-1} ( \Int R_j)\neq \emptyset;\\ 0\quad \mbox{otherwise}.
\end{cases}$$
The subshift $(\Sigma_A,\sigma)$ associated with $A$ is given by
$$\Sigma_A=\{\underline{q}\in \Sigma_l\mid \,A_{q_i,q_{i+1}}=1\quad \forall i\in \mathbb{Z}\}.$$
For each $\underline{q}\in \Sigma_A$,  the set $\cap_{i\in
\mathbb{Z}}f^{-i}R_{q_i}$ contains of a single point, which we
denote by $\pi_0( \underline{q} )$.  We define
$$\Sigma_A(i)=\{\underline{q}\in \Sigma_A\mid q_0=i\}.$$
The following properties hold  for the map $\pi_0$ (see Theorem 28
of \cite{Bowen}):
\begin{enumerate}
\item[(i)] The map $\pi_0: \Sigma_A\rightarrow \Lambda$ is a continuous surjection
satisfying $\pi_0\circ \sigma=f\circ \pi_0;$\\
\item[(ii)] $\pi_0(\Sigma_A(i))=R_i\cap \Lambda$,\quad $1\leq i\leq l$.
\end{enumerate}
Since $\Lambda$ is a hyperbolic basic set, by Smale's Spectral
Decomposition Theorem \cite{Smale}, there exists $n_0\in \mathbb{N}$
such that
\begin{eqnarray*}&&\Lambda=\Lambda_1\cup\cdots\cup\Lambda_{n_0},\quad
\Lambda_i\cap\Lambda_j=\emptyset,\,\,1\leq i< j\leq n_0,\\[2mm]
&&f^i(\Lambda_1)=\Lambda_{1+i}, \,\,1\leq i\leq n_0-1,\quad
f^{n_0}(\Lambda_1)=\Lambda_1.\end{eqnarray*}Moreover, $f^{n_0}$ is
mixing in $\Lambda_1$, i.e., given pairs of open sets $U_1, U_2$
with nonempty intersections with $\Lambda_1$, $\exists\, n_1\in
\mathbb{N}$, s.t. $f^{n_0n_1}(U_1)\cap U_2\neq \emptyset$,
$\forall\,n\geq n_1$. Equivalently to say here, for the transition
matrix $B$ of a Markov partition $\mathcal{R}$  for
$f^{n_0}\mid_{\Lambda_1}$, one can find $n_1\in \mathbb{N}$ such
that all elements of the matrix $B^{n_1}$ are positive.

\begin{Lem}\label{bound period}  There exists $\vep_1>0$ such that for any  $\vep\in(0,\vep_1)$ and $x_1,x_2\in
\Lambda$, one can find a  periodic point $p\in \Lambda$ with a
period $\tau(p)\in [2|\log\vep|/|\log \lambda|,9|\log\vep|/|\log
\lambda|]$ such that
$$d(p,x_1)\leq \vep,\quad d(f^i(p),x_2)\leq \vep\quad \text{for some }\,\,i\in [0,\tau(p)].$$
\end{Lem}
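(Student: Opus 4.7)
\medskip

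\noindent\textbf{Proof plan for Lemma \ref{bound period}.}
The plan is to build the periodic point $p$ by concatenating finite admissible words in the symbolic model of $\Lambda$ given by a Markov partition, so that two large blocks of the resulting periodic code coincide with the codes of $x_1$ and $x_2$ respectively, forcing $p$ and $f^{i}(p)$ to shadow $x_1$ and $x_2$ within distance $\vep$.

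\emph{Setup.} Fix a Markov partition $\mathcal{R}=\{R_1,\dots,R_l\}$ of $\Lambda$ with all $\diam(R_i)<\vep_0$, together with the coding $\pi_0:\Sigma_A\to\Lambda$. From the product structure and uniform contraction of the local stable and unstable sets inside rectangles, one obtains a constant $C_0>0$ such that whenever $\underline{s},\underline{t}\in\Sigma_A$ satisfy $s_i=t_i$ for all $|i|\le N$, one has $d(\pi_0(\underline{s}),\pi_0(\underline{t}))\le C_0\lambda^{N}$. Using Smale's spectral decomposition and the fact that $f^{n_0}$ is mixing on each $\Lambda_k$ (so that some power $B^{n_1}$ of the relevant transition matrix has strictly positive entries), one obtains a constant $K=K(f,\mathcal{R})\in\NN$ such that for every ordered pair of symbols $(a,b)\in\{1,\dots,l\}^2$ there is an admissible word $a\,c_1\cdots c_{k-1}\,b$ in $\Sigma_A$ with $k\le K$.

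\emph{Construction.} Given $\vep\in(0,\vep_1)$, choose the integer
\[ N=N(\vep):=\Big\lceil \tfrac{|\log(\vep/C_0)|}{|\log\lambda|}\Big\rceil, \]
so that $C_0\lambda^{N}\le\vep$. Pick any $\underline{q},\underline{q'}\in\Sigma_A$ with $\pi_0(\underline{q})=x_1$ and $\pi_0(\underline{q'})=x_2$, and let $W_1$ (resp.\ $W_2$) be an admissible bridging word joining $q_N$ to $q'_{-N}$ (resp.\ $q'_N$ to $q_{-N}$) of length $\le K$. Define the infinite sequence
\[ \underline{r}=\overline{\big(q_{-N}\cdots q_N\big)\,W_1\,\big(q'_{-N}\cdots q'_N\big)\,W_2}\in\Sigma_A, \]
periodic of some period $\tau\le (2N{+}1)+|W_1|+(2N{+}1)+|W_2|\le 4N+2+2K$. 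Put $p:=\pi_0(\underline{r})$ and $i:=(2N{+}1)+|W_1|$.

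\emph{Shadowing and period bounds.} Since $\underline{r}$ agrees with $\underline{q}$ on positions $-N,\dots,N$, the contraction estimate yields $d(p,x_1)\le C_0\lambda^{N}\le\vep$; applying the same reasoning to $\sigma^{i}(\underline{r})$ versus $\underline{q'}$ gives $d(f^{i}(p),x_2)\le\vep$. For the required window on $\tau$, observe that $4N\le \tau \le 4N+2+2K$, and $N=|\log\vep|/|\log\lambda|+O(1)$, so
\[ \tau = \frac{4|\log\vep|}{|\log\lambda|}+O(1). \]
Choosing $\vep_1$ small enough that the $O(1)$ correction is dominated by $|\log\vep|/|\log\lambda|$, the period lies in $[\,2|\log\vep|/|\log\lambda|,\,9|\log\vep|/|\log\lambda|\,]$ for all $\vep\in(0,\vep_1)$, as desired.

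\emph{Main obstacle.} The only delicate point is the uniformity of the bridging length $K$: one must use the cyclic decomposition $\Lambda=\Lambda_1\cup\cdots\cup\Lambda_{n_0}$ together with the mixing property of $f^{n_0}$ on each $\Lambda_k$ to ensure that any two symbols can be joined by an admissible word of length bounded independently of $x_1,x_2$ and $\vep$. Once $K$ is fixed, everything else is a routine exponential shadowing estimate in the Markov partition, and the period bounds follow from elementary arithmetic on $N$.
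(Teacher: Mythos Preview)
Your proposal is correct and follows essentially the same approach as the paper: construct a periodic symbolic word by concatenating the length-$(2N{+}1)$ central blocks of the codes of $x_1$ and $x_2$ with short admissible bridges, then read off the shadowing estimate from the uniform hyperbolic contraction and the period bound from elementary arithmetic on $N$. The only cosmetic difference is that the paper first passes to a mixing piece $\Lambda_1$ for $g=f^{n_0}$ (pulling $x_1,x_2$ back to $y_1,y_2\in\Lambda_1$ by at most $n_0$ iterates and using that $B^{n_1}>0$ to produce bridges of fixed length $n_1$), whereas you work directly with the Markov partition for $f$ on $\Lambda$ and invoke transitivity of $\Sigma_A$ together with finiteness of the alphabet to get the uniform bridging bound $K$; both routes give the same construction and the same estimates.
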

\begin{proof}For $x_1,x_2\in \Lambda$, we can choose $m_1,m_2\in [0,n_0-1]$ such
that $$y_1:=f^{-m_1}(x_1)\in \Lambda_1, \quad y_2:=f^{-m_2}(x_2)\in
\Lambda_1.$$ Let $g=f^{n_0}$. Take $\underline{q}, \underline{q}'\in
\Sigma_B$ with $y_1=\cap_{i\in \mathbb{Z}}g^i(R_{q_i})$,
$y_2=\cap_{i\in \mathbb{Z}}g^i(R_{q'_i})$. Since
 all elements of the matrix
$B^{n_1}$ are positive,  for any $n\geq n_1$ there exists a sequence
$i_1,i_2,\cdots, i_{n_1-1}$, $i_1',i_2',\cdots, i_{n_1-1}'$ such
that
$$B_{q_n,i_1}B_{i_1,i_2}\cdots
B_{i_{n_1-2},i_{n_1-1}}B_{i_{n_1-1},q'_{-n}}>0,\,
B_{q'_n,i_1'}B_{i'_1,i'_2}\cdots
B_{i'_{n_1-2},i'_{n_1-1}}B_{i'_{n_1-1}q_{-n}}>0$$ which imply the
following periodic point is contained in $\Sigma_B$:
$$w:=[q_{-n},\cdots, q_{-1}, \stackrel{0}{q_0};
q_1,\cdots, q_n, i_1,\cdots, i_{n_1-1}, q'_{-n},\cdots,
q'_0,q_1',\cdots,q'_n,i_1',\cdots,i_{n_1-1}'].$$ Let
$p=\cap_{i\in\mathbb{Z}}g^{i}(R_{w_i})$, which is a periodic point
of $g$ with a period $4n+2n_1$.  Then for each $i\in[-n, n] $,
$g^{i}(p),g^{i}(y_1)$ belong to the same rectangle in the Markov
partition $\mathcal{R}$. Also, $g^{i}(g^{2n+n_1}(p)),g^{i}(y_2)$
belong to the same rectangle of  $\mathcal{R}$. They imply
$$d(\pi^{s/u}_{g^{i}(y_1)}(g^{i}(p)),g^{i}(y_1))\leq
\vep_0,\quad
d(\pi^{s/u}_{g^{i}(y_2)}(g^{i}(g^{2n+n_1}(p))),g^{i}(y_2))\leq
\vep_0,
$$ for $ i\in [-n, n]$,  where $\pi^{s/u}_x(z)$ denotes the
intersection point of $W^{u/s}_{\vep_0}(z)$ and
$W^{s/u}_{\vep_0}(x)$. By the uniform hyperbolicity of $\Lambda$,
$$d(\pi^{s/u}_{y_1}(p),y_1)\leq \vep_0\lambda^{nn_0},\quad d(\pi^{s/u}_{y_2}(g^{2n+n_1}(p)),y_2)\leq \vep_0\lambda^{nn_0}.$$
Note that there exists $C_0>0$ such that $d(x,z)\leq
C_0\max(d(\pi^{s}_{x}(z),x), d(\pi^{u}_{x}(z),x))$ for any $z$ with
$d(z,x)\leq \delta_0$, $x\in \Lambda$.  We deduce
$$d(p,y_1)\leq C_0\vep_0\lambda^{nn_0},\quad d(g^{2n+n_1}(p),y_2)\leq C_0\vep_0\lambda^{nn_0}.$$
 Choose $\vep_1>0$ such
that $\max\{|\log(C_0\vep_0)|+n_0\log
\|Df\|,\,\,n_0|\log\lambda|\}\leq \frac14|\log\vep_1|$. For any
$\vep\in (0,\vep_1)$, let
$$n=\Big{\lfloor}\frac{|\log\vep-\log(C_0\vep_0)-n_0\log
\|Df\||}{n_0|\log\lambda|}\Big{\rfloor}+1\in
\Big{[}\frac{|\log\vep|}{2n_0|\log\lambda|},\frac{3|\log\vep|}{2n_0|\log\lambda|}\Big{]}.$$
Observe that $w$ has a period  $4n+2n_1\in [4n, 6n]$. Then $p$ as a
periodic point of $f$ has a period
$$\tau(p)\in [4n_0n, 6n_0n]\subset \Big{[}\frac{2|\log\vep|}{|\log
\lambda|},\frac{9|\log\vep|}{|\log\lambda|}\Big{]}$$  and
$$\max\Big{\{}d(p,y_1),\,\, d(f^{n_0(2n+n_1)}(p),y_2)\Big{\}}\leq C_0\vep_0\lambda^{nn_0}\leq \vep\|Df\|^{-n_0}.$$
Hence, \begin{eqnarray*}d(f^{m_1}(p), x_1)&=& d(f^{m_1}(p),
f^{m_1}(y_1)) \leq \|Df\|^{m_1}d(p, y_1)  \leq
\vep\\[2mm]d(f^{m_2+n_0(2n+n_1)}(p),x_2)&=&
d(f^{m_2+n_0(2n+n_1)}(p),f^{m_2}(y_2))\\[2mm]
 &\leq& \|Df\|^{m_2}d(f^{n_0(2n+n_1)}(p),y_2)\leq \vep.\end{eqnarray*} Moreover,
$0\leq m_2+n_0(2n+n_1)-m_1\leq 4n_0n\leq \tau(p)$. The proof of
Lemma \ref{bound period}   is completed.
\end{proof}
 The following  Proposition states
that the uniformly hyperbolic structure holds in a persistent way.
\begin{Prop}\label{prop2} Let $\Lambda=\Lambda(f)$ be a hyperbolic basic set
for the $C^1$ diffeomorphism $f$ on $M$ with adapted neighborhood
$U$. Given $C> 0$, there is a neighborhood $\mathcal{N}_C$ of $f$ in
$\Diff^1(M)$ such that if $g\in \mathcal{N}_C$, then $\Lambda(g)
=\cap_{n\in \mathbb{Z}}\, g^n(U)$ is a hyperbolic basic set for $g$
and there is a unique continuous embedding $h_g :\Lambda(f) \to M$
such that $h_g(\Lambda(f))= \Lambda(g)$, $g\circ h_g = h_g\circ f$
and $d(h_g,\id)<C$. Moreover, $h_f = \id$.
\end{Prop}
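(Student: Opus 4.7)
The plan is to establish this as the standard structural stability theorem for hyperbolic basic sets, which I would carry out in four main steps.

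First, I would establish persistent hyperbolicity of $\Lambda(g):=\bigcap_{n\in\ZZ}g^n(U)$ via cone fields. Choose a continuous extension of the splitting $E^s\oplus E^u$ from $\Lambda$ to the adapted neighborhood $U$, and define stable and unstable cone fields $C^s_\alpha,C^u_\alpha$ of some small opening $\alpha>0$. By hyperbolicity on $\Lambda$ and continuity of $Df$, these cone fields are strictly forward/backward invariant under $Df$ on some smaller neighborhood $U'\subset U$, with uniform contraction/expansion of vectors inside them. Shrinking the $C^1$ neighborhood $\mathcal{N}_C$ of $f$, these properties pass to any $g\in\mathcal{N}_C$ on $U'$. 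Then any point in $\Lambda(g)$ has a well-defined hyperbolic splitting obtained as the intersection of iterated cones, giving uniform hyperbolicity of $\Lambda(g)$.

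Second, I would construct the conjugacy $h_g$ by a shadowing argument. For any $x\in\Lambda$ the sequence $(f^n x)_{n\in\ZZ}$ is an $\eta$-pseudo-orbit for $g$ with $\eta\leq d_{C^0}(f,g)$, which can be made arbitrarily small by shrinking $\mathcal{N}_C$. The shadowing lemma on the hyperbolic set $\Lambda(g)$ (applied via the stable/unstable manifold theorem for sequences, or via the contraction-mapping setup on sequences of graphs) produces, for each $x$, a unique point $h_g(x)\in U$ whose $g$-orbit stays $C$-close to the $f$-orbit of $x$. Uniqueness gives $g\circ h_g=h_g\circ f$ and continuity of $h_g$. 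The uniform closeness forces $h_g(\Lambda)\subset\Lambda(g)$, and injectivity follows from the expansive character of hyperbolic sets (two shadowing orbits close in all iterates must coincide). Applying the same construction to $f$ viewed as a perturbation of $g$ yields an inverse $h_g^{-1}$, so $h_g$ is a homeomorphism onto $\Lambda(g)$. Setting $g=f$ gives $h_f=\id$ by uniqueness.

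Third, I would check that $\Lambda(g)$ inherits the basic set properties. Isolation is immediate from the definition $\Lambda(g)=\bigcap_n g^n(U)$ together with the fact that $U$ remains adapted for $g$ (a consequence of the cone-field argument in Step 1, perhaps after slightly shrinking $U$ to $U'$). Transitivity transfers through the conjugacy: if $x\in\Lambda$ has dense $f$-orbit in $\Lambda$, then $h_g(x)\in\Lambda(g)$ has dense $g$-orbit by continuity of $h_g$ and $h_g^{-1}$. Hence $\Lambda(g)$ is a hyperbolic basic set.

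The main obstacle is the clean execution of the shadowing/graph-transform step producing $h_g$; the cone-field persistence is essentially a continuity argument, and transitivity transfers formally, but the conjugacy requires carefully setting up a contraction on a space of sequences in $U$ (or equivalently solving a fixed-point equation using local stable and unstable manifolds) to ensure uniqueness, continuity, and the closeness bound $d(h_g,\id)<C$ all simultaneously, which is precisely where one must match the size of $\mathcal{N}_C$ to the prescribed $C$.
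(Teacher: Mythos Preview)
Your outline is a correct and standard route to the structural stability of hyperbolic basic sets (cone-field persistence plus shadowing to build the conjugacy, then transfer of transitivity and isolation). However, the paper does not prove this proposition at all: it is stated without proof as a classical fact, immediately followed by the next subsection on thickness of Cantor sets. So there is nothing to compare against; the paper simply quotes the result, while you have sketched one of the usual proofs.
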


\subsection{Thickness of Cantor sets} Let $K$ be a cantor set, i.e., a compact perfect totally disconnected subset  of
$\mathbb{R}$.  Let $K_0$ be the smallest closed interval containing
$K$. Then $K_0-K=\cup_{i=0}^{\infty}U_i$, where $U_i\cap
U_j=\emptyset$ if $i\neq j$ and each $U_i$ is a bounded open
interval. Let $U_{-2}$, $U_{-1}$ be the unbounded components of
$\mathbb{R}\setminus K$. All $U_i$, $i\geq -2$,  are called the gaps
of $K$. For any $i\geq 1$, define  $K_i=K_0\setminus\left(\cup_{0\leq j\leq
i-1}U_j\right)$. Then
$$K_0\supseteq K_1\supseteq \cdots \supseteq \cdots.$$
Each $K_i$ is a union of closed intervals and $K=\cap_{i\geq 0}
K_i$. We call $\{K_i\}_{i\geq 0}$ to be a defining sequence for $K$.
For $i\geq 1$, let $K_i^*$ be the  connected component of $K_i$ containing
$U_i$, then $K^*_i\setminus U_i$ is the union of two closed intervals $I_i^{l}$,
$I_i^{r}$.
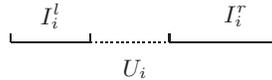
\begin{figure}[h]
\begin{center}
\begin{picture}(200,50)(-100,-12)

\put(-50,0){\line(1,0){30}}

\multiput(-20,0)(2,0){15}{\line(1,0){0.5}}

\put(10,0){\line(1,0){40}}

\put(-20,0){\line(0,1){3}}

\put(-50,0){\line(0,1){3}}

\put(10,0){\line(0,1){3}}

\put(50,0){\line(0,1){3}}

\put(-3,-10){\makebox(0,0){\small $U_i$}}

\put(-35,10){\makebox(0,0){\small $I_i^l$}}

\put(35,10){\makebox(0,0){\small $I_i^r$}}

$$\, $$

\end{picture}
$$\, $$
\end{center}
\caption{Remove open intervals }
\end{figure}

For an interval $I$, denote by $|I|$ the length of $I$. Set
$$\tau(\{K_i\})=\inf_{i\geq 0} \Big{\{} \min
(\frac{|I_i^l|}{|U_i|},\frac{|I_i^r|}{|U_i|})\Big{\}}.$$ The
thickness of $K$ is defined by
$$\tau(K)=\sup\{\tau(\{K_i\}):\, \{K_i\} \,\, \text{is a defining sequence for}\,\,  K\}.$$

\begin{Lem}[Gap lemma, Lemma 4 of \cite{New79}]\label{gap lemma} Let $K, F$ be two cantor sets  with
thicknesses $\tau_1, \tau_2$. If $\tau_1\cdot \tau_2>1$, then one of
the following alternatives occurs:
\begin{itemize}\item $K$ is contained  in a gap closure of $F$;\\
\item $F$ is contained  in a gap closure of $K$;\\
\item $K\cap F\neq \emptyset$. In this case, for any defining sequences
$\{K_i\}$ of $K$, $\{F_i\}$ of $F$ with
$\tau(\{K_i\})\cdot\tau(\{F_i\})>1$, it holds that $\Int(K_i\cap
F_i)\neq \emptyset$ for any $i$.
\end{itemize}

\end{Lem}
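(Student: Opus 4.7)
The plan is to prove the trichotomy by contradiction via the classical \emph{linking} argument. Say two open intervals $U,V$ are linked if $U\cap V \neq \emptyset$ but neither contains the other; equivalently, exactly one endpoint of each lies strictly inside the other. First, I would reduce to showing: if neither alternative (1) nor (2) holds, then $K\cap F\neq\emptyset$ and (under any defining sequences $\{K_i\},\{F_j\}$ with $\tau(\{K_i\})\tau(\{F_j\})>1$) $\mathrm{Int}(K_i\cap F_i)\neq\emptyset$ for every $i$. The entire argument hinges on a key geometric lemma stated next.

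The key lemma asserts: under $\tau_1\tau_2>1$, no gap $U$ of $\{K_i\}$ is linked with any gap $V$ of $\{F_j\}$. To prove it, assume $U,V$ linked with, WLOG, $v_l<u_l<v_r<u_r$. Then $u_l\in K$ lies in the gap $V$ of $F$, and $v_r\in F$ lies in the gap $U$ of $K$. Consider the left bridge $I_U^l$ of $U$ (closed interval ending at $u_l$, of length $\geq\tau_1|U|$) and the right bridge $J_V^r$ of $V$ (closed interval starting at $v_r$, of length $\geq\tau_2|V|$). If $I_U^l\subset\overline{V}$ and $J_V^r\subset\overline{U}$ simultaneously, then $\tau_1|U|\leq|I_U^l|\leq|V|$ and $\tau_2|V|\leq|J_V^r|\leq|U|$; multiplying gives $\tau_1\tau_2\leq 1$, contradicting the hypothesis. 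Otherwise one of the bridges, say $I_U^l$, overshoots the corresponding endpoint of the other gap; since the endpoints of $I_U^l$ lie in $K$, the overshoot forces $I_U^l$ to strictly contain some component of $F_j$ adjacent to $v_l$, which then produces a fresh linking between $U$ and a strictly smaller gap of $F$ inside $I_U^l$. Iterating this step yields gaps of arbitrarily small diameter linked with $U$, but the uniform thickness bound $\tau_2$ still applies to them, so the first subcase is eventually reached, producing the contradiction $\tau_1\tau_2\leq 1$.

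Given the no-linking lemma, the trichotomy follows. If $K_0\cap F_0=\emptyset$, then $K_0$ lies in an unbounded gap of $F$ (or vice versa), giving alternative (1) or (2). If $K_0\subset\overline{V}$ for some bounded gap $V$ of $F$ we are in alternative (1), and symmetrically for alternative (2). Otherwise one shows $K\cap F\neq\emptyset$: every gap of $F$ that meets $K_0$ must, by no-linking, be contained in the closure of some bridge of $K$, and vice versa. One then recursively extracts a nested sequence of closed bridges $B_0\supset B_1\supset\cdots$ alternating between bridges of $K$ and $F$, each containing an endpoint of the previous one; their diameters tend to $0$ by the superexponential decay of bridge sizes (from $|bridge|\leq|parent|/(1+\tau)$), and since endpoints of bridges lie in the relevant Cantor set, the unique intersection point belongs to both $K$ and $F$. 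For the final ``moreover'' in alternative~(3), a parallel induction on $i$ shows that the gap $U_i$ removed at level $i$ (resp.\ $V_i$) is strictly contained in one component of $F_i$ (resp.\ of $K_i$) by no-linking, so the nonempty interior of $K_i\cap F_i$ survives each removal.

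The main obstacle is the iteration step in the no-linking lemma: when a bridge overshoots a gap of the other Cantor set, one generates a new linked pair at a finer level, and must ensure this recursion terminates in the contradiction $\tau_1\tau_2\leq 1$ rather than escaping to arbitrarily small scales. The resolution is that the \emph{infimum} definition of thickness makes the bound $\tau_1|U|\leq|V|$ (or the corresponding dual inequality) uniform across all levels, so the recursion must close on itself after finitely many steps. Carrying out this termination carefully is where the detailed case analysis lives.
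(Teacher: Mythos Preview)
The paper does not prove this lemma; it is cited from Newhouse \cite{New79}, so there is no in-paper proof to compare against. Evaluating your sketch on its own merits, there is a genuine gap.

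Your ``key lemma'' --- that under $\tau_1\tau_2>1$ no bounded gap of $K$ can be linked with any bounded gap of $F$ --- is false as a standalone statement. Take $K$ and $F$ to be middle-fifth Cantor sets (each of thickness $2$, so $\tau_1\tau_2=4$) with convex hulls $[0,5]$ and $[\tfrac12,\tfrac{11}{2}]$; the first-level gaps $(2,3)$ of $K$ and $(\tfrac52,\tfrac72)$ of $F$ are linked. Linked gaps are perfectly compatible with $K\cap F\neq\emptyset$, so there is no contradiction to extract from linking alone.

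The classical argument runs your bridge comparison \emph{inside} the contradiction hypothesis $K\cap F=\emptyset$. One first locates a linked pair $(U,V)$ of bounded gaps (this uses that neither set lies in a gap of the other). At the iteration step your bookkeeping is also off: when $I_U^l$ overshoots $v_l$, the point $v_l\in F$ lies in the interior of the bridge $I_U^l$; since $K\cap F=\emptyset$, it lies in some gap $U'$ of $K$ (not of $F$) with $U'\subset I_U^l$, and it is $(U',V)$ that forms the new linked pair. Your claim that $I_U^l$ must strictly contain the adjacent $F$-bridge and thereby produce a smaller $F$-gap linked with the \emph{same} $U$ is not justified and is generally false. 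Finally, the recursion need not ``close on itself after finitely many steps'': the gap sizes decrease geometrically, and if the $\tau_1\tau_2\le 1$ inequality is never triggered, the nested bridges converge to a single point which, as a limit of bridge endpoints from each side, lies in $K\cap F$ --- and \emph{that} is the contradiction. The ``moreover'' clause about $\Int(K_i\cap F_i)\neq\emptyset$ then requires its own argument, since it can no longer be read off from a (false) no-linking statement.
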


Let $\Lambda$ be a hyperbolic basic set of $f\in \Diff^2(M)$ and $p$
be a periodic point of $f$. We can parameterize  $W^s(p)$ and
$W^u(p)$ such that $f\mid_{ W^s(p)}$ and $f\mid_{ W^u(p)}$
 are linear, see \cite{Sternberg}. We define the unstable thickness of
 $(\Lambda,p)$ as $\tau^u(\Lambda,p)=\tau(W^s(p)\cap \Lambda)$, the  stable thickness of
 $(\Lambda,p)$ as $\tau^s(\Lambda,p)=\tau(W^u(p)\cap
 \Lambda)$.  Observe that $W^s(p)\cap \Lambda$ is $f$-invariant, and
 $f\mid_{ W^s(p)}$ is linear, there exist arbitrarily   small compact
 neighborhoods $K$ of $p$ in $W^s(p)\cap \Lambda$ such that $\tau(K)=\tau(W^s(p)\cap
 \Lambda)=\tau^u(\Lambda,p)$.  The same argument applies to
 $\tau^s(\Lambda,p)$. It can be shown that $\tau^{s/u}(\Lambda,p)$ is
 independent of $p$ (Proposition 5 in \cite{New79}). We denote
 $\tau^{s/u}(\Lambda)=\tau^{s/u}(\Lambda,p)$.
By Proposition \ref{prop2}, the persistence  of $\Lambda$ holds in a
$C^1$ neighborhood $\mathcal{N}_1$ of $f$. Furthermore,

\begin{Prop}[Proposition 6 in \cite{New79} or Theorem 2 of Chapter 4.3 in \cite{PT93}]\label{prop3}
There exists a $C^2$ neighborhood $\mathcal{N}_2\subset
\mathcal{N}_1$ of $f$ such that the thicknesses
$\tau^{s/u}(\Lambda(g))$ depend continuously for $g\in
\mathcal{N}_2$.
\end{Prop}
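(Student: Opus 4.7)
The plan is to realize each unstable slice $K^u(g):=W^u(p(g),g)\cap \Lambda(g)$ (localized near the continuation $p(g)=h_g(p)$ of a periodic point $p\in\Lambda$) as the attractor of a finite $C^2$ iterated function system (IFS) whose defining contractions depend $C^2$-continuously on $g$, and then to invoke continuity of thickness for $C^2$ IFSs. The stable thickness $\tau^s$ is handled symmetrically by considering $g^{-1}$, so it suffices to treat $\tau^u$.

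First I would set up the IFS. By Proposition \ref{prop2}, for $g$ in a $C^1$-neighborhood $\mathcal{N}_1$ of $f$ the map $\Lambda(g)=h_g(\Lambda)$ is hyperbolic and conjugate to $\Lambda$ via $h_g$; moreover the local unstable manifold $W^u_{\mathrm{loc}}(p(g),g)$ is a $C^2$ curve depending $C^2$-continuously on $g\in \mathcal{N}_1\cap \Diff^2(M)$ (classical stable manifold theorem). Fix a Markov partition $\mathcal{R}=\{R_1,\dots,R_l\}$ for $f$ with $p\in R_{i_0}$; by structural stability this induces a Markov partition $\mathcal{R}(g)$ with the same combinatorics, also depending $C^2$-continuously on $g$. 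The admissible finite words $(i_0,j_1,\dots,j_n,i_0)$ in the transition matrix that start and end at $i_0$ give first-return times of $g$ to $R_{i_0}(g)$ along $W^u_{\mathrm{loc}}(p(g),g)$; their inverse branches pulled back through $W^u_{\mathrm{loc}}(p(g),g)$ form, after a $C^2$ parametrization of that arc by a fixed interval $[0,1]$, a finite collection of $C^2$ contractions $\Phi(g)=\{\phi_j^g\}_{j\in \mathcal{A}}$ on $[0,1]$ with pairwise disjoint images. The attractor of $\Phi(g)$ equals (a neighborhood of $p(g)$ in) $K^u(g)$, and each $\phi_j^g$ depends $C^2$-continuously on $g$.

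Next I would prove continuity of the thickness of the attractor of a $C^2$ IFS with strong separation. The key tool is bounded distortion: for any finite word $\underline{w}=(w_1,\dots,w_n)$ with $\phi^g_{\underline{w}}:=\phi^g_{w_1}\circ\cdots\circ \phi^g_{w_n}$, the ratio $\sup|D\phi^g_{\underline{w}}|/\inf|D\phi^g_{\underline{w}}|$ is bounded by a constant $K$ independent of $n$ and locally independent of $g$, thanks to the $C^2$ hypothesis and the uniform hyperbolicity. Consequently every bridge/gap ratio at the $n$-th natural defining sequence $K_n(g):=\bigcup_{|\underline{w}|=n}\phi^g_{\underline{w}}([0,1])$ is, up to a factor in $[K^{-1},K]$, equal to a product of linear factors $|\phi^g_{w_i}'|$ evaluated at points which themselves vary $C^1$-continuously with $g$. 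This implies that the thickness $\tau_n(g):=\tau(\{K_n(g)\})$ varies continuously with $g$, and that the supremum $\tau(K^u(g))=\sup_n \tau_n(g)$ is in fact a limit $\lim_n \tau_n(g)$ (since refining the defining sequence can only change bridge/gap ratios by a factor close to $1$, via distortion). Continuity of each $\tau_n$ together with distortion-controlled uniform convergence in $n$ yields the continuity of $g\mapsto \tau^u(\Lambda(g))$ on a $C^2$ neighborhood $\mathcal{N}_2\subset \mathcal{N}_1$.

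The main obstacle is Step 2: the thickness is a supremum over \emph{all} defining sequences, a quantity which is only lower semicontinuous under perturbation in general. Promoting this to continuity requires showing that the supremum is already essentially realized by the dynamically natural sequences $\{K_n(g)\}$, and that these sequences converge in the appropriate sense; both rely crucially on Koebe-type bounded distortion estimates, which are valid in the $C^2$ category but fail in $C^1$. This is precisely why the conclusion is only asserted on a $C^2$ neighborhood of $f$ rather than a $C^1$ one.
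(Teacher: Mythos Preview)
The paper does not prove this proposition; it is quoted from \cite{New79} and \cite{PT93} and used as a black box. Your sketch is in the spirit of the Palis--Takens argument (Chapter~4.3 of \cite{PT93}): model the slice $W^{s/u}(p(g))\cap\Lambda(g)$ as a dynamically defined Cantor set generated by a $C^2$ expanding map varying $C^2$-continuously with $g$, and exploit bounded distortion to control all bridge/gap ratios uniformly in the level. That is indeed the mechanism behind the result, and you correctly identify why $C^2$ (and not merely $C^1$) is needed.

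Two points deserve tightening. First, your IFS is built from \emph{first-return} words $(i_0,j_1,\dots,j_n,i_0)$, but for a general transitive subshift there are infinitely many such words (already for the full $2$-shift the first-return map to a cylinder has infinitely many branches), so the ``finite collection of $C^2$ contractions'' is not guaranteed by your construction. The standard remedy, as in \cite{PT93}, is to work instead with the one-step Markov structure: the slice is the limit set of a $C^2$ expanding Markov map on a finite union of arcs (one arc per rectangle, with transitions dictated by the matrix $A$), equivalently a finite graph-directed IFS; bounded distortion then applies exactly as you describe. Second, the thickness is a supremum over \emph{all} defining sequences, so continuity of the thicknesses $\tau_n(g)$ along the natural dynamical filtration does not by itself give continuity of $\tau$; you need the additional fact (again a consequence of bounded distortion) that any defining sequence yields bridge/gap ratios within a fixed multiplicative constant of those produced by the dynamical one, so that the supremum is comparable to, and hence continuous with, the dynamical value. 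Finally, note a labeling slip: with the paper's convention $\tau^u(\Lambda,p)=\tau(W^s(p)\cap\Lambda)$, your $K^u(g)=W^u(p(g))\cap\Lambda(g)$ actually computes $\tau^s$, not $\tau^u$; this is harmless since you treat both cases symmetrically.
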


\subsection{Small Horseshoes}

Let $\Lambda_0$ be a hyperbolic basic set of $f\in
\Diff^r(M)$ whose stable manifolds and unstable manifolds tangent at
some point. Then by  Lemma 7 and Lemma 8 of \cite{New79} we can at
most by a $C^r$ perturbation  let $f$ have  a hyperbolic basic set
$\Lambda$ satisfying $\tau^{s}(\Lambda)\cdot\tau^u(\Lambda)>1$ and
containing a periodic point $p\in \Lambda$ with a tangency $x_0$ of
$W^u_f(p)$ and $W^s_f(p)$.   By Proposition \ref{prop3}, there
exists a $C^r$ neighborhood $\mathcal{N}_2 $ of $f$ such that
$\tau^{s}(\Lambda(g))\cdot\tau^u(\Lambda(g))>1$  for $g\in
\mathcal{N}_2$.

 For each $g\in \mathcal{N}_2$, take a $C^1$ stable
foliation $\mathcal{F}^s_g(U_1)$  in a neighborhood $U_1$
 of $\Lambda(g)$ such that for $x\in \Lambda(g)$, the leave
$\mathcal{F}^s_g(x)$ is a subset of $W^s_g(x)$.
$\mathcal{F}^s_g(U_1)$ varies continuously with respect to $g\in
\mathcal{N}_2$. Similarly, we have a $C^1$  unstable foliation
$\mathcal{F}^u_g(U_1)$. See the constructions of stable and unstable
foliations in Section 3, Chapter 2 of \cite{PT93}.

For the tangency point $x_0\in W^u_f(p)\cap W^s_f(p)$,
$T_{x_0}W^u_f(p)=T_{x_0}W^s_f(p).$  We let the tangency at $x_0$ is
quadratic (like $y=ax^2$ near the tangency point). Otherwise we can
obtain this with an arbitrarily $C^r$ small perturbation. Denote
$$L=\max\big{\{}d_s(p,x_0), d_u(p,x_0)\big{\}}$$ where $d_{s/u}$ are the distances in the leaves of $\mathcal{F}^{s/u}$.
 So, for $g$ $C^r$ close to $f$,  we can take a $C^1$ line
$l(g)$ near $x_0$ consisting of tangencies of $\mathcal{F}^s_{g}(U)$
and $\mathcal{F}^u_{g}(U)$ with the transversal property:
$$T_{x}l(g)\oplus T_{x}\mathcal{F}^u_g(x)=T_{x}M,\quad \forall\,x\in l(g).$$
Now for small $\delta_2>\delta_1>0$, and $g$ $C^r$ close to $f$,
define projections
\begin{eqnarray*}\pi_1(g): W^s_{\delta_1}(p(g))&\rightarrow& l(g),\\[2mm]
\pi_2(g): W^u_{\delta_1}(p(g))&\rightarrow& l(g)
\end{eqnarray*}
which project along leaves of $\mathcal{F}^u_g(y,L+\delta_2)$ and
$\mathcal{F}^s_g(y,L+\delta_2)$, $y\in W^{s/u}_{\delta_1}(p(g))$,
where $\mathcal{F}^{u/s}_g(y,a)$ denote the $a$-disc centered at $y$
in the leaves $\mathcal{F}^{u/s}_g(y)$. Here $\pi_1(g), \pi_2(g)$ are $C^1$ and continuous in $g$.

\begin{figure}[h]
\begin{center}
\begin{picture}(200,120)(-30,-12)

\put(-20,0){\line(1,0){50}}

\put(-20,0){\line(0,1){50}}

\put(-20,50){\line(1,0){50}}

\put(30,50){\line(0,-1){50}}

\put(-45,25){\line(1,0){200}}

\put(-45,22){\line(1,0){200}}

\put(-45,19){\line(1,0){200}}

\put(-45,16){\line(1,0){200}}

\put(-45,28){\line(1,0){200}}

\put(-45,31){\line(1,0){200}}

\put(-45,34){\line(1,0){200}}

{\color{red}

\put(5,-25){\line(0,1){100}}

\put(8,-25){\line(0,1){100}}

\put(11,-25){\line(0,1){100}}

\put(14,-25){\line(0,1){100}}

\put(2,-25){\line(0,1){100}}

\put(-1,-25){\line(0,1){100}}

\put(-4,-25){\line(0,1){100}}

\qbezier(5,75)(5,95)(35,100)

\qbezier(35,100)(90,105)(110,50)

\qbezier(110,50)(130,0)(150,50)

\qbezier(2,75)(2,95)(32,103)

\qbezier(32,103)(87,108)(112,53)

\qbezier(112,53)(130,3)(147,53)

\qbezier(8,75)(8,95)(32,97)

\qbezier(32,97)(93,102)(108,47)

\qbezier(108,47)(130,-3)(153,47)

}

{\color{blue}\put(127,0){\line(0,1){50}}}

\put(50,10){\makebox(0,0){\tiny $\mathcal{F}^s_g(U)$}}

\put(-25,65){\makebox(0,0){\tiny $\mathcal{F}^u_g(U)$}}

 \put(130,60){\makebox(0,0){\tiny $l(g)$}}

$$\, $$

\end{picture}
$$\, $$
\end{center}
\caption{Interval of tangencies}
\end{figure}
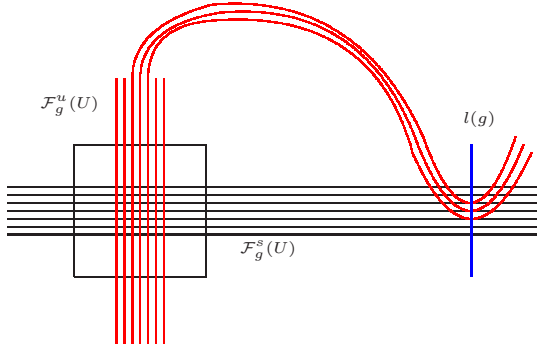

Observing that $\Lambda$ is uniformly hyperbolic, there exists
$L_0>0$ and $\lambda\in (0,1)$, such that
\begin{eqnarray*}d(g^n(x_1),g^n(x_2))\leq\lambda^nd(x_1,x_2), \quad \text{for all}\,\,n\geq
0,\,\,\forall\,x_1,x_2\in W^s_{L_0}(x),\,\,x\in \Lambda(g),\\[2mm]
d(g^{-n}(x_1),g^{-n}(x_2))\leq\lambda^nd(x_1,x_2), \quad \text{for
all}\,\,n\geq 0,\,\,\forall\,x_1,x_2\in W^u_{L_0}(x),\,\,x\in
\Lambda(g).
\end{eqnarray*}

 Since $L$ is fixed, we can take $N\in \mathbb{N}$ and $a_0>0$ such that for any
 $\delta\in (0,L+\delta_1)$,
\begin{itemize}\item $\diam^s(g^n(B^s(x,\delta)))<a_0\delta g^{N}(B^s(x,\delta))
\subset W^s_{L_0}(g^N(y))$, for all $0\leq n\leq N$, for all $x$ in $W^s_{L+\delta_1}(y)$ and for all $y\in \Lambda(g)$,
\item $\diam^u(g^{-n}(B^u(x,\delta)))<a_0\delta \text{ and } g^{-N}(B^u(x,\delta)) \subset W^u_{L_0}(g^{-N}(y))$,
 for all $0\leq n\leq N$, for all $x$  in $W^u_{L+\delta_1}(y)$ and for all $y\in \Lambda(g)$,
\end{itemize}
where $B^{s/u}(z,\delta)$ are  the balls in $W^{s/u}(y)$ centered at
$z$ with radius $\delta$; $\diam^{s/u}$ are the diameters along
$s/u$-leaves. Consequently,  we have
\begin{eqnarray*} \diam^s(g^n(B^s(x,\delta)))<a_0\delta, \quad \text{for all}\,\, n\geq0,\,\,\forall\,x\in W^s_{L+\delta_1}(y),\,\,y\in \Lambda(g),\\[2mm]
\diam^u(g^{-n}(B^u(x,\delta)))<a_0\delta , \quad \text{for
all}\,\,n\geq 0,\,\,\forall\,x\in W^u_{L+\delta_1}(y),\,\,y\in
\Lambda(g).
\end{eqnarray*}

 We give $l(g)$ an orientation so that we can say
up-side and below-side in $l(g)$.  Without loss of generality, we
suppose the leaves of $\mathcal{F}^s_g$ near $l(g)$ are horizontal.
Noting that the tangency $x_0$ is quadratic, we can see all leaves
of $\mathcal{F}^u$ bent upwardly nearby $l(g)$. Thus, there is
$a_1>0$ such that for any $z_1\in l(g)$ and $z_2\in l(g)$ below
$z_1$, the nearby two intersections of $\mathcal{F}^s(z_1)$ and
$\mathcal{F}^u(z_2)$ are contained in a ball with radius
$a_1\sqrt{d(z_1,z_2)}$.

\begin{figure}[h]
\begin{center}
\begin{picture}(200,120)(-30,-12)

\put(-50,0){\line(1,0){100}}

\spline(-50,70)(0,-26)(50,70)

\spline(-50,65)(0,-30)(50,65)

\put(0,-30){\line(0,1){80}}

\put(0,0){\circle*{2}}

\put(0,-4){\circle*{2}}

\put(0,0){\circle{40}}

\put(35,70){\makebox(0,0){\tiny $\mathcal{F}^u(z_1)$}}

\put(65,60){\makebox(0,0){\tiny $\mathcal{F}^u(z_2)$}}

\put(70,0){\makebox(0,0){\tiny $\mathcal{F}^s(z_1)$}}

\put(20,-25){\makebox(0,0){\small $B_{\delta}(z_1)$}}

\put(5,5){\makebox(0,0){\tiny $z_1$}}

\put(5,-8){\makebox(0,0){\tiny $z_2$}}

\put(5,60){\makebox(0,0){\small $l(g)$}}

$$\, $$

\end{picture}
$$\, $$
\end{center}
\caption{The size of the transversal intersection}
\end{figure}
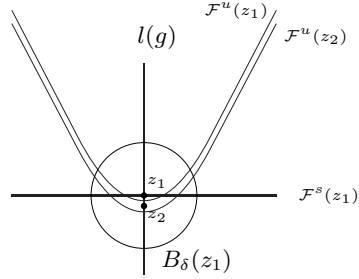

Let $K_1(g)$, $K_2(g)$ be small compact one side neighborhoods of
$p(g)$ in $W^s(p(g))\cap \Lambda(g)$ and $W^u(p(g))\cap \Lambda(g)$,
depending continuously on $g$, and such that
$$\tau(K_1(g))=\tau^u(\Lambda(g)),\quad
\tau(K_2(g))=\tau^s(\Lambda(g)).$$ Define
$$l_i(g)=(\pi_i(g))(K_i(g)),\quad i=1,2.$$
We can take $K_1(g)$ and $K_2(g)$ small so that
$$\frac{\|D_x\pi_i(g)\|}{\|D_y\pi_i(g)\|}\,\,\text{close to}\,\,1,\quad \text{for}\,\,x,y\in K_i,\,\,i=1,2,$$
which implies that
$$\tau(l_i(g)) \,\,\text{close to }\,\,\tau(K_i(g)),\quad \text{for}\,\,i=1,2.$$
Hence,  together with $\tau(K_1(g))\cdot \tau(K_2(g))>1$,  we have
$$\tau(l_1(g))\cdot \tau(l_2(g))>1.$$

For two Cantor sets $Y_1, Y_2$, let $I_1=[s_1,s_2], I_2=[t_1,t_2]$
be minimal closed intervals such that $I_1\supseteq Y_1,
I_2\supseteq Y_2$.  We say $Y_1, Y_2$ are linked if $I_1, I_2$ are
linked, i.e., $s_1<t_1<s_2<t_2$ or $t_1<s_1<t_2<s_2$.  Since
$l_1(f)$ and $l_2(f)$ has a boundary point in common, so taking a
small perturbation, there exists a $C^2$ open set
$\mathcal{N}\subset \mathcal{N}_2$ such that $l_1(g)$ and $l_2(g)$
are linked and $\tau(l_1(g))\cdot\tau(l_2(g))>1$, $\forall\,g\in
\mathcal{N}$. By Lemma \ref{gap lemma}, the third case of Lemma
\ref{gap lemma} is satisfied, which implies the existence of a
tangency $z_0\in l(g)$ of $\mathcal{F}^u_{g}(x_0,L+\delta_1)$ and
$\mathcal{F}^s_{g}(y_0,L+\delta_1)$ for some $x_0\in K_1(g)$,  $y_0\in
K_2(g)$. Moreover, one of the following two cases happens:

\begin{itemize} \item[(i)] there exist $u_i\in l_1(g)$ below $z_0$ with $u_i\to z_0$ as $i\to
+\infty$;\\
\item[(ii)] there exist $v_i\in l_2(g)$ above  $z_0$ with $v_i\to z_0$ as
$i\to+
\infty$.
\end{itemize}
Otherwise, $z_0$ is  a boundary point of both $l_1(g)$ and $l_2(g)$,
contradicting that $\Int(F_j\cap G_j)\neq \emptyset$, $\forall\,
j\in \mathbb{N}$, where $\{F_j\}, \{G_j\}$ are defining sequences of
$l_1(g), l_2(g)$ with $\tau(\{F_j\})\tau(\{G_j\})>1$.

\begin{Lem}\label{small intersection}Given $\delta>0$, there are $x_i\in K_i$, $i=1,2$,  such that $W^u_{L+\delta_1}(x_1)$
intersects $W^s_{L+\delta_1}(x_2)$ transversally in a
$\delta$-neighborhood of  $z_0\in l(g)$ and, the two nearby
intersections are  contained in $B(z_0,\delta)$.
\end{Lem}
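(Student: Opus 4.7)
The plan is to exploit the dichotomy (i)--(ii) recorded just before the lemma to produce $x_1\in K_1(g)$ and $x_2\in K_2(g)$ whose images under $\pi_1(g)$ and $\pi_2(g)$ both lie on $l(g)$ arbitrarily close to $z_0$, but with $\pi_2(g)(x_2)$ strictly above $\pi_1(g)(x_1)$ on the oriented line $l(g)$. Once this is arranged, the quadratic tangency structure along $l(g)$ --- namely that each leaf of $\mathcal{F}^u_g$ through a point of $l(g)$ is tangent from above (in the constant $a_1$ sense recalled in the setup) to the horizontal leaves of $\mathcal{F}^s_g$ --- forces the leaves $\mathcal{F}^u_g(x_1)$ and $\mathcal{F}^s_g(x_2)$ to meet at exactly two nearby points contained in $B(z_0,a_1\sqrt{d(\pi_1(x_1),\pi_2(x_2))})$, which is the two-point transverse intersection we want.

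Concretely, in case (i), given $\delta>0$ choose $u\in l_1(g)$ below $z_0$ with $d(u,z_0)<(\delta/a_1)^2$; this is possible by the very assumption in case (i). By surjectivity of $\pi_1(g)|_{K_1(g)}\to l_1(g)$, pick $x_1\in K_1(g)$ with $\pi_1(g)(x_1)=u$, and set $x_2:=y_0\in K_2(g)$, so that $\pi_2(g)(x_2)=z_0$ sits above $u$ on $l(g)$. By construction the leaf $\mathcal{F}^u_g(x_1)$ meets $l(g)$ at $u$ and the leaf $\mathcal{F}^s_g(x_2)$ meets $l(g)$ at $z_0$; the $a_1$-estimate then yields two intersection points of these leaves inside $B(z_0,\delta)$. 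Transversality at these points is automatic, since the curve of tangencies between the two foliations is exactly $l(g)$, and the two intersection points lie off $l(g)$ by the upward-bending estimate. Case (ii) is entirely symmetric: pick $v\in l_2(g)$ above $z_0$ with $d(v,z_0)<(\delta/a_1)^2$, choose $x_2\in K_2(g)$ with $\pi_2(g)(x_2)=v$, and take $x_1:=x_0\in K_1(g)$.

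The only point that needs checking, and the only mild obstacle, is that these two transverse intersection points, which a priori live only on the full leaves $\mathcal{F}^u_g(x_1)$ and $\mathcal{F}^s_g(x_2)$ used to define $\pi_1(g),\pi_2(g)$ with radius $L+\delta_2$, in fact lie in $W^u_{L+\delta_1}(x_1)\cap W^s_{L+\delta_1}(x_2)$ as demanded by the statement. This is handled by first shrinking $\delta$ so that $\delta<\delta_2-\delta_1$; then the two intersection points are within distance $\delta$ of $u$ (resp.\ $z_0$) along $\mathcal{F}^u_g(x_1)$ (resp.\ $\mathcal{F}^s_g(x_2)$), hence within distance $L+\delta_2<L+\delta_1$\footnote{More precisely, within $L+\delta_1+\delta<L+\delta_2$ along the leaf from $x_1$; one may absorb the extra $\delta$ into $\delta_1$ by relabeling.} along the respective leaves from $x_1$ and $x_2$. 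The conclusion then follows.
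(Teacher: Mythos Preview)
Your argument is correct and follows essentially the same route as the paper's proof: split according to the dichotomy (i)/(ii), in case (i) choose $u\in l_1(g)$ below $z_0$ with $d(u,z_0)<(a_1^{-1}\delta)^2$, take $x_1\in K_1(g)$ with $\pi_1(g)(x_1)=u$ and $x_2=y_0$, then invoke the $a_1$-estimate; case (ii) is symmetric with $x_1=x_0$. Your added remarks on why the two intersections are transverse (they lie off the tangency line $l(g)$) and on the leaf-radius bookkeeping are welcome, though note the evident slip ``$L+\delta_2<L+\delta_1$'' in your final paragraph --- as your footnote indicates, the correct reading is that the intersection points lie within $L+\delta_1+\delta$ of $x_i$ along the leaf, and one simply chooses $\delta_1,\delta_2$ and $\delta$ so that this stays below the required radius.
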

\begin{proof}   We have assumed that $\mathcal{F}^u(z_0)$ stays
on the up-side of the horizontal $\mathcal{F}^s(z_0)$ in a small
neighborhood of $z_0$.

 Corresponding to (i), there is $u_i\in l_1$ on the below-side of
$z_0$ with $d(u_i, z_0)<(a_1^{-1}\delta)^2$, then we can take
$x_1\in K_1$ such that $u_i\in W_{L+\delta_1}^u(x_1)\cap l(g)$ and,
$W^u_{L+\delta_1}(x_1)$ transversally intersects
$W_{L+\delta_1}^s(y_0)$ in a $\delta$-neighborhood of some
$\widetilde{z}\in l(g)$; Let $x_2=y_0$.

Corresponding to (ii), there is $v_i\in l_2$ on the above-side of
$z_0$ with $d(v_i, z_0)<(a_1^{-1}\delta)^2$. The argument is
similar by taking $x_1=x_0$.
\end{proof}

Given $\delta>0$, let $x_1$, $x_2$ as in Lemma \ref{small
intersection}, and  $z_1\in W^u_{L+\delta_1}(x_1)\cap l(g)$, $z_2\in
W^s_{L+\delta_1}(x_2)\cap l(g)$, $d(z_1,z_2)<(a_1^{-1}\delta)^2$.
Since $g$ is $C^2$, the two maps
\begin{eqnarray*}x\in \Lambda(g) &\to&  x^u\in W^u_{L+\delta_1}(x)\cap l(g),\\[2mm]
x\in \Lambda(g) &\to&  x^s\in W^{s}_{L+\delta_1}(x)\cap l(g)
\end{eqnarray*}
are $C^1$ smooth and well defined in a neighborhood of  $x_1$ and a
neighborhood of $x_2$, respectively. We can take $a_2>0$ as the
Lipschitz constant for the above two maps. Applying Lemma \ref{bound
period} for $\vep=a_2^{-1}d(z_1,z_2)/3$, we can find a periodic
point $q\in \Lambda(g)$ satisfies \begin{itemize}\item $\tau(q)\in
[2|\log(a_2^{-1}d(z_1,z_2)/3)|/|\log\lambda|,\,\,
9|\log(a_2^{-1}d(z_1,z_2)/3)|/|\log\lambda|]$ \\
\item
$d(q,x_1)\leq a_2^{-1}d(z_1,z_2)/3,\quad d(g^{i_0}q,x_2)\leq
a_2^{-1}d(z_1,z_2)/3\quad \text{for some}\,\,i_0\in (0,\tau(q)).$
\end{itemize}
Furthermore,
$$d(q^u,z_1)\leq d(z_1,z_2)/3,\quad d((g^{i_0}q)^s,z_2)\leq d(z_1,z_2)/3.$$
Hence, $W^u_{L+\delta_1}(q)$ transversally intersects
$W^s_{L+\delta_1}(g^{i_0}q)$ at two points $y_1,y_2$ with
$d(y_1,y_2)\leq a_1\sqrt{\frac{5}{3}d(z_1,z_2)}$.

 We choose a  rectangle centered at the origin
$O:=(f^{i}(q))^s$ as follows
$$L_{z_1,z_2}=\big{\{}(e_1,e_2)\mid |e_1|_s\leq a_1\sqrt{\frac{5}{3}d(z_1,z_2)},\,|e_2|_u\leq \frac{d(z_1,z_2)}{10}\big{\}}.$$
where $|\cdot|_s$, $|\cdot|_u$ denote  the distances in the
horizontal axis ($s$-direction) and  the vertical axis,
respectively.

By iterations, $g^n(L_{z_1,z_2})$ will  become longer along
$u$-foliation, and narrower along $s$-foliation. Observe that for
$d(z_1,z_2)$ sufficiently small, $\tau(q)$ will be large enough. In order to make $g^{k\tau(q)+i_0}(L_{z_1,z_2})$ as
$u$-foliation intersect $L_{z_1,z_2}$ as $s$-foliation
transversally near $O$, we  take $k$ such that  the
length of the $u$-leaves of $ g^{k\tau(q)}(L_{z_1,z_2})$ is at least
$L+\delta_1$, i.e.,
$$\frac{d(z_1,z_2)}{10}\lambda^{-\tau(q)k}\geq  L+\delta_1. $$
  So,
$$k\leq \frac{\log\frac{d(z_1,z_2)}{10}-\log(L+\delta_1)}{\tau(q)\log\lambda}\leq
\frac{\log\frac{d(z_1,z_2)}{10}-\log(L+\delta_1)}{2|\log(a_2^{-1}d(z_1,z_2)/3)|/|\log\lambda|\cdot\log\lambda}.
$$
We can take a constant $T_1\in \mathbb{N}$ independent  of
$d(z_1,z_2)$, such that $k\leq T_1$.  For
$t=T_1\tau(q)+i_0\in[T_1\tau(q),(T_1+1)\tau(q)]$, $g^t(L_{z_1,z_2})$
will intersect $L_{z_1,z_2}$ transversally near $O$. Here we need to
further cut the unnecessary parts outside the foliation
$\mathcal{F}^u$. This is equivalent to take  a sub-rectangle
$L_{z_1,z_2}'\subset L_{z_1,z_2}$  with the height in the vertical
direction
 of  $L_{z_1,z_2}$ smaller but no change on the length in the horizontal direction. To make $g^t(L'_{z_1,z_2})$ also intersect $L'_{z_1,z_2}$
transversally near $O$, it is sufficient to let  the length of
$u$-leaves of $g^t(L'_{z_1,z_2})$  be $C_0\sqrt{d(z_1,z_2)}$ for
some constant $C_0$ independent of $d(z_1,z_2)$.
 Therefore,
  \begin{eqnarray*}\diam^s(g^n(L_{z_1,z_2}'))&\leq& a_0\cdot\diam^s(L_{z_1,z_2}')\leq a_0\cdot\sqrt{\frac{5}{3}a_1d(z_1,z_2)},\,\,0\leq n\leq
  t;\\[2mm]
\diam^u(g^n(L_{z_1,z_2}'))&=& \diam^u(g^{-(t-n)}\circ
g^t(L_{z_1,z_2}'))\\[2mm]&\leq& a_0 \diam^u(
g^t(L_{z_1,z_2}'))\leq a_0\cdot C_0\sqrt{d(z_1,z_2)},\,\, 0\leq
n\leq t.
  \end{eqnarray*}
Therefore, $ \diam(g^n(L_{z_1,z_2}'))\leq C_1 \sqrt{d(z_1,z_2)}$,
$0\leq n\leq t$,  for some constant $C_1$ independent of
$d(z_1,z_2)$.

\begin{figure}[h]
\begin{center}
\begin{picture}(200,120)(-30,-12)









\multiput(100,30)(2,0){28}{\line(1,0){0.5}}

\put(-45,35){\line(1,0){200}}

\put(100,38){\line(1,0){55}}

\multiput(100,43)(2,0){28}{\line(1,0){0.5}}

\multiput(100,30)(0,2){7}{\line(0,1){0.5}}

\multiput(155,30)(0,2){7}{\line(0,1){0.5}}

 {\color{red}

\put(5,-25){\line(0,1){100}}

\put(8,-25){\line(0,1){100}}





\qbezier(5,75)(5,95)(35,100)

\qbezier(35,100)(90,105)(110,50)

\qbezier(110,50)(130,0)(150,50)

\qbezier(2,75)(2,95)(32,103)

\qbezier(32,103)(87,108)(112,53)

\qbezier(112,53)(130,3)(147,53)

\qbezier(8,75)(8,95)(32,97)

\qbezier(32,97)(93,102)(108,47)

\qbezier(108,47)(130,-3)(153,47)

}

\put(2,-25){\line(0,1){100}  }

{\color{blue}\put(130,0){\line(0,1){50}}}



 \put(130,60){\makebox(0,0){\tiny $l(g)$}}

  \put(170,37){\makebox(0,0){\tiny $L'_{z_1,z_2}$}}

  \put(172,52){\makebox(0,0){\tiny $g^t(L'_{z_1,z_2})$}}

\put(62,85){\makebox(0,0){\tiny $g^t(L_{z_1,z_2})$}}

  \put(160,24){\makebox(0,0){\tiny $L_{z_1,z_2}$}}

 \put(153,47){\line(-1,1){3}}

 \put(108,47){\line(1,1){3}}

$$\, $$

\end{picture}
$$\, $$
\end{center}
\caption{Transversal intersections}
\end{figure}
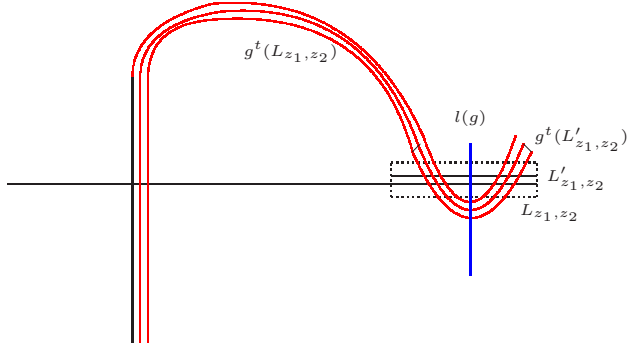

Let   $$\Gamma_g(z_1,z_2):=\bigcap_{n\in \mathbb{Z}}
g^n(L_{z_1,z_2}').$$ Then $\Gamma_g(z_1,z_2)$ is a periodic
hyperbolic  basic set with period $t$ and with diameter no more than
$C_1\sqrt{d(z_1,z_2)}$. Let $\eta=C_1\sqrt{d(z_1,z_2)}$. Since
$L'_{z_1,z_2}\cap g^t(L'_{z_1,z_2})$  contains two strips,
\begin{eqnarray*} h(g^t, \Gamma_g(z_1,z_2))\geq \log2,\end{eqnarray*}
which implies for the maximal entropy measure $\mu$ supporting on
$\Gamma_g(z_1,z_2)$,
\begin{eqnarray*}h_{\loc}(g,\mu,\eta)&\geq& h(g, \Gamma_g(z_1,z_2)) \geq
\frac{h(g^t, \Gamma_g(z_1,z_2))}{t}\\[2mm]&\geq&   \frac{\log2}{(T_1+1)\tau(q)}\geq
\frac{\log2}{9(T_1+1)|\log(a_2^{-1}d(z_1,z_2)/3)|/|\log\lambda|}\\[2mm]
&\geq& \frac{C_2}{|\log \eta|}\end{eqnarray*} for some constant
$C_2$ independent of $\eta$.  Hence, $h_{\loc}(g,\eta)\geq C_2/|\log
\eta|$.  Note that $a_i,C_i$, $i=0,1,2$, can be chosen uniformly for
$g\in \mathcal{N}$. The proof of Theorem \ref{non EE} is completed.
\hfill $\Box$
\begin{Rem}
The presence of horseshoes for surface diffeomorphisms with
homoclinic tangencies has previously been studied in a qualitative
way by Homburg and Weiss, see in \cite{Hom-Weiss}.
\end{Rem}

\section{$h$-expansiveness for endomorphisms on compact  homogenous Riemmanian manifold}

We prove here Theorem  \ref{homogenous}. Let $G/\Lambda$ be a compact  homogenous Riemannian manifold of dimension $m$.
There exists $\vep>0$ such that the map $\pi: G\rightarrow G/\Lambda$ given by $\pi(g):=g\Lambda$ for any $g\in G$ is a $\vep$-local isometry, i.e. the restrication of $\pi$ to any $\vep$-ball of $G$ is an isometry (see e.g. \cite{Bow71}). We  may also assume that the exponential map $\exp:\mathfrak{g}\rightarrow  G$ of the Lie group $G$ with Lie algebra $\mathfrak{g}$ is a diffeomorphism between the $\vep$-ball at $0\in \mathfrak{g}$ and the $\vep$-ball at the unity $e\in G$. Let $L>1$ be a Lipschitz constant of the diffeomorphism $\psi:=\pi\circ \exp |_{B(0_\mathfrak{g},\vep)}$.


We consider an endomorphism $\phi$ of $G/\Lambda$. Recall that $\phi$ is given  by $\underline{g}\in G$ and a morphism of group $\Phi:G\rightarrow G $ as follows $\phi(g\Lambda)=\underline{g}\Phi(g)\Lambda$ for any $g\Lambda\in G/\Lambda$.

 Assume firstly $\underline{g}=e$, then $\phi\circ \pi=\pi\circ \Phi$. We have also the following property of commutativity
$\Phi\circ \exp Y=\exp \circ d\Phi (Y)$ for any $Y\in \mathfrak{g}$ and thus $$\phi\circ\psi =\psi\circ d\Phi.$$
Fix $\delta>0$. We consider in $T_{e}G\simeq \mathfrak{g}$ the hyperplans defined by $\{x_i=+/- k\delta/L\sqrt{m}\}$ for some orthonormal system of coordinates $(x_1,...,x_m)$ and for $k$ being an integer with  $0\leq k\delta/L\sqrt{m}<\vep$. These hyperplans separate any two points with distance larger than $\delta/L$ in the ball of radius $\vep$ in $\mathfrak{g}$. In particular if $\mathfrak{g}'$ denotes the complementary set of these hyperplanes in $\mathfrak{g}$ any connected component of $\mathfrak{g}'$ intersected the $\varepsilon$-ball is contained in a $\delta/L$-ball.

Now for any integer $n$ the set $\bigcap_{0\leq
k<n}d\Phi^{-k}\mathfrak{g}'$ has a polynomial number in $n$ of
connected components : indeed $N$ hyperplanes separates
$\mathbb{R}^m$ in at most $CN^m$ components for some constants $C$
depending only on $m$. If such a connected component intersects the
Bowen ball for $d\Phi$ of size $\vep$ and length $n$ at
$0_\mathfrak{g}$,

$$B_n(d\Phi,0_\mathfrak{g},\vep):=\{X\in \mathfrak{g}, \ \|d\Phi^k(X)\|<\vep, \ 0\leq k<n  \},$$
then it is contained in a $(\delta/L,n)$-Bowen ball for $d\Phi$.
This argument applies to any linear map on an Euclidean space and is
used for example to study the entropy of (piecewise) affine maps
(see e.g. \cite{Buzzz}).

 As $\psi |_{B(0_{\mathfrak{g}},\vep)}$ is a diffeomorphism with Lipschitz constant less than or equal to $L$ restricted to the ball of radius  $\vep$ and by the relation $\phi\circ\psi =\psi\circ d\Phi$ we may cover the Bowen ball of radius $\vep$ at $e\Lambda$ for $\phi$ by a polynomial number of such Bowen balls of  arbitrarily small radius $\delta>0$. It follows that
$h_{top}(\phi,B_{\infty}(e\Lambda,\vep))=0$. As the metric is left invariant  it holds also in fact for  Bowen balls at any $g\Lambda\in G/\Lambda$ and therefore we conclude that

$$h^*(\phi,\vep)=0.$$

The general case $\underline{g}\neq e$ follows also immediately from the left invariance of the metric on $G/\Lambda$.

\section{$C^\infty$ examples with arbitrarily  slow convergence}
\subsection{Proof of Theorem \ref{thm2}}

Let $T:M\rightarrow M$ be a $C^\infty$ diffeomorphism with an
interval $I$ of homoclinic tangencies for some hyperbolic periodic
point $p$. At most taking some iteration we can suppose $p$ is a
fixed point of $T$.  We denote by $m$
 the dimension of $M$ and by $m_u$ and $m_s$ the dimensions of the unstable and stable manifolds tangent at $I$.
Assume that in a local chart $U\supset I$ the interval $I$ may be
written as $I=[0,1]$ and $U\supset [-3,3]^m$. For any
positive real function  $a:(0,1)\to \mathbb{R}^+$ with $\lim_{\vep\rightarrow 0} a(\vep)=0$ we will
construct a $C^\infty$ map $f_a:[-1,1]\rightarrow [0,1]$ such that if
$\theta_a$ is a $C^\infty$ diffeomorphism  of $M$ satisfying in local coordinates
\begin{itemize}\item $\theta_a=Id$ outside $[-2,2]^m$ ;\\[2mm]
 \item $\theta_a(x,y)=(x,y_1+f_a(x_1),y_2, \cdots, y_{m_u})$ for $(x,y)\in
[-1,1]^{m_s}\times [-1,1]^{m_u}$, \end{itemize}  then the
diffeomorphism $F_a:=\theta_a\circ T$ satisfies
$$h_{\loc}(F_a,\vep)\geq a(\vep)$$ for all $0<\vep\leq \zeta(f_a)$
with some constant $\zeta(f_a)>0$. The map $f_a$ is chosen to be
$C^\infty$ flat at $0$ so that $F_a$ has a homoclinic tangency of
infinite order at $(0,0)$. Moreover since $(x,y)\mapsto
(x,y_1+f_a(x_1),y_2, \cdots,y_{m_u})$ is volume preserving we may
choose $\theta_a$ be also volume preserving by the pasting Lemma of
Arbieto and Matheus (see Lemma 3.9 of \cite{past}).
 Also
$\theta_a$ is $C^\infty$ close to the identity when $f_a$ is
$C^\infty$ close to zero.

The idea introduced by Misiurewicz in \cite{Mis73} and developed
later by Downarowicz-Newhouse \cite{DN05} and Buzzi \cite{Buz13}  and
in other recent works \cite{burguet},\cite{cat},\cite{arbcie}
consists in creating arbitrarily  small horseshoes accumulating at
the fixed point $p$ by choosing the graph of $f$ looking like small
snakes closer and closer to the tangency.

We describe now the main properties of the map $f_a$.  Let $\chi$ be a non
negative $C^\infty$ bump function, such that $\chi(t)=1$ if $0\leq t\leq
1$ and  $\chi(t)=0$ if $t> 2$ or $t<-1$.  We  produce
snakes only on the intervals of the form
$[c_n,d_n]:=\left[\frac{1}{4n+1},\frac{1}{4n}\right]$ for all integers $n$.
More precisely we put with $R_{a,n}>M_{a,n}>0$ and $N_{a,n}\in \NN$ (which we precise later on),
$$f_a=\sum_nf_n, \ \textrm{with } f_n:=\chi\left(\frac{x-c_n}{d_n-c_n}\right)\left(R_{a,n}+M_{a,n}\sin\left(N_{a,n}\frac{x-c_n}{d_n-c_n}\right)\right).$$

This sum is zero on $\mathbb{R}^-$ and it defines a $C^\infty$
function on $\mathbb{R}^+\setminus \{0\}$ as the terms of the sum
are $C^\infty$ function with  disjoint compact supports accumulating
only at $0$. We let $\vep=\vep_n:=d_n-c_n=\frac{1}{4n(4n+1)}$ and we
denote $R_{a,\vep}:=R_{a,n}$, $M_{a,\vep}:=M_{a,n}$,
$N_{a,\vep}:=N_{a,n}$ and $f_\vep:=f_n$ for the integer $n=n_\vep$
giving $\vep$. We may choose $R_{a,\vep}$ and $M_{a,\vep}$
 so that any
branch of the sinusoidal in the graph of $f_a$ crosses all the
branches after a time $P_{a,\vep}$ with
$$M_{a,\vep}e^{\lambda_u(p)P_{a,\vep}}=\vep,$$
$$R_{a,\vep}e^{\lambda_u(p)P_{a,\vep}}\leq C,$$ where $\lambda_u(p)>0$ is the minimum of absolute values of all Lyapunov
exponents  of $T$ at $p$ and $C=C(T)$ depends only on $T$.

We consider a  rectangle $L_{a,\vep}$ as in the proof of Theorem
\ref{non EE}, Page 36. Here the intersection of $L_{a,\vep}$ with
$F_a^{P_{a,\vep}}L_{a,\vep}$ consists in $N_{a,\vep}$ strips so that
the entropy of the associated horseshoe $H_{a,\vep}$ is given by
\begin{eqnarray*}
h(H_{a,\vep})=\frac{\log N_{a,\vep}}{P_{a,\vep}}.
\end{eqnarray*}

Note also that  $H_{a,\vep}$ is contained in an infinite dynamical
ball of size $\vep$. We explain now how to choose  $P_{a,\vep}$ and
$N_{a,\vep}$ with respect to $\vep$ and $a$. Firstly we can take
$P_{a,\vep}=-\log \vep/a(\vep)$ and then $N_{a,\vep}$ as follows
$$N_{a,\vep}:=\lceil e^{P_{a,\vep}a(\vep)}\rceil=\lceil 1/\vep\rceil.$$
It will imply by considering  a measure of maximal entropy of $H_{a,\vep}$ that

$$h_{\loc}(F_a,\vep)\geq \frac{\log N_{a,\vep}}{P_{a,\vep}} \geq a(\vep).$$

The only thing we
need to check is that the resulting map may be $C^\infty$ extended at $0$.
 It is enough to prove that
$\|f_\vep\|_r$ goes to zero when $\vep$ goes to zero for any given
integer $r$. Fix $r\geq 1$. We have for some  constant $C_r=C(r,T)$
\begin{eqnarray*}
\|f_\vep\|_r& \leq  &C_r
M_{a,\vep}\left(N_{a,\vep}/\vep\right)^{r}\\[2mm]
&\leq & C_r\vep
e^{-\lambda_u(p)P_{a,\vep}}\left(e^{a(\vep)P_{a,\vep}}/\vep\right)^r\\[2mm]
&=& C_r(1/\vep)^{-\frac{\lambda_u(p)}{a(\vep)}+2r-1}.
\end{eqnarray*}
This last term goes to zero when $\vep $ goes to zero because
$\lambda_u(p)>0$ and $\lim_{\vep\rightarrow 0}a(\vep)=0$. This
proves $F_a$ may be $C^\infty$ extended. Moreover,  when defining
$f_a$ we only consider the rest of the series from $N$, i.e.
$$f_a^N:=\sum_{n\geq N}f_n,$$ then the resulting diffeomorphisms
$(F_a^N)_N$ converge to $T$ in the $C^\infty$ topology when $N$ goes
to infinity. \hfill $\Box$

\begin{Rem}
The construction  may be easily adapted to interval maps to
produce examples with the same properties. One only needs to repeat the previous construction near
a flat homoclinic tangency at a hyperbolic repulsing fixed point of a $C^\infty$ interval map
(See for example \cite{burguet} and \cite{ruette} for similar constructions of horseshoes accumulating near a tangency).
\end{Rem}

\begin{Rem}\label{upper}
In the above proof the local entropy is produced by small
horseshoes, i.e. horseshoes included in some infinite
$\vep$-dynamical ball, which are persistent under $C^1$ (even $C^0$
for interval maps)
 small perturbation. In particular if  $f_a$ is as in the above example  there exists for any $\vep$ a polynomial map $P_\vep$
 with $h^*(P_\vep,2\vep)\geq a(\vep)$. That's why one can not expect to have a lower bound in $C/|\log\vep|$
 with $C$ independent from the degree in Theorem \ref{quasionedim} or Corollary \ref{polynomial}.
\end{Rem}

\begin{Rem}In \cite{DN05}, \cite{cat}, \cite{arbcie}, the persistence of homoclinic tangencies and of  small horseshoes allow
to use a Baire argument to build generic examples with no principal symbolic extensions, in particular non asymptotically $h$-expansive.
 Here we do
not know if Corollary \ref{thm22} holds for a $C^\infty$ generic
subset of Newhouse domains. Indeed we only are able to show that
$\{f:  \ h^*(f,\vep)\geq a(\vep)\}$ are $\tilde{a}(\vep)$-dense in
Newhouse domains for some function $\tilde{a}$ depending on $a$.
\end{Rem}

\subsection{Proof of Theorem \ref{exa}}
We will use the same construction as in the proof of Theorem \ref{thm2}.
For any positive $r$ we let $D_r\subset \mathbb{R}^2$ be the $2$-disks of radius $r$  at zero.
 We first consider an analytic flow $(F_t)_t$ of the plane with  a homoclinic orbit  $\Gamma\subset  D_{1/2}$  at
some hyperbolic fixed point $p$.  The stable/unstable manifolds of
$p$  are analytic (in fact it follows from the Irwin Method for
invariant manifolds \cite{Irwin} \cite{aba}  and the implicit
function theorem for ultradifferentiable maps in \cite{Kom} that the
stable/unstable manifolds at a hyperbolic fixed point of a smooth
system are in the same ultradifferentiable class  as the system).
For some analytic metric,  a piece $I$ of the homoclinic orbit and
$U$ a neighborhood of $I$ may be written
 through the exponential map as  $I=[0,1]\times \{0\}$ and $U$ as a neighborhood of $[-3,3]^2$. Note this interval $I$ is by construction
  an interval of homoclinic tangencies.
Then in any non quasianalytic $U$-ultradifferentiable class  one can find a bump function $\psi$
supported in $D_{3/4}$, $0\leq \psi\leq 1$ and with $\psi=1$ on $D_{2/3}$. Finally we consider
 the smooth system $T$ defined as the time $\psi$ map of the flow, i.e. $T(x)=F_{\psi(x)}(x)$
 for all $x\in D_1$. As $U$-ultradifferentiable maps are closed under composition \cite{bier} the
 diffeomorphism $T$ of $D_1$ may be chosen in any non quasi analytic class. Finally observe that
  $T$ coincides with  the identity in a neighborhood of the boundary of $D_1$ and that  $\Gamma$
  is also an homoclinic orbit for $T$ at the $T$- hyperbolic fixed point $p$. One can also choose $T$
   with $\|DT(p)\|=\|DT\|_\infty$ and $\|DT\|_\infty>1$ arbitrarily close to one (in the following we will
   take $T$ with $2BD\log \|DT\|<1$ for some constants $B$ and $D$ given later on).

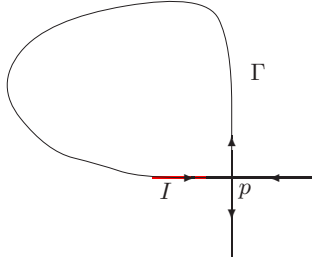
\begin{figure}[h]
\begin{center}
\begin{picture}(200,80)(-70,-12)

\put(-10,0){\line(1,0){40}}

{\color{red}\put(-30,0){\line(1,0){20}}}

\spline(0,0)(-35,0)(-50,5)(-70,10)(-90,35)(-70,60)(-10,70)(0,50)(0,5)(0,0)

\put(0,-30){\line(0,1){40}}

\put(0,-15){\vector(0,-1){}}

\put(0,15){\vector(0,1){}}

\put(-15, 0){\vector(1,0){}}

\put(15, 0){\vector(-1,0){}}

\put(5,-5){\makebox(0,0){\small $p$}}

\put(-25,-5){\makebox(0,0){\small $I$}}

\put(10,40){\makebox(0,0){\small $\Gamma$}}

$$\, $$

\end{picture}
$$\, $$
\end{center}
\caption{Interval $I$ of tangencies from a homoclinic orbit
$\Gamma$}
\end{figure}

 We fix some $\vep>0$ and we consider the map $F_a^\vep$ obtained as in the previous construction
 in the proof of Theorem \ref{thm2} but where we change $f_a$ by considering  only  the $n_{\vep}$-term of the series, i.e.
$$f_a:=f_{\vep}.$$  Moreover we may choose
$\theta_a$ in the local chart given by the exponential map by
$\theta_a(x,y)=(x,y+\chi(x)\chi(|y|)f_a(x))$ for $(x,y)\in
[-3,3]^2\subset U$ with $\chi$ a bump function as in the previous
proof. Then to make explicit computations we will take $T$ and
$\chi$ in the non quasi-analytic ultradifferentiable class
$(k^{2k})_k$, i.e. $T\in \Diff_U^{(k^{2k})_k}(D_1)$ and $\chi\in
U^{(k^{2k})_k}(\mathbb{R},\mathbb{R})$.

Let $\mathcal{M}=(M_k)_k$ be the weight defined  by
$$M_0=\|DT\|$$
and for all integers $k>0$
$$M_k:=M_0\left(\frac{1}{a^{-1}(\log\|DT\|/k)}\right)^k.$$

As  $1/a(e^{-.}):(0,+\infty)\to \mathbb{R}^+$ is a concave
nondecreasing function, its inverse $\log
\left(\frac{1}{a^{-1}(1/x)}\right)$ is convex  nondecreasing.  It
follows that $(\frac{\log(M_k/M_0)}{k})_k$ is convex nondecreasing,
which implies that $(\log M_k)_k$ is convex, i.e. $\cM$ is a
logarithmic convex weight.
  The condition $a(\vep)\geq \vep^{1/7}\geq 2BD\log\|DT\|\vep^{1/7}$ for all $\vep$ implies that $\left(M_k/M_0\right)^{\frac{1}{k}}=\frac{1}{a^{-1}(\log\|DT\|/k)}\geq (2BDk)^{7}$ for all $k$.  In particular $(\log (M_k/M_0)/k)_k$ is
not bounded.   We also consider the weight $\tilde{\mathcal{M}}=(\tilde{M}_k)_k$  defined for
all integers $k$ by
$$\tilde{M}_k:=(2BDk)^{7k}M_k^2/M_0.$$
The weight $\tilde{\mathcal{M}}$ is also clearly  logarithmic convex. Observe also that
$\tilde{M}_k/\tilde{M}_0\leq (M_k/M_0)^3$.

We have for all $x>0$ and for all $0<\vep<1$

\begin{eqnarray*}
G_{\tilde{\mathcal{M}}}(x)&\geq &G_{\mathcal{M}}(x/3)\\[2mm]
&\geq &
\frac{\log\|DT\|}{a(1/e^{x/3})},\\[2mm]
G_{\tilde{\mathcal{M}}}(3|\log\vep|)&\geq
& \frac{\log\|DT\|}{a(\vep)}.
\end{eqnarray*}

We check now for any $\vep>0$ that $F_a^\vep$ belongs to  $C_V^{\tilde{\mathcal{M}}}(D_1)$.
 Let us  compute $\|F_a^{\vep}\|_{r}$ for any $r$. We put $r_{\vep}:=
\lambda_u(p)/a(\vep)$. By applying Faa di Bruno formula for
 $F_a^{\vep}=T\circ \theta_a^{\vep}$ as in the proof of Theorem \ref{thm2} we have for any
$r\in\NN$:
\begin{eqnarray*}
\|F_a^{\vep}\|_{r} &\leq & B_{r}\|T\|_{r} \max_{\sum i j_{i}=r} \|\theta_a^\vep\|_{i}^{j_i}.
\end{eqnarray*}
Now by derivation of a product we have
$$\|\theta_a^{\vep}\|_i\leq 2^i\|\chi\|_i \|f_{\vep}\|_i.$$
As  $\chi$ is in  $V^{(B^kk^{2k})_k}$ for some constant $B\geq 1$ we have with $C=C(T)$

\begin{eqnarray*}
 \|f_\vep\|_i&\leq & C2^iM_{a,\vep}\|\chi\|_i
 (N_{a,\vep}/\vep)^{i}\\[2mm]
&\leq & C(2B)^ii^{2i}M_{a,\vep}(N_{a,\vep}/\vep)^{i}\\[2mm]
\|\theta_a^{\vep}\|_i&\leq
&C(2Bi)^{4i}M_{a,\vep}(N_{a,\vep}/\vep)^{i}
\end{eqnarray*} and then, as $T$ is in $C^{(k^{2k})_k}$, we have for some constant
$D=D(T)\geq 1$,
\begin{eqnarray*}
\|F_a^{\vep}\|_{r} &\leq & CD^rr^{3r} \max_{\sum ij_i=r}
\|\theta_a^\vep\|_{i}^{j_i}\\[2mm]
&\leq & C(2BDr)^{7r} M_{a,\vep}(N_{a,\vep}/\vep)^{r}\\[2mm]
&\leq &
C(2BDr)^{7r}(1/\vep)^{-\frac{\lambda_u(p)}{a(\vep)}+2r-1}\\[2mm]
&\leq & C(2BDr)^{7r} (1/\vep)^{-r_\vep+2r-1}.
\end{eqnarray*}
and thus for any $r$ and $\vep$ small enough ($C\vep \leq 1$) we get
\begin{eqnarray*}
\log \|F_a^\vep\|_{r}/r&\leq & 7\log(2BDr)+2\log (1/\vep)\\[2mm]
&\leq & 7\log(2BDr)+2\log (1/a^{-1}(\log\|DT\|/r_\vep))\\[2mm]
&\leq & 7\log(2BDr)+2\log (1/a^{-1}(\log\|DT\|/r))\\[2mm]
&\leq & 7\log(2BDr)+\frac{2\log  (M_r/M_0)}{r},
\end{eqnarray*}
that is,   $F_a^\vep\in C_V^{\tilde{\cM}}(D_1)$. Finally we have
again \begin{eqnarray*}
h_{\loc}(F_a^\vep)&\geq &h(H_{a,\vep})\\[2mm]
&\geq &a(\vep)\\[2mm]
&\geq&\frac{\log\|DT\|}{G_{\tilde{\mathcal{M}}}(3|\log \vep|)}.
\end{eqnarray*}

The previous construction  may be embedded  in any manifold $M$ of
dimension larger than two to get diffeomorphisms  $(f_a)^M$ and
$(f_a^\vep)^M$  on $M$ with the required properties, as usually done
by embedding  $D_1\times D_1^{m-2}$ in a given $m$-dimensional
manifold and by extending $(f_a)^{D_1}\times Id_{D_1^{m-2}}$ and
$(f_a^\vep)^{D_1}\times Id_{ D_1^{m-2}}$ by the identity outside
$D_1\times D_1^{m-2}$, where $D_1^{m-2}$ is the unit disk centered
at zero in $\mathbb{R}^{m-2}$. \hfill $\Box$

\end{document}